\def\II{\text{\rm II}}
\def\IV{\text{\rm IV}}
\DeclareSymbolFont{cyrletters}{OT2}{wncyr}{m}{n}
\DeclareMathSymbol{\Sha}{\mathalpha}{cyrletters}{"58}
\definecolor{refkey}{rgb}{1,1,1}
\definecolor{labelkey}{rgb}{1,1,1}
\definecolor{cite}{rgb}{0.9451,0.2706,0.4941}
\definecolor{ruri}{rgb}{0.0078,0.4022,0.8010}
\makeindex \setcounter{tocdepth}{1}
\def\F{{\rm \mathbb{F}}}
\def\Z{{\rm \mathbb{Z}}}
\def\N{{\rm \mathbb{N}}}
\def\Q{{\rm \mathbb{Q}}}
\def\G{{\rm \mathbb{G}}}
\def\Qbar{{\rm \overline{\mathbb{Q}}}}
\def\C{{\rm \mathbb{C}}}
\def\R{{\rm \mathbb{R}}}
\def\P{{\rm \mathbb{P}}}
\def\A{{\rm \mathbb{A}}}
\def\AJ{{\rm AJ}}
\def\Aut{{\rm Aut}}
\def\Pic{{\rm Pic}}
\def\Gr{{\rm Gr}}
\def\disc{{\rm disc}}
\def\Sym{{\rm Sym}}
\def\GL{{\rm GL}}
\def\Gr{{\rm Gr}}
\def\CH{{\rm CH}}
\DeclareMathOperator*{\ord}{ord}
\def\Hom{{\rm Hom}}
\def\End{{\rm End}}
\def\Spec{{\rm Spec}}
\def\char{{\rm char \hspace{1mm}}}
\numberwithin{equation}{section}
\newtheorem{theorem}{Theorem}[section]
\newtheorem{thm}{Theorem}[section]
\newtheorem{example}[theorem]{Example}
\newtheorem{lemma}[theorem]{Lemma}
\newtheorem{remark}[theorem]{Remark}
\newtheorem{corollary}[theorem]{Corollary}
\newtheorem{proposition}[theorem]{Proposition}
\DeclareMathOperator{\Jac}{Jac}
\newcommand{\calO}{\mathcal{O}}
\DeclareMathOperator{\HH}{H}
\begin{document}
\setlength{\parskip}{2pt} 
\setlength{\parindent}{8pt}
\title{Vanishing criteria for Ceresa cycles} 
\author{Jef Laga}
\address{Department of Pure Mathematics and Mathematical Statistics, Wilberforce Road, Cambridge, CB3 0WB, UK}
\email{jeflaga@hotmail.com}
\author{Ari Shnidman}
\address{Einstein Institute of Mathematics, Hebrew University of Jerusalem, Israel} 
\email{ari.shnidman@gmail.com}
\maketitle

\begin{abstract}
We prove cohomological vanishing criteria for the Ceresa cycle of a curve $C$ embedded in its Jacobian $J$. Namely:  
\begin{enumerate}[$(A)$]
\item If $\HH^3(J)^{\Aut(C)} = 0$, then the Ceresa cycle is torsion modulo rational equivalence; 
\item If $\HH^0(J, \Omega_J^3)^{\Aut(C)} = 0$, then the Ceresa cycle is torsion modulo algebraic equivalence,
\end{enumerate}
with criterion $(B)$ conditional on the Hodge conjecture.  

We then use these criteria to study the simplest family of curves where $(B)$ holds but $(A)$ does not, namely the family of Picard curves $C \colon y^3 = x^4 + ax^2 + bx + c$. Criterion $(B)$ and work of Schoen combine to show that the Ceresa cycle of a Picard curve is torsion in the Griffiths group. We furthermore determine exactly when it is torsion in the Chow group. As a byproduct, we deduce: there exist one-parameter families of plane quartic curves with torsion Ceresa Chow class; the torsion locus in $\mathcal{M}_3$ of the Ceresa Chow class contains infinitely many components; the order of a torsion Ceresa Chow class of a Picard curve over a number field $K$ is bounded, with the bound depending only on $[K\colon \mathbb{Q}]$.
Finally, we determine which automorphism group strata are contained in the vanishing locus of the universal Ceresa cycle over $\mathcal{M}_3$. 
\end{abstract}

\makeatletter
\makeatother

\section{Introduction}

Let $k$ be an algebraically closed field and $C$ a smooth, projective, and connected curve over $k$ of genus $g \geq 2$ with Jacobian variety $J$.
Let $e$ be a degree-$1$ divisor of $C$ and let $\iota_e \colon C \hookrightarrow J$ be the Abel-Jacobi map based at $e$. 
We study the torsion behaviour of the Ceresa cycle 
\begin{align}\label{equation: ceresa cycle}
    \kappa_{C,e} := [\iota_e(C)]- (-1)^*[\iota_e(C)] \in \CH_1(J)
\end{align}
in the Chow group modulo rational equivalence.
If $\kappa_{C,e}$ is torsion, then $(2g-2)e= K_C$ in $\CH_0(C)\otimes \Q$, where $K_C$ is the canonical divisor class.
Moreover if $(2g-2)e$ is canonical, then the image of $\kappa_{C,e}$ in $\CH_1(J)\otimes{\Q}$ is independent of $e$ and we denote it by $\kappa(C)$, see \S\ref{subsec: ceresa cycles} for these claims.
Thus, the class $\kappa(C)$ vanishes if and only if $\kappa_{C,e}$ is torsion for some degree-$1$ divisor $e$.

We also consider the image $\bar{\kappa}(C)$ of $\kappa(C)$ in the Griffiths group $\Gr_1(J)\otimes \Q$ of homologically trivial $1$-cycles modulo algebraic equivalence. 
When $g = 2$, or more generally when $C$ is hyperelliptic, it is easy to see that $\kappa(C) = 0$.  
On the other hand, Ceresa famously showed that $\bar{\kappa}(C) \neq 0$ for a very general curve $C$ over $\C$ of genus $g \geq 3$ \cite{Ceresa}.

The vanishing of the Ceresa cycle is interesting for various reasons. For example, $\kappa(C) = 0$ if and only if the Chow motive $\frak{h}(C)$ has a multiplicative Chow-K\"unneth decomposition (by \cite[Proposition 3.1]{FuLaterveerVial} and Proposition \ref{proposition: vanishing ceresa equivalent to vanishing C1}).
Moreover $\bar{\kappa}(C)=0$ if and only if the tautological subring modulo algebraic equivalence is generated by a theta divisor (by \cite[Corollary 3.4]{Beauville-algebraiccyclesonjacobianvarieties}), in which case Poincar\'e's formula $[C] = \frac{\Theta^{g-1}}{(g-1)!}$ holds modulo algebraic equivalence. 
More generally, the Ceresa cycle over $\mathcal{M}_g$ serves as a testing ground for the study of homologically trivial algebraic cycles in codimension greater than 1.    

\subsection{Vanishing criteria} We prove cohomological vanishing criteria for Ceresa cycles of curves with nontrivial automorphisms. Let $\HH^*(-)$ be a Weil cohomology functor, such as $\ell$-adic cohomology with $\ell \neq \char(k)$ or singular cohomology when $k = \C$.  Note that the finite group $\Aut(C)$ acts on $\HH^*(J)$, by functoriality.  Cupping with the principal polarization gives an injection $\HH^1(J)(-1) \hookrightarrow \HH^3(J)$, allowing us to define the primitive cohomology $\HH^3(J)_{\mathrm{prim}} := \HH^3(J)/\HH^1(J)(-1)$. 

\begin{thm}\label{thm: main}
If $\HH^3(J)_{\mathrm{prim}}^{\Aut(C)} = 0$, then $\kappa(C) = 0$.   \end{thm}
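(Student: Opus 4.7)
The plan is to prove the vanishing of $\kappa(C)$ in three steps: $\Aut(C)$-invariance, a motivic localization, and a Kimura finite-dimensionality argument.

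First, I would show that $\kappa(C)$ is $\Aut(C)$-invariant in $\CH_1(J) \otimes \Q$. Each $\sigma \in \Aut(C)$ induces a group isomorphism $\sigma_J \colon J \to J$, which commutes with $[-1]_J$ and satisfies $\sigma_J \circ \iota_e = \iota_{\sigma(e)} \circ \sigma$; pushing forward, $\sigma_{J,*}[\iota_e(C)] = [\iota_{\sigma(e)}(C)]$ and hence $\sigma_{J,*}\kappa_{C,e} = \kappa_{C,\sigma(e)}$. Since $\sigma$ preserves the canonical class $K_C$, and $\kappa(C)$ only depends on the class of $(2g-2)e$ in $\CH_0(C) \otimes \Q$ (as recalled in \S\ref{subsec: ceresa cycles}), we conclude $\sigma_{J,*}\kappa(C) = \kappa(C)$.

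Next, I would interpret $\kappa(C)$ motivically. Using K\"unnemann's Chow--K\"unneth decomposition of $\mathfrak{h}(J)$ refined by the Beauville decomposition, one has $\CH^{g-1}(J)_{(s)} \otimes \Q = \CH^{g-1}(\mathfrak{h}^{2g-2-s}(J))$, and since $[-1]^* \kappa(C) = -\kappa(C)$, the Ceresa cycle has only odd-$s$ components. A convenient way to produce a motivic morphism representing $\kappa(C)$ is via the Gross--Schoen modified diagonal $\Delta_e \in \CH^2(C^3) \otimes \Q$, satisfying $s_*\Delta_e = c \cdot \kappa(C)$ for a nonzero $c$ and the summation $s \colon C^3 \to J$. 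The cycle $\Delta_e$ is motivically supported on $\mathfrak{h}^1(C)^{\otimes 3}$, and the Lefschetz decomposition isolates a primitive summand $\mathfrak{h}^3(J)_{\prim} \subset \wedge^3 \mathfrak{h}^1(C)$ whose realization is $\HH^3(J)_{\prim}$; Poincar\'e's formula $[C] \equiv \Theta^{g-1}/(g-1)!$ modulo homological equivalence ensures that $\kappa(C)$ is motivically primitive.

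Finally, I would invoke Kimura finite-dimensionality. By Shermenev, $\mathfrak{h}^1(C)$ is oddly Kimura finite-dimensional, so every direct summand of $\mathfrak{h}^1(C)^{\otimes 3}$ is Kimura finite-dimensional; in particular so is $\mathfrak{h}^3(J)_{\prim}^{\Aut(C)}$, obtained via the averaging idempotent $|\Aut(C)|^{-1} \sum_{\sigma} \Gamma_\sigma \in \Corr(C,C) \otimes \Q$. Its realization is $\HH^3(J)_{\prim}^{\Aut(C)} = 0$ by hypothesis, and a standard consequence of Kimura's theorem -- that an oddly finite-dimensional motive with trivial realization is zero -- forces $\mathfrak{h}^3(J)_{\prim}^{\Aut(C)} = 0$ as a rational Chow motive. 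Hence the morphism representing $\kappa(C)$ is zero, i.e., $\kappa(C) = 0$.

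The main obstacle is the motivic localization step: verifying that $\kappa(C)$ sits in the primitive piece at the motivic level rather than merely cohomologically, which requires careful bookkeeping of the Chow--K\"unneth decomposition of $\mathfrak{h}(C^3)$ and the Lefschetz projection. For $g \ge 4$ there is a further subtlety, since $\kappa(C)$ has several nonzero Beauville components $\CH^{g-1}(J)_{(s)}$ for odd $s \ge 1$, and one must ensure that the single hypothesis on $\HH^3(J)_{\prim}^{\Aut(C)}$ suffices to kill all of them -- most likely by exploiting motivic Hard Lefschetz or the Fourier--Mukai transform relating different Beauville eigenpieces.
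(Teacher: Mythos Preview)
Your overall strategy matches the paper's: locate the Ceresa cycle in the Chow group of a summand of $\mathfrak{h}(J)$ cut out by the $\Aut(C)$-averaging idempotent together with the Lefschetz primitive projector, observe that this summand has trivial realization by hypothesis, and apply Kimura finite-dimensionality to conclude the motive is zero. The paper phrases this via the Beauville component $[C]_1 \in \CH^{g-1}(\mathfrak{h}^{2g-3}_{\mathrm{prim}}(J)^{\Aut(C)})$ rather than via the Gross--Schoen cycle on $C^3$, but that is a cosmetic difference.

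However, the step you yourself flag as the ``main obstacle'' is a genuine gap, and your proposed fix does not work. Poincar\'e's formula $[C] \equiv \Theta^{g-1}/(g-1)!$ holds only modulo homological equivalence, so it cannot show that $[C]_1$ (or $\kappa(C)$) is primitive in the Chow group. The paper's argument is quite different: an explicit Pontryagin product identity
\[
(2g-2)\,[C]_0^{\star(g-2)} \star [C]_1 \;=\; [C]^{\star(g-1)} \star\bigl([0]-[x_e]\bigr),
\]
where $x_e$ is the class of $(2g-2)e - K_C$ in $J$. When $(2g-2)e = K_C$ in $\CH_0(C)_\Q$ the right-hand side vanishes, and via the $\mathfrak{sl}_2$-action on $\CH^*(J)_\Q$ this gives $\ell \cdot [C]_1 = 0$, i.e.\ $[C]_1$ is primitive at the level of Chow groups. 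This is the substantive input you are missing; no purely cohomological argument will supply it.

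Your second worry, about the higher odd Beauville components when $g \geq 4$, is also real, and neither motivic Hard Lefschetz nor the Fourier transform alone resolves it. The paper invokes a result of Shou-Wu Zhang (from \emph{Gross--Schoen cycles and dualising sheaves}) to the effect that $[C]_1 = 0$ forces $[C]_s = 0$ for all $s \geq 1$. With that in hand, killing $[C]_1$ suffices.

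Finally, the detour through $\Delta_e \in \CH^2(C^3)$ is unnecessary: the paper works entirely on $J$, and the only motivic fact needed is that every summand of $\mathfrak{h}(J)$ is Kimura finite-dimensional.
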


This improves on a recent result of Qiu and Zhang stating that if $(\HH^1(C)^{\otimes 3})^{\Aut(C)} = 0$, then $\kappa(C) = 0$ \cite{QiuZhang}.\footnote{
This formulation is equivalent to theirs, using \cite[Theorem 1.5.5]{ShouwuZhang}.
} 
By contrast, Theorem \ref{thm: main} requires only the weaker condition that the subrepresentation $\HH^3(J)_{\mathrm{prim}} \subset \HH^3(J) \simeq \bigwedge^3 \HH^1(C) \subset \HH^1(C)^{\otimes 3}$ has no nontrivial $\Aut(C)$-fixed points.  If the quotient $C/\Aut(C)$ has genus $0$, then our hypothesis is equivalent to $\HH^3(J)^{\Aut(C)} = 0$.  

Our proof of Theorem \ref{thm: main} is inspired by Beauville's proof that for the curve $y^3 = x^4 + x$, the image of $\kappa(C)$ under the complex Abel-Jacobi map vanishes \cite{Beauville}. To achieve a vanishing result in the Chow group, we work directly with the rational Chow motive $\frak{h}^3(J)$ and make crucial use of the finite-dimensionality results of Kimura \cite{Kimura}.

Our second result is a vanishing criterion for $\overline{\kappa}(C)$, however the result  is conditional on the Hodge conjecture. In particular, we assume for the rest of the introduction that $k$ has characteristic $0$. 

\begin{thm}\label{thm: griffiths vanishing}
    Assume the Hodge conjecture for abelian varieties. If $\HH^0(J, \Omega_J^3)^{\Aut(C)} = 0$, then $\bar{\kappa}(C) = 0$. 
\end{thm}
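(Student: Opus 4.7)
The plan is to adapt the motivic argument behind Theorem~\ref{thm: main} so that it works modulo algebraic equivalence, using the Hodge conjecture to trade the $\Aut(C)$-invariant part of $\HH^3(J)$ for the first cohomology of an auxiliary abelian variety.

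The first step is to combine Hodge symmetry with the hypothesis. Since $\Aut(C)$ respects the Hodge decomposition and $\HH^0(J, \Omega_J^3) = \HH^{3,0}(J)$, the assumption yields $\HH^{3,0}(J)^{\Aut(C)} = 0$, and complex conjugation on the underlying real Hodge structure gives $\HH^{0,3}(J)^{\Aut(C)} = 0$ as well. Hence $V := \HH^3(J, \Q)^{\Aut(C)}$ is a sub-Hodge structure contained in $\HH^{2,1}(J) \oplus \HH^{1,2}(J)$, i.e.\ of weight $3$ and level $\leq 1$. Any such polarizable rational Hodge structure is isomorphic, up to isogeny, to $\HH^1(B')(-1)$ for some abelian variety $B'$, and the Hodge conjecture applied to $B' \times J$ produces an algebraic correspondence $\gamma \in \CH^{g-1}(B' \times J)_\Q$ realizing the inclusion $\HH^1(B')(-1) \hookrightarrow \HH^3(J, \Q)$ with image $V$.

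Next I would transport the Ceresa class through this correspondence. Let $\pi_3^G := e_G \circ \pi_3^J$, where $\pi_3^J$ is the degree-$3$ K\"unneth projector of $J$ (available as a genuine Chow cycle by Deninger--Murre) and $e_G = \frac{1}{|G|} \sum_{g \in G} g$ is the averaging idempotent for $G = \Aut(C)$. The summand $M := \pi_3^G \cdot \mathfrak{h}(J)$ of the Chow motive of $J$ has cohomological realization $V$, and Kimura--O'Sullivan finite-dimensionality of abelian motives combined with the Hodge conjecture should upgrade $\gamma$ to an isomorphism $M \simeq \mathfrak{h}^1(B')(-1)$ in the category of motives of abelian varieties modulo algebraic equivalence. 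The Ceresa class is $\Aut(C)$-invariant and, via Beauville's decomposition, lies in the summand of $\CH^{g-1}(J)_\Q$ corresponding to $\mathfrak{h}^3(J)$; under the above isomorphism it transports to a class $\alpha \in \CH_0(B')_\Q$, well-defined modulo algebraic equivalence. Since $\alpha$ is homologically trivial (as $\kappa(C)$ is) and every degree-zero zero-cycle on an abelian variety is algebraically trivial (witnessed by the family $\Delta_{B'} - B' \times \{0\}$ on $B' \times B'$), the class $\alpha$ vanishes modulo algebraic equivalence. Pushing this vanishing forward along $\gamma$ then yields $\bar{\kappa}(C) = 0$ in $\Gr_1(J)_\Q$.

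The main technical difficulty is the passage from a cohomological (or mod-homological) identification of motives, which the Hodge conjecture provides, to an identification valid modulo algebraic equivalence, which is what is needed to move the Ceresa class. For abelian varieties this ought to be accessible by combining Kimura--O'Sullivan finite-dimensionality with Beauville's results on Chow groups of abelian varieties modulo algebraic equivalence, but the bookkeeping is delicate and is precisely the step where the weaker Hodge-theoretic hypothesis (in place of the full cohomological hypothesis of Theorem~\ref{thm: main}) is genuinely exploited.
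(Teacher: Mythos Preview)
Your overall strategy matches the paper's (Proposition~\ref{prop: preparation griffiths vanishing}), but several details are off and one of them is exactly the step you flag as delicate.

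First, the Ceresa cycle does not live in the K\"unneth summand $\frak{h}^3(J)$. The relevant Beauville component is $[C]_1 \in \CH^{g-1}_{(1)}(J) = \CH^{g-1}(\frak{h}^{2g-3}(J))$, since $\CH^p(\frak{h}^i) = \CH^p_{(2p-i)}$; the higher components $[C]_3, [C]_5,\ldots$ sit in yet other degrees, so you must first reduce to $[C]_1$ via Proposition~\ref{proposition: vanishing ceresa equivalent to vanishing C1}. The paper then passes from $\frak{h}^{2g-3}$ to $\frak{h}^3$ using K\"unnemann's motivic Lefschetz isomorphism $\frak{h}^{2g-3}(J)\simeq \frak{h}^3(J)(-(g-3))$; without this you cannot connect $[C]_1$ to your isomorphism $M\simeq \frak{h}^1(B')(-1)$. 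Consequently the transported class lands in $\CH^1(\frak{h}^1(B')) = \CH^1_{(1)}(B')$, the anti-symmetric divisor classes, not in $\CH_0(B')$; the relevant triviality statement is that homological and algebraic equivalence coincide for divisors (Lemma~\ref{lemma: Griffiths group vanishes for h^1}), not your zero-cycle argument.

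Second, and more importantly, the ``main technical difficulty'' you identify is not a difficulty at all, and does not require working modulo algebraic equivalence. Once the Hodge conjecture supplies correspondences $\Phi\colon \frak{h}^3(J)^G\to \frak{h}^1(B')(-1)$ and $\Psi$ in the opposite direction whose compositions act as the identity on cohomology, Kimura's nilpotence result (\cite[Proposition~7.2]{Kimura}, or \cite[Corollaire~3.16]{andre-motifsdimensionfinie}) forces $\Phi\circ\Psi$ and $\Psi\circ\Phi$ to be isomorphisms of \emph{Chow} motives. You then simply read off that $\CH^{g-1}(\frak{h}^{2g-3}(J)^G)\simeq \CH^1(\frak{h}^1(B'))$ consists entirely of algebraically trivial classes. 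Note that you need correspondences in both directions here; a single $\gamma$ as in your sketch is not enough.
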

More precisely, we require the Hodge conjecture for $J\times A$, where $A$ is the abelian variety described in Proposition \ref{prop: preparation griffiths vanishing}.
Note that $\HH^0(J, \Omega_J^3) \simeq \bigwedge^3\HH^0(C, \Omega_C^1)$, so the conditions of Theorems \ref{thm: main} and \ref{thm: griffiths vanishing} both depend only on the abstract representation $(G,V) = (\Aut(C), \HH^0(C, \Omega_C^1))$. 

The proof of Theorem \ref{thm: griffiths vanishing} is in the same spirit as that of Theorem \ref{thm: main}. The Hodge conjecture is used to show that the motive $\frak{h}^3(J)^{\Aut(C)}$ is isomorphic to $\frak{h}^1(A)(-1)$, from which the algebraic triviality of $\kappa(C)$ follows. 

\subsection{Picard curves}

The condition $\HH^3(J)_{\mathrm{prim}}^{\Aut(C)} = 0$ in Theorem \ref{thm: main} is only rarely satisfied, e.g.\ it holds for exactly two plane quartic curves over $\C$ (see Theorem \ref{theorem: main theorem E} below).  
The condition $\HH^0(J, \Omega_J^3)^{\Aut(C)}=0$ of Theorem \ref{thm: griffiths vanishing} is satisfied more often; e.g.\ it holds for all Picard curves $y^3 = x^4 + ax^2 + bx + c$.
For such curves, it turns out that we require a nontrivial case of the Hodge conjecture to apply Theorem \ref{thm: griffiths vanishing}, namely that a certain Weil class on an abelian fourfold is algebraic.
Fortunately, Schoen has proved the Hodge conjecture in our specific situation \cite{schoen-addendumhodgeclassesselfproducts}. (See also recent work of Markman \cite{Markman-monodromygeneralized}.) 
This leads to an unconditional proof of the vanishing of $\bar{\kappa}(C)$.  By further analyzing $\kappa(C)$, we prove the following precise characterization of the vanishing of $\kappa(C)$ in the Picard family:
\begin{thm}\label{thm: Picard main}
    Let $C_f$ be a smooth projective Picard curve with model 
    \begin{align}\label{equation: picard curve}
    C_f \colon y^3 = f(x) = x^4+ ax^2+bx+c
    \end{align}
    for some $a,b,c\in k$. Consider the point $P_f= (a^2+12c,72ac-2a^3-27b^2)$ on the elliptic curve 
    \[E_f \colon y^2 = 4x^3-27\cdot \disc(f).\] 
    Then $\bar{\kappa}(C_f)=0$, and $\kappa(C_f)=0$ if and only if $P_f \in E_f(k)$ is torsion.
\end{thm}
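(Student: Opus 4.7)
The plan has three steps: (1) show $\bar{\kappa}(C_f) = 0$ via Theorem \ref{thm: griffiths vanishing} combined with Schoen's theorem; (2) use the motivic decomposition to realize $\kappa(C_f)$ as a class in the Mordell--Weil group of an auxiliary elliptic curve $E/k$; (3) identify $E$ with $E_f$ and the class with $\pm P_f$. For step (1), the order-$3$ automorphism $\sigma(x,y) = (x, \zeta_3 y)$ generates $\mu_3 \subseteq \Aut(C_f)$ and acts on the standard basis $\{dx/y^2,\, x\,dx/y^2,\, dx/y\}$ of $H^0(C_f, \Omega^1)$ with weights $(\zeta_3, \zeta_3, \zeta_3^2)$. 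Thus $H^0(J_f, \Omega^3)^{\mu_3} = \bigl(\bigwedge^3 H^0(C_f, \Omega^1)\bigr)^{\mu_3}$ has nontrivial weight $\zeta_3$ and vanishes. Theorem \ref{thm: griffiths vanishing} now applies; the required Hodge conjecture input is the algebraicity of a Weil class on a self-product of a $j=0$ CM elliptic curve, which is exactly Schoen's theorem \cite{schoen-addendumhodgeclassesselfproducts}. So $\bar{\kappa}(C_f) = 0$ unconditionally.

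For step (2), the same $\mu_3$-equivariant Hodge analysis shows that $H^3(J_f, \Q)^{\mu_3}$ is a $2$-dimensional $\Q$-Hodge structure with CM by $\Q(\zeta_3)$ and Hodge numbers $h^{2,1} = h^{1,2} = 1$; hence it is isomorphic to $H^1(E)(-1)$ for a unique elliptic curve $E/k$ with $j(E) = 0$. Schoen's theorem promotes this to an isomorphism of rational Chow motives $\mathfrak{h}^3(J_f)^{\mu_3} \cong \mathfrak{h}^1(E)(-1)$. Since $\kappa(C_f)$ is $\mu_3$-invariant and (by Beauville's theorem) lies in the piece $\CH_1(J_f)^{(1)}_\Q = \CH^2(\mathfrak{h}^3(J_f))_\Q$, taking Chow groups of the motivic isomorphism yields an Abel--Jacobi identification
\[
\AJ \colon \CH^2(\mathfrak{h}^3(J_f)^{\mu_3})_\Q \xrightarrow{\sim} \CH^2(\mathfrak{h}^1(E)(-1))_\Q = E(k) \otimes \Q,
\]
and $\kappa(C_f) = 0$ if and only if $\AJ(\kappa(C_f))$ is torsion in $E(k)$.

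For step (3), since $j(E) = 0$, $E$ is a sextic twist of $y^2 = 4x^3 - 27$ determined by a class in $k^*/(k^*)^6$. Computing this twist via the comparison between the Hodge and \'etale realizations of $\mathfrak{h}^3(J_f)^{\mu_3}$ identifies it with $\disc(f) \bmod (k^*)^6$, giving $E = E_f$. To locate the point, we work universally over $B = \Spec k[a,b,c][\disc(f)^{-1}]$: the class $\AJ(\kappa(C_f))$ defines a section of $E_f \to B$ that is $\G_m$-equivariant for the scaling $(x, y; a, b, c) \mapsto (t^3 x,\, t^4 y;\, t^6 a,\, t^9 b,\, t^{12} c)$, so its coordinates are weighted-homogeneous polynomials in $a,b,c$ of weights $12$ and $18$. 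The defining equation $Y^2 = 4X^3 - 27\disc(f)$ then forces $(X, Y) = \pm(a^2 + 12c,\, 72ac - 2a^3 - 27b^2) = \pm P_f$, and the sign is immaterial for torsion. The \textbf{main obstacle} is in steps (2)--(3): upgrading the Hodge-theoretic isomorphism to a motivic one (using Schoen together with Kimura finite-dimensionality) and concretely identifying the sextic twist as $\disc(f)$ via an explicit period computation.
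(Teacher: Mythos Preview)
Your steps (1) and (2) track the paper's argument closely and are essentially correct: the $\mu_3$-representation computation, the application of Schoen's theorem, and the reduction of the question to an Abel--Jacobi image in a $j=0$ elliptic curve are exactly how the paper proceeds. The difficulties are all in step (3), and here your proposal has real gaps.

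\textbf{The twist is not identified.} You assert that ``comparison between the Hodge and \'etale realizations'' shows the sextic twist is $\disc(f)\bmod (k^*)^6$, but you give no mechanism for this. In the paper this is a genuine computation (Proposition~\ref{proposition: identification complex tori}): one classifies $\mu_6$-torsors on the parameter space $S\subset\A^3$ via the Kummer sequence, uses that $\Pic(S)=0$ and $\G_m(S)$ is generated by $\disc$, and then---crucially---pins down which power of $\disc$ occurs by restricting to the bielliptic locus $b=0$, where the authors' earlier paper \cite{LagaShnidman-CeresaBiellipticPicard} already computed the answer. A direct ``period computation'' would be a different route, but you have not indicated how to carry it out.

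\textbf{$\G_m$-equivariance does not force polynomial coordinates.} Your key claim is that the section of $\mathcal{E}_f\to B$ has coordinates that are weighted-homogeneous \emph{polynomials} of weight $(12,18)$, whence the equation $Y^2=4X^3-27\disc(f)$ forces $(X,Y)=\pm P_f$. The equivariance only gives weighted-homogeneous elements of $\mathcal{O}(B)$, i.e.\ rational functions of the form $p(a,b,c)/\disc(f)^m$ with $p$ homogeneous of weight $12+36m$. There are infinitely many such sections a priori (e.g.\ $2P_f$, $3P_f$, $\ldots$ all have this shape for suitable $m$), so your argument does not close. The paper handles this with an honest Mordell--Weil computation (Proposition~\ref{proposition: group of sections of calE}): restricting to suitable lines in $S$ and applying the Shioda--Tate formula, one shows $\mathcal{E}(S)$ is free of rank~$1$ over $\Z[\omega]$ and that $P_f$ generates a finite-index subgroup. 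This is what forces the normal function to be a nonzero $\mathcal{O}\otimes\Q$-multiple of $P_f$.

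\textbf{Two further missing ingredients.} First, the normal function is a priori only a holomorphic section; the paper invokes the algebraicity theorem of Achter--Casalaina-Martin--Vial for normal functions of algebraically trivial cycles (which in turn needs the uniform bound of Theorem~\ref{thrm main body: picard curve vanish griffiths group}). Second, even granting everything above, one must rule out that the section is identically zero; the paper does this by exhibiting a single Picard curve ($f=x^4+x^2+1$) with non-torsion Ceresa cycle, again imported from the bielliptic case. Your proposal does not address either point.

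In short, your outline is the right shape and your observation that the polynomial solutions of weight $(12,18)$ to $Y^2=4X^3-27\disc(f)$ are exactly $\pm P_f$ is correct and pleasant---but it is not what controls the normal function. The paper's substitute is the Mordell--Weil rank computation plus the bootstrap from the bielliptic subfamily; without analogues of these, step (3) does not go through.
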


Here, $\disc(f)$ is the discriminant of $f$ and the coordinates of $P_f$ are the usual $I$- and $J$-invariants of the binary quartic form $z^4 f(x/z)$; see \S\ref{subsec: generalities}.
The fact that $P_f$ defines a point on $E_f$ follows from a classical relation between the invariants of a binary quartic. Picard curves have a unique point $\infty$ at infinity, for which $(2g-2)\infty = 4\infty$ is canonical. For a more precise version of Theorem \ref{thm: Picard main} relating the torsion orders of $\kappa_{C_f,\infty}$ and $P_f$, see \S\ref{subsec: ceresa vanishing criteria}.

Theorem \ref{thm: Picard main} generalizes the results of \cite{LagaShnidman-CeresaBiellipticPicard} concerning \emph{bielliptic} Picard curves, i.e.\ those with $b=0$ in \eqref{equation: picard curve}.
There we exploited the explicit geometry of the bielliptic cover to characterize the vanishing locus of $\kappa(C)$.
Here we must proceed more indirectly: using Schoen's work on the Hodge conjecture, we first show that the vanishing of $\kappa(C)$ is equivalent to the vanishing of its Abel--Jacobi image in the intermediate Jacobian. 
Using the Shioda-Tate formula and considerations of isotrivial families of elliptic curves, we are able to bootstrap the calculations of \cite{LagaShnidman-CeresaBiellipticPicard} to  determine the Abel-Jacobi image up to multiple and obtain the characterization of Theorem \ref{thm: Picard main}.

In \S\ref{sec: proofs theorems A and B}, we give several examples of higher genus families of curves $\mathcal{C} \rightarrow B$ for which Theorem \ref{thm: griffiths vanishing} applies. It would be interesting to completely characterize the vanishing of $\kappa(C)$ in these families as well, and to find more such families.  
The general proof strategy would follow that of Theorem \ref{thm: Picard main}: using the Hodge conjecture, one shows that the vanishing of $\kappa(\mathcal{C}_b)$ is equivalent to the vanishing of $\sigma(b)$, where $\sigma$ is a section of a certain abelian scheme $\mathcal{A} \rightarrow B$.
(The analytification of $\sigma$ is the normal function associated to the Ceresa cycle over $B$, which lands in the $\Aut(C)$-fixed points of the intermediate Jacobian.) One then tries to identify $\mathcal{A} \rightarrow B$  and $\sigma$ explicitly. 

\subsection{Vanishing loci in genus $3$}\label{subsec: intro vanishing loci in genus 3}
Let $\mathcal{M}_g$ be the coarse moduli space of genus-$g$ curves $C$, seen as a variety over $\Q$.
Let $V_g^{\mathrm{rat}}\subset \mathcal{M}_g$ be the locus of curves with $\kappa(C) = 0$.
Let $V_g^{\mathrm{alg}}\subset \mathcal{M}_g$ be the analogous subset for $\bar{\kappa}(C)$. 
The subsets $V_g^{\mathrm{rat}}$ and $V_g^{\mathrm{alg}}$ are each a countable union of closed algebraic subvarieties (Lemma \ref{lemma: Zg,barZg countable union}). We have $V_g^{\mathrm{rat}}\subset V_g^{\mathrm{alg}}$, and Ceresa's result \cite{Ceresa} shows that $V_g^{\mathrm{alg}} \neq \mathcal{M}_g$ for all $g\geq 3$.
What can be said about the irreducible components of $V_g^{\mathrm{rat}}$ and $V_g^{\mathrm{alg}}$ and their dimensions? For a fixed $g$, are there only finitely many components?  
Collino and Pirola showed that $V_3^{\mathrm{rat}}$ does not contain subvarieties of dimension $\geq 4$ that are not themselves contained in the hyperelliptic locus \cite[Corollary 4.3.4]{ColinoPirola-infinitesimal}.
On the other hand, Theorem \ref{thm: Picard main} shows that $V_3^{\mathrm{alg}}$ contains the $2$-dimensional Picard locus. 
Turning to $V_3^{\mathrm{rat}}$, we answer several open question about its geometry and arithmetic by analyzing the torsion locus of the section $P_f$ of $E_f$:



\begin{thm}\label{thm: corollaries}
Let $X\subset \mathcal{M}_3$ be the Picard locus.
    \begin{enumerate}[$(i)$]
    \item $(X\cap V_3^{\mathrm{rat}})\times_{\Q} \mathbb{C}$ is a countably infinite disjoint union of rational curves. In particular, there exist one-parameter families of plane quartic curves in $V_3^{\mathrm{rat}}$,  
    and $V_3^{\mathrm{rat}}$ contains infinitely many positive-dimensional components.
    \item If $K$ is a number field and $C \in (X \cap V_3^{\mathrm{rat}})(K)$, then the order of $\kappa_{C,\infty}$ in $\CH_1(J)$ is bounded above by a quantity depending only on $[K\colon \Q]$.
\end{enumerate}
\end{thm}

An example of a one-parameter family as in $(i)$ is the family $C_t \colon y^3 = x^4 -12x^2 + tx -12$. 
Note that we have previously shown in \cite[Theorem 1.3]{LagaShnidman-CeresaBiellipticPicard} that the torsion orders of the classes $\kappa_{C,e}$, where $C$ has genus $3$ and $4e= K_C$, are unbounded.

Recall that there is a stratification of the non-hyperelliptic locus of $\mathcal{M}_3$ by locally closed subvarieties $X_G$, indexed by certain finite groups $G$, with the property that a non-hyperelliptic curve $[C]\in \mathcal{M}_3(\C)$ lies in $X_G$ if and only if $\Aut(C)\simeq G$. It turns out that the isomorphism class of the $\Aut(C)$-representation $\HH^0(C, \Omega_C^1)$ does not depend on the choice of curve $[C]$ in $X_G(\C)$.
Our final theorem determines exactly which $X_G$ are contained in $V_3^{\mathrm{rat}}$ or $V_3^{\mathrm{alg}}$.

\begin{thm}\label{theorem: main theorem E}
    Let $G$ be a finite group isomorphic to the automorphism group of a plane quartic. 
    Then $X_G\subset V_3^{\mathrm{rat}}$ if and only if $G=C_9$ or $G_{48}$, and $X_G\subset V_3^{\mathrm{alg}}$ if and only if $X_G$ is contained in the Picard locus, in other words if and only if $G= C_3,C_6,C_9$ or $G_{48}$.
\end{thm}

Here, $G_{48}$ is the group with GAP label $(48,33)$. The strata $X_{C_{9}}$ and $X_{G_{48}}$ are single closed points, represented by the curves $y^3= x^4+x$ and $y^3 = x^4+1$ respectively. 

Theorem \ref{theorem: main theorem E} shows that $X_G\subset V_3^{\mathrm{rat}}$ if and only if $\HH^3(J)^G=0$ for every (equivalently, some) curve in $X_G$, and that $X_G\subset V_3^{\mathrm{alg}}$ if and only if $\HH^0(J, \Omega_J^3)^G=0$ for every curve in $X_G$. Thus, for $g = 3$, the criteria of Theorems \ref{thm: main} and \ref{thm: griffiths vanishing} exactly single out the strata $X_G$ where $\kappa(C)$ or $\bar{\kappa}(C)$ identically vanishes.
Does this continue to hold in higher genus? Are there non-hyperelliptic curves of arbitrarily large genus for which Theorem \ref{thm: main} or \ref{thm: griffiths vanishing} applies?

We also use Theorem \ref{thm: griffiths vanishing} to find the first examples of positive dimensional components of $V_g^{\mathrm{alg}}$ with generic point satisfying $\Aut(C) = \{1\}$.\footnote{Qiu and Zhang have previously found individual curves with $\kappa(C) =0 $ and $\Aut(C) = \{1\}$ \cite{QiuZhangII}.} This uses the following fact: if $C$ dominates another curve $D$, then $\overline{\kappa}(C)=0$ implies $\overline{\kappa}(D) = 0$. For example, the genus 6 family 
  \[C_t \colon y^9 = \left(\frac{x+1}{x-1}\right) \left(\frac{x+t}{x-t}\right)^3,\]    
  admits a $D_9$-action and lies in $V_6^{\mathrm{alg}}$ (conditional on the Hodge conjecture). The quotient by the involution $(x,y) \mapsto (-x,y^{-1})$ is the genus 3 family
  \[X_u \colon y^4 = xy^3 + u x^3 + 3y^2 - 2u x - u y - 1,\]
where $u = (t-1)/(t+1)$, which lies in $V_3^{\mathrm{alg}}$ by the fact above, and satisfies $\Aut(X_u) = \{1\}$ generically. A similar construction gives a 1-dimensional component of $V_4^{\mathrm{alg}}$ (again conditional on the Hodge conjecture) with trivial generic automorphism group; see Theorem \ref{thm: trivial automorphism group example}.  Thus, even in genus 3, the implications of Theorems \ref{thm: main} and \ref{thm: griffiths vanishing} are not yet fully understood, since it is hard to know which curves admit high genus covers that satisfy our vanishing criteria. We should remark that, as far as we know, there are no known examples of curves $C$ with $\overline{\kappa}(C) = 0$, $\Aut(C) = \{1\}$, and $\End(J) = \Z$; see \cite{EllenbergLoganSrinivasan} for data confirming this for many quartic plane curves. The curves $X_u$ are no exceptions as generically we have $\End^0(\Jac(X_u)) = \Q(\zeta_9 + \zeta_9^{-1})$ by Remark \ref{rem: costa}.  

Finally, Gao and Zhang have recently proven a Northcott property for the Beilinson-Bloch height of the Ceresa cycle on $J$ and (equivalently) the modified diagonal cycle $\Delta(C)$ on $C^3$ \cite{GaoZhang-NorthcottCeresa}. More precisely, for each $g \geq 3$, there exists an open dense subset $U_g \subset \mathcal{M}_g$ such that for any $X \in \R$ and $d \in \N$, the number of $C \in U_g(\bar\Q)$ defined over a number field of degree at most $d$ and with $\langle \Delta(C), \Delta(C)\rangle < X$ is finite. In order to better understand Ceresa vanishing loci in families of curves, it is of great interest to try to identify the largest such open dense set $U_g$, or equivalently, its complement $Z_g^{\mathrm{slim}} := \mathcal{M}_g \setminus U_g$.  The Northcott property implies that any positive dimensional component of $V_g^{\mathrm{rat}}$ is contained in $Z_g^{\mathrm{slim}}$, but $Z_g^{\mathrm{slim}}$ may be strictly larger than the union of the positive dimensional components of $V_g^{\mathrm{rat}}$. Indeed, Theorem \ref{thm: corollaries} shows that $X_{C_3} \subset Z_3^{\mathrm{slim}}$, even though $X_{C_3} \not\subset V_3^{\mathrm{rat}}$.

\subsection{Structure of paper}
In \S\ref{sec: notation and background} we collect some standard results on Chow groups, Chow motives and Ceresa cycles. 
In \S\ref{sec: proofs theorems A and B} we prove the cohomological vanishing criteria (Theorems \ref{thm: main} and \ref{thm: griffiths vanishing}) and give some examples.
In \S\ref{sec: Picard} we study the family of Picard curves in detail and prove Theorems \ref{thm: Picard main} and \ref{thm: corollaries}.
Finally, in \S\ref{sec: automorphism strata and Ceresa vanishing genus 3} we introduce the Ceresa vanishing loci $V_g^{\mathrm{rat}},V_g^{\mathrm{alg}}\subset \mathcal{M}_g$, prove Theorem \ref{thm: corollaries} and determine which automorphism strata they contain in genus $3$, proving Theorem \ref{theorem: main theorem E}.

\subsection{Acknowledgements}
We thank Jeffrey Achter, Irene Bouw, Edgar Costa, Ben Moonen, Burt Totaro and Congling Qiu for helpful conversations.
This research was carried out while the first author was a Research Fellow at St John's College, University of Cambridge, which he thanks for providing excellent working conditions.
The second author was funded by the European Research Council (ERC, CurveArithmetic, 101078157).

\section{Notation and background}\label{sec: notation and background}

\subsection{Chow groups}
Let $k$ be field. A variety is by definition a separated scheme of finite type over $k$. We say a variety is nice if it is smooth, projective and geometrically integral. 
If $X$ is a smooth and geometrically integral variety and $p\in \{0,\dots,\dim X\}$, let $\CH^p(X)$ denote the Chow group (with $\Z$-coefficients) of codimension $p$ cycles modulo rational equivalence.
If $Z\subset X$ is a closed subscheme of codimension $p$, we denote its class in $\CH^p(X)$ by $[Z]$ (using \cite[\href{https://stacks.math.columbia.edu/tag/02QS}{Tag 02QS}]{stacks-project} if $Z$ is not integral).

If $X$ is additionally projective, then $\CH^p(X)$ has a filtration by subgroups
\begin{align*}
    \CH^p(X)_{\mathrm{alg}} \subset \CH^p(X)_{\mathrm{hom}} \subset \CH^p(X),
\end{align*}
where $\CH^p(X)_{\mathrm{alg}}$ is the subgroup of algebraically trivial cycles (in the sense of \cite[\S3.1]{ACMV-parameterspacesalgebraic}) and $\CH^p(X)_{\mathrm{hom}}$ the subgroup of homologically trivial cycles (with respect to a fixed Weil cohomology theory for nice varieties over $k$).
The Griffiths group is by definition $\Gr^p(X) = \CH^p(X)_{\mathrm{hom}}/\CH^p(X)_{\mathrm{alg}}$.
We occasionally write $\CH_p(X) = \CH^{\dim X -p}(X)$ and $\Gr_p(X) = \Gr^{\dim X -p}(X)$.
If $R$ is a ring, we write $\CH^p(X)_R= \CH^p(X)\otimes_{\Z} R$ and $\Gr^p(X)_R = \Gr^p(X)\otimes_{\Z} R$.

\subsection{Base change, specialization and families}

We state three lemmas concerning operations on cycles.
These seem to be standard, but we could not locate proofs in the literature.


\begin{lemma}\label{lemma: base extension chow groups}
    Let $X/k$ be a nice variety and $K/k$ a (not necessarily finite) extension of fields.
    \begin{enumerate}
        \item The base change maps $\CH^p(X)_\Q \rightarrow \CH^p(X_K)_\Q$ and $\Gr^p(X)_\Q\rightarrow \Gr^p(X_K)_\Q$ are injective.
        \item If in addition $k$ is algebraically closed, the base change maps $\CH^p(X) \rightarrow \CH^p(X_K)$ and $\Gr^p(X)\rightarrow \Gr^p(X_K)$ are injective.
    \end{enumerate}
\end{lemma}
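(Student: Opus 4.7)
The plan is to combine two standard operations on Chow groups: proper pushforward along a finite flat cover (which gives injectivity with $\Q$-coefficients for finite extensions) and the refined Gysin pullback at a regular closed point of a model of $K$ (which reduces the case of an arbitrary extension to that of a finite one). The same argument will handle Griffiths groups, since base change, proper pushforward, and refined Gysin pullback each preserve algebraic and homological equivalence.

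First I would reduce to the case $K/k$ finitely generated. Any cycle on $X_K$ and any rational, algebraic, or homological equivalence between such cycles involves only finitely many elements of $K$, so $\CH^p(X_K) = \varinjlim_L \CH^p(X_L)$ and similarly for $\Gr^p$, with $L$ ranging over finitely generated subextensions of $K/k$. Hence if $\alpha_K = 0$, then $\alpha_L = 0$ for some such $L$. Write $L = k(Y)$ for an integral $k$-variety $Y$ of finite type, and replace $Y$ by its regular locus (which is open and nonempty, since finite-type $k$-schemes are excellent). Pick a closed point $y \in Y$; then $\kappa(y)/k$ is finite. After shrinking to an affine open $U \ni y$, the vanishing $\alpha_L = 0$ promotes to the vanishing of $\pi^*\alpha$ in $\CH^p(X_U)_\Q$, where $\pi : X_U \to X$ is the projection. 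Since $y$ is regular in $Y$, the closed immersion $\iota_y : X_{\kappa(y)} \hookrightarrow X_U$ is regular of the same codimension, and the refined Gysin pullback $\iota_y^!$ is defined; the compatibility $\iota_y^! \circ \pi^* = (\pi \circ \iota_y)^*$ from Fulton's intersection theory then yields $\alpha_{\kappa(y)} = 0$ in $\CH^p(X_{\kappa(y)})_\Q$, since $\pi \circ \iota_y : X_{\kappa(y)} \to X$ is the base change morphism.

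To conclude (1), the composition of base change with proper pushforward along the finite flat morphism $X_{\kappa(y)} \to X$,
\[\CH^p(X)_\Q \longrightarrow \CH^p(X_{\kappa(y)})_\Q \longrightarrow \CH^p(X)_\Q,\]
equals multiplication by $[\kappa(y) : k]$, forcing $\alpha = 0$. For (2), when $k$ is algebraically closed one chooses $y \in Y(k)$ (which is dense in the regular locus by the Nullstellensatz), so $\kappa(y) = k$ and $\alpha_{\kappa(y)} = \alpha = 0$ directly in $\CH^p(X)$, yielding the $\Z$-coefficient version without invoking the trace. The main technical inputs are the compatibility $\iota_y^! \circ \pi^* = (\pi \circ \iota_y)^*$ and the preservation of algebraic and homological equivalence under $\iota_y^!$ and proper pushforward; both are classical properties of refined Gysin pullback, so I do not expect any real obstacle beyond these standard facts.
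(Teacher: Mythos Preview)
Your proposal is correct and follows essentially the same strategy as the paper's proof: reduce to a finitely generated subextension by a direct-limit argument, spread the vanishing out over an integral model, and then specialize at a closed point (a $k$-point for (2), a point with finite residue field followed by the degree-$[\kappa(y):k]$ pushforward trick for (1)). The paper organizes the two parts slightly differently---proving (2) first and deducing (1) by factoring the extension through $\bar{k}$---but the mechanism is the same; one small point to tidy up in your write-up is to perform the spreading-out shrink \emph{before} selecting the closed point $y$, since the open $U$ on which $\pi^*\alpha$ vanishes need not contain a pre-chosen $y$.
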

\begin{proof}
    The proof is an adaptation of \cite[Lemma 1A.3, p.\ 22]{bloch-lecturesonalgebraiccycles}.
    We first prove $(2)$ for $\CH^p(X)\rightarrow \CH^p(X_K)$.
    Suppose $\alpha\in \CH^p(X)$ has trivial image in $\CH^p(X_K)$.
    Then $\alpha$ already has trivial image in $\CH^p(X_{K'})$, where $K'\subset K$ is a subfield that is finitely generated over $k$, since the data witnessing triviality in $\CH^p(X_K)$ can be defined over such a subfield.
    By spreading out, we can find a smooth integral variety $U/k$ with function field $K'$ such that $\alpha$ has trivial image in $\CH^p(X\times_k U)$.
    Since $k$ is algebraically closed, there exists a $k$-point $u\in U(k)$. 
    Pulling back along $u$ defines a left-inverse $\CH^p(X\times_k U) \rightarrow \CH^p(X)$ to the map $\CH^p(X) \rightarrow \CH^p(X\times_k U)$.
    It follows that $\alpha$ is trivial in $\CH^p(X)$, as desired. 
    The argument for $\Gr^p(X)$ is identical and omitted.

    We now prove $(1)$ for $\CH^p(X)_{\Q}\rightarrow \CH^p(X_K)_{\Q}$. There exists a field $L$ containing both $K$ and an algebraic closure $\bar{k}$ of $k$. It therefore suffices to prove the two base change maps $\CH^p(X)_\Q \rightarrow \CH^p(X_{\bar{k}})_\Q \rightarrow \CH^p(X_L)_{\Q}$ are both injective. The first one follows from the fact that for a finite extension $k'/k$ the pushforward map $\CH^p(X_{k'}) \rightarrow \CH^p(X)$, when precomposed with the base change map, is multiplication by $[k':k]$. 
    The second follows from Part $(2)$.
    The case of $\Gr^p(X)_{\Q}$ is again analogous.
\end{proof}

To discuss specialization in the next lemma, let $R$ be a discrete valuation ring with fraction field $K$ and residue field $k$.
Let $X\rightarrow \Spec (R)$ be a smooth, projective morphism with geometrically integral fibers, so the generic and special fibers $X_K$ and $X_k$ are nice varieties over $K$ and $k$ respectively. 
In this setting, Fulton has defined \cite[\S20.3]{Fulton-intersectiontheory} a specialization morphism $\mathrm{sp}\colon \CH^p(X_K) \rightarrow \CH^p(X_k)$ for every $0\leq p\leq \dim(X_K)$.
It has the property that if $Z\subset X$ is a closed integral subscheme of codimension $p$, flat over $R$, then $\mathrm{sp}([Z_K]) = [Z_k]$.

\begin{lemma}\label{lemma: specialization morphism}
    In the above notation, $\mathrm{sp}$ sends $\CH^p(X_K)_{\mathrm{alg}}\otimes \Q$ to $\CH^p(X_k)_{\mathrm{alg}}\otimes \Q$.
\end{lemma}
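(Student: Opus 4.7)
The plan is to spread the algebraic-triviality data out to $\Spec R$ and invoke Fulton's compatibilities of the specialization homomorphism with proper pushforward and Gysin pullback along regular closed immersions \cite[\S20.3]{Fulton-intersectiontheory}. Since we work with $\Q$-coefficients, Lemma \ref{lemma: base extension chow groups} lets us freely enlarge $R$: at any point we may replace $R$ by the valuation ring $R'$ of a finite extension $K'/K$ (with residue field $k'$), and descend the conclusion from $k'$ back to $k$ at the end.

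By the standard description of algebraic triviality (see e.g.\ \cite[Example 10.3.2]{Fulton-intersectiontheory}), it suffices to treat a generator $\alpha = \Gamma_*([t_1]-[t_0])$, with $T$ a smooth projective geometrically connected curve over $K$, $t_0, t_1 \in T(K)$, and $\Gamma \in \CH^p(T\times_K X_K)$. By Deligne--Mumford semistable reduction, after passing to such a finite extension the curve $T_{K'}$ admits a proper regular model $\mathcal{T}\to \Spec R'$ with reduced nodal special fibre $\mathcal{T}_{k'}$; the sections $\tilde\tau_i \in \mathcal{T}(R')$ extending $t_i$ exist by properness and, after blowing up the finitely many nodes through which they may pass, can be arranged to reduce to points $\bar\tau_i$ in the smooth locus of $\mathcal{T}_{k'}$. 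Extend $\Gamma_{K'}$ to a class $\tilde\Gamma \in \CH^p(\mathcal{T}\times_{R'} X_{R'})$ by closing up the components of its support. The sections $\tilde\sigma_i\colon X_{R'} \hookrightarrow \mathcal{T}\times_{R'} X_{R'}$, $x \mapsto (\tilde\tau_i, x)$, are then regular closed immersions of codimension one, and combining Fulton's specialization--pushforward and specialization--Gysin compatibilities yields
\[
\mathrm{sp}_{R'}(\alpha_{K'}) \;=\; \bar\Gamma_*\bigl([\bar\tau_1]-[\bar\tau_0]\bigr) \;\in\; \CH^p(X_{k'}),
\]
where $\bar\Gamma = \mathrm{sp}(\tilde\Gamma)$.

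If $\bar\tau_0$ and $\bar\tau_1$ lie on a single irreducible component $Y\subset \mathcal{T}_{k'}$---a smooth projective connected curve over $k'$---then $(\bar\Gamma|_{Y\times X_{k'}})_*([\bar\tau_1]-[\bar\tau_0])$ is algebraically trivial on $X_{k'}$ by definition. In general one joins $\bar\tau_0$ to $\bar\tau_1$ by a path in the dual graph of $\mathcal{T}_{k'}$ and writes the class as a telescoping sum along the path, reducing to a single-component case on each edge. The main technical obstacle is then to show that at each node $p$ traversed by the chain, the two ``branch evaluations'' of $\bar\Gamma$ at $p$ differ by an algebraically trivial cycle on $X_{k'}$; one handles this by passing to an \'etale neighbourhood of $p$, which by regularity of $\mathcal{T}$ is of the form $\Spec R'[x,y]/(xy-\pi)$, and deforming the two branches of $\mathcal{T}_{k'}$ into the smooth generic curve $\{xy=\pi\}$, along which the two evaluations are rationally equivalent. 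Once algebraic triviality over $X_{k'}$ is established, Lemma \ref{lemma: base extension chow groups}(1), combined with the compatibility of specialization with base change of DVRs (up to a ramification index which is invertible in $\Q$), descends the conclusion to $X_k$.
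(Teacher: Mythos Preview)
Your overall strategy—spread out to a regular model of the parameter curve, compute the specialization via Fulton's compatibilities, and telescope along the components of the special fibre—matches the paper's. The gap is in how you extend the cycle and what happens at the nodes.

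You extend $\Gamma$ to $\tilde\Gamma$ by taking closures, which makes $\tilde\Gamma$ flat over $\Spec R'$ but not over $\mathcal{T}$. As you correctly flag, this leaves a discrepancy at each node $p$: the Gysin fibre of $\bar\Gamma$ at $p$ computed via one branch need not agree with that computed via the other (indeed $\{p\}\hookrightarrow\mathcal{T}_{k'}$ is not a regular embedding, so there is no single canonical fibre). Your proposed fix—``deforming the two branches into the smooth generic curve $\{xy=\pi\}$''—does not work as stated. Two $K'$-points on the affine curve $\{xy=\pi\}\cong\mathbb{G}_{m,K'}$ are rationally equivalent as zero-cycles on $\mathbb{G}_m$, but this does not force their $\Gamma$-images in $X_{K'}$ to be rationally equivalent, since the projection $U_{K'}\times X_{K'}\to X_{K'}$ is not proper and hence the correspondence action from $\CH_0(U_{K'})$ is not defined. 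If instead you pass back to the proper curve $T$, the difference of the two evaluations is only \emph{algebraically} trivial on $X_{K'}$, and concluding algebraic triviality of its specialization is precisely the lemma you are proving. So the node step is either incomplete or circular.

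The paper avoids the node problem altogether by changing how the cycle is extended. Rather than taking closures, it reinterprets the flat family $Z\subset C\times_K X_K$ as a morphism $C\to\mathrm{Hilb}_{X/R}$ and, using regularity of the surface model and projectivity of $\mathrm{Hilb}^{\phi}_{X/R}$, extends this morphism after finitely many blowups to all of $\widetilde{\mathcal{C}}$. The resulting $\mathcal{Z}\subset\widetilde{\mathcal{C}}\times_R X$ is flat over $\widetilde{\mathcal{C}}$, so the scheme-theoretic fibre $\mathcal{Z}_q$ is well defined at \emph{every} point of $\widetilde{\mathcal{C}}_k$. Pulling $\mathcal{Z}_k$ back along a chain of nice curves connecting $\bar t_0$ to $\bar t_1$, the intermediate fibres in the telescoping sum cancel on the nose, and each remaining term is algebraically trivial by definition. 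The Hilbert-scheme trick is the idea your argument is missing.
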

\begin{proof}
    Let $C/K$ be a nice curve with $K$-points $t_0, t_1\in C(K)$ and let $Z\subset (X_K)\times_K C$ be an integral closed subscheme of codimension $p$, flat over $C$.
    By \cite[Theorem 1]{ACMV-parameterspacesalgebraic} and the remarks thereafter, it suffices to prove that $\mathrm{sp}([Z_{t_0}] - [Z_{t_1}]) \in \CH^p(X_k)_{\mathrm{alg}} \otimes \Q$ for each such tuple $(C, t_0,t_1, Z)$.
    
    Let $\mathcal{C}\rightarrow \Spec(R)$ be an integral, projective, flat, proper and regular model of $C$, which exists by \cite[Proposition 10.1.8]{liu-algebraicgeometryarithmeticcurves}.
    Let $\mathrm{Hilb}_{X/R}\rightarrow \Spec(R)$ be the Hilbert scheme of the projective morphism $X\rightarrow \Spec(R)$. 
    The closed subscheme $Z$ determines a $K$-morphism $\alpha\colon C\rightarrow \mathrm{Hilb}_{X/R}$.
    Since $C$ is connected, the image of $\alpha$ is contained in an open and closed subscheme $\mathrm{Hilb}_{X/R}^{\phi}$ with fixed Hilbert polynomial.
    The surface $\mathcal{C}$ is regular, $\mathrm{Hilb}^{\phi}_{X/R}\rightarrow \Spec(R)$ is projective \cite[Corollary (2.8)]{AltmanKleiman-compactifyingpicardscheme} and we may view $\alpha$ as a rational map $\mathcal{C}\dashrightarrow \mathrm{Hilb}_{X/R}^{\phi}$.
    Therefore by \cite[Theorem 9.2.7]{liu-algebraicgeometryarithmeticcurves} there exists a birational map $\widetilde{\mathcal{C}} \rightarrow \mathcal{C}$, obtained by successively blowing up closed points in the special fiber, and an $R$-morphism $\tilde{\alpha}\colon \widetilde{\mathcal{C}} \rightarrow \mathrm{Hilb}^{\phi}_{X/R}$ with generic fiber $\alpha$.
    In other words, there exists a closed subscheme $\mathcal{Z} \subset \widetilde{\mathcal{C}} \times_R X$, flat over $\widetilde{\mathcal{C}}$, whose generic fiber equals $Z$.
    
    The $K$-points $t_i\in C(K)$ extend to $R$-points $\tilde{t}_i\colon \Spec(R) \rightarrow \widetilde{\mathcal{C}}$ whose reductions $\bar{t}_i\in \widetilde{\mathcal{C}}_k(k)$ land in the smooth locus of $\widetilde{\mathcal{C}}_k$.
    The closed subschemes $\mathcal{Z}_{\tilde{t}_i}\subset X$ are flat over $R$.
    Therefore $\mathrm{sp}([Z_{t_0}]-[Z_{t_1}]) = [\mathcal{Z}_{\bar{t}_0}] - [\mathcal{Z}_{\bar{t}_1}]$.
    
    By the Zariski connectedness theorem, the special fiber $\widetilde{\mathcal{C}}_k$ is connected. 
    Consequently, by resolving irreducible components of $\widetilde{\mathcal{C}}_k$, we can connect $\bar{t}_0$ to $\bar{t}_1$ by a sequence of nice curves: there exists a finite extension $k'/k$, a collection of nice curves $D_1, \dots,D_n$ over $k'$, and for each $1\leq i \leq n$ a pair of points $s_{i,1}, s_{i,2}\in D_i(k')$ and a morphism $\varphi_i\colon D_i \rightarrow \mathcal{C}_{k'}$ such that $\varphi_1(s_{1,1}) = \bar{t}_1$, $\varphi_i(s_{i,2}) = \varphi_{i+1}(s_{i+1,1})$ for all $1\leq i\leq n-1$ and $\varphi_{n}(s_{n,2}) = \bar{t}_2$.
    Letting $W^{(i)}$ be the pullback of $\mathcal{Z}_k$ along $\varphi_i$, 
    we have
    \begin{align*}
        \mathrm{sp}([Z_{t_0}]-[Z_{t_1}])_{k'} = ([W^{(1)}_{s_{1,1}}]- [W^{(1)}_{s_{1,2}}]) + 
        \cdots + ([W^{(n)}_{s_{n,1}}]- [W^{(n)}_{s_{n,2}}]).
    \end{align*}
    Since each term $([W^{(i)}_{s_{i,1}}]- [W^{(i)}_{s_{i,2}}])$ lies in $\CH^p(X_{k'})_{\mathrm{alg}}\otimes \Q$ by definition of algebraic triviality, the same is true for their sum. 
    Taking the pushforward along $X_{k'} \rightarrow X_k$, we see that $[k':k] \cdot \mathrm{sp}([Z_{t_0}]-[Z_{t_1}]) \in \CH^p(X_k)_{\mathrm{alg}}$, as desired.
\end{proof}

\begin{remark}
    {\em
    The proof shows that if $K$ is perfect and $k$ is algebraically closed, then $\mathrm{sp}$ even sends $\CH^p(X_K)_{\mathrm{alg}}$ to $\CH^p(X_k)_{\mathrm{alg}}$.
    }
\end{remark}

\begin{lemma}\label{lemma: locus of rational triviality countable union}
    Let $X\rightarrow S$ be a smooth proper morphism of smooth varieties over a field $k$.
    Let $\alpha$ be a codimension $p$ cycle on $X$.
    Then the locus of points $s\in S$ such that the (Gysin) fiber $\alpha_s\in \CH^p(X_s)_{\Q}$ is zero (resp.\ lies in $\CH^p(X_s)_{\mathrm{alg}, \Q}$) is a countable union of closed algebraic subvarieties of $S$.
\end{lemma}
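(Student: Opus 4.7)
The plan is to exhibit each vanishing locus as a countable union of Zariski closures of constructible images of certain Hilbert-scheme parameter spaces in $S$, using the specialization and base-change lemmas already in hand to check that these closures still lie in the vanishing locus.

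Consider first the rational triviality case, i.e.\ the locus $V^{\mathrm{rat}}_{\Q} := \{s : \alpha_s = 0 \in \CH^p(X_s)_{\Q}\}$. This condition is equivalent to the existence of an integer $N\geq 1$ and effective codimension-$p$ cycles $W^+, W^-$ on $X_s\times \P^1$, each flat over $\P^1$, satisfying the balanced equation $[W^+_0] + [W^-_\infty] + N\alpha^-_s = [W^+_\infty] + [W^-_0] + N\alpha^+_s$, where we write $N\alpha = \alpha^+ - \alpha^-$ as a difference of effective cycles on $X$. For each triple $(N,\phi^+,\phi^-)$ consisting of an integer and two Hilbert polynomials (computed with respect to a chosen relatively very ample line bundle on $X/S$), such witnesses are parametrized by a locally closed subscheme $\mathcal{T}^{N,\phi^\pm}$ of the relative Hilbert scheme $H^{\phi^+}_{X\times\P^1/S}\times_S H^{\phi^-}_{X\times\P^1/S}$, which is projective over $S$: flatness over $\P^1$ is an open condition, and the cycle identity cuts out a closed subscheme of the flat locus via the diagonal of the appropriate relative Chow variety of $X/S$.

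By Chevalley, the image $E^{N,\phi^\pm}\subset S$ of $\mathcal{T}^{N,\phi^\pm}$ is constructible, so its Zariski closure $V^{N,\phi^\pm}$ is a closed algebraic subvariety of $S$, and tautologically $V^{\mathrm{rat}}_{\Q} = \bigcup_{N,\phi^\pm} E^{N,\phi^\pm} \subseteq \bigcup V^{N,\phi^\pm}$. For the reverse inclusion, given $s \in V^{N,\phi^\pm}$, I would choose a trait $\Spec(R)\to S$ whose generic point lands in $E^{N,\phi^\pm}$ and whose closed point maps to $s$, setting $K=\Frac(R)$. Then $\alpha_K = 0$ in $\CH^p(X_K)_{\Q}$, so by the standard compatibility $\alpha_{k(R)} = \mathrm{sp}(\alpha_K)$ of the Gysin fiber with Fulton's specialization morphism one deduces $\alpha_{k(R)} = 0$ in $\CH^p(X_{k(R)})_{\Q}$; applying Lemma~\ref{lemma: base extension chow groups}(1) to the inclusion $k(s) \subset k(R)$ then forces $\alpha_s = 0$.

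The algebraic triviality case is structurally identical. Witnesses for $N\alpha_s \in \CH^p(X_s)_{\mathrm{alg}}$ are finite tuples $(C_i,t_{i,0},t_{i,1},Z_i)$ with $C_i$ a smooth projective connected curve, $t_{i,j}\in C_i$ points, and $Z_i\subset X_s\times C_i$ a codimension-$p$ cycle flat over $C_i$, satisfying $\sum_i([Z_{i,t_{i,0}}]-[Z_{i,t_{i,1}}]) = N\alpha_s$; for fixed discrete data (number of summands, genera, Hilbert polynomials) these are parametrized by a quasi-projective $S$-scheme built from products of relative Hilbert schemes over universal curves equipped with two marked sections. The closure-and-specialization argument then proceeds as above, now invoking Lemma~\ref{lemma: specialization morphism} in place of Fulton's basic compatibility to propagate algebraic triviality from the generic to the closed point of the trait. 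The main technical obstacle, I expect, is verifying that the cycle identity defines a closed subscheme of the flat locus of the relative Hilbert scheme; cleanly, this uses the cycle-class morphism from Hilbert to Chow schemes, since equality of effective fiber cycles of a fixed Chow type is precisely the pullback of the diagonal of the (projective) relative Chow variety.
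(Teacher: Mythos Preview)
Your argument is correct, but the paper takes a genuinely different and much shorter route. Rather than parametrizing witnesses, the paper descends everything to a countable subfield $k_0\subset k$: choose $k_0$ so that $X\to S$ and $\alpha$ are already defined over $k_0$ (as $X_0\to S_0$, $\alpha_0$), observe that a finite-type scheme over a countable field has only countably many integral closed subschemes, and let $\mathcal{F}$ be the set of those $Z\subset S_0$ for which $\alpha_0$ vanishes (resp.\ is algebraically trivial) at the generic point $\eta(Z)$. The vanishing locus in $S$ is then exactly $\bigcup_{Z\in\mathcal{F}} Z_k$, and the two inclusions are checked using Lemma~\ref{lemma: base extension chow groups} (for the map $S\to S_0$) and specialization along $Z$ (resp.\ Lemma~\ref{lemma: specialization morphism}). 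The closure-via-specialization step is thus identical to yours; the difference lies entirely in how the countable index set is produced.

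Your Hilbert/Chow-scheme approach has the advantage of being explicit: the closed pieces $V^{N,\phi^\pm}$ come with geometric meaning, and the parameter spaces $\mathcal{T}^{N,\phi^\pm}$ could in principle be analyzed further. The cost is exactly the technical overhead you identify at the end (flatness loci, the Hilbert-to-Chow morphism, and the more delicate bookkeeping for algebraic-triviality witnesses over moduli of pointed curves). The paper's countable-subfield trick sidesteps all of this in one line, at the price of giving no information whatsoever about the individual components. For the purposes of this paper, where the lemma is used only as a black box (in Lemma~\ref{lemma: Zg,barZg countable union}), the shorter argument suffices.
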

\begin{proof}
    There exists a countable subfield $k_0\subset k$, a smooth proper morphism $X_0\rightarrow S_0$ of varieties over $k_0$ and a cycle $\alpha_0$ on $X_0$ whose base change to $k$ are $X\rightarrow S$ and $\alpha$ respectively. 
    Since $k_0$ is countable, $S_0$ has only countably many closed subschemes. 
    Let $\mathcal{F}$ be the collection of integral closed subschemes $Z\subset S_0$ such that $\alpha_0$ is zero in $\CH^p(X_{0,\eta(Z)})_{\Q}$, where $\eta(Z)\in S_0$ denotes the generic point of $Z$.
    Then we claim that the locus of $s\in S$ for which $\alpha_s\in \CH^p(X_s)_{\Q}$ is zero is exactly the union $\bigcup_{Z\in \mathcal{F}} Z_k \subset S$.
    This follows from Lemmas \ref{lemma: base extension chow groups} and properties of the specialization map; we omit the details, and the similar argument for algebraic triviality.
\end{proof}

\subsection{Chow motives}\label{subsec: chow motives}
We recall a few relevant facts about the category  $\mathsf{Mot}(k)$ of (pure, contravariant) Chow motives $M$ over $k$ with $\Q$-coefficients; see \cite{Scholl-classicalmotives} or \cite[\S2]{MurreNagelPeters} for basic definitions. 

Denote the Lefschetz motive by $\mathbb{L}$, its $n$th tensor power by $\mathbb{L}^n$ and write $M(n) = M\otimes \mathbb{L}^n$.
Following \cite[\S2.5]{MurreNagelPeters}, the Chow group in codimension $p$ of $M$ is by definition $\CH^p(M) = \Hom_{\mathsf{Mot}(k)}(\mathbb{L}^{p}, M)$.
If $M = \frak{h}(X)$ where $X$ is smooth projective over $k$, then $\CH^p(M) = \CH^p(X)_{\Q}$. (Beware that we need to take $\Q$-coefficients on the right hand side.)
If $C$ is a nice curve over $k$ and $\varphi\colon \frak{h}(C)\rightarrow M(p-1)$ a morphism in $\mathsf{Mot}(k)$, we obtain a homomorphism of abelian groups $\CH^1(\varphi)\colon \CH^1(C)_{\Q} \rightarrow \CH^p(M)$.
We define $\CH^p(M)_{\mathrm{alg}}$ to be the union of the images of $\CH^1(\varphi)$ ranging over all pairs $(C, \varphi)$ as above.
This coincides with $\CH^p(X)_{\mathrm{alg}} \otimes \Q$ when $M = \frak{h}(X)$, see \cite[Corollary 3.13]{ACMV-parameterspacesalgebraic}.

Finally, we define motives of fixed points. 
If $G$ is a finite group acting on a motive $M$, write $M^G$ for the submotive cut out by the idempotent $\frac{1}{\#G}\sum_{g \in G} g_* \in \End(M)$. 
We have $\CH^p(M^G) = \CH^p(M)^G$.
If $G$ acts on a nice variety $X$, then $G$ acts on $\frak{h}(X)$ and $\CH^p(X)$, and $\CH^p(\frak{h}(X)^G) = \CH^p(X)^G\otimes {\Q}$.

\subsection{Chow--K\"unneth decomposition for abelian varieties}\label{subsec: chow kunneth decomposition abelian vars}

Let $A/k$ be a $g$-dimensional abelian variety.
Deninger and Murre \cite[\S3]{DeningerMurre} have constructed a canonical Chow--K\"unneth decomposition 
\begin{align}\label{equation:deninger-murre decomposition}
 \frak{h}(A) = \bigoplus_{i = 0}^{2g}\frak{h}^i(A),   
\end{align}
uniquely characterized by the following property: if $(n)\colon A\rightarrow A$ denotes the multiplication-by-$n$, then $(n)^*$ acts on $\frak{h}^i(A)$ via $n^i$ for every integer $n$. 
On the other hand, Beauville \cite{Beauville-surlanneadechow} has shown that there exists a direct sum decomposition $\CH^p(A)_\Q = \bigoplus_{s = p-g}^p \CH^p_{(s)}(A)$, where 
\begin{align}\label{equation: beauville decomposition}
\CH^p_{(s)}(A) = \{\alpha \in \CH^p(A)_\Q \colon (n)^*\alpha = n^{2p-s} \alpha \quad \forall n\in \Z\}.
\end{align}
The two decompositions are linked by the formula $\CH^p(\frak{h}^i(A)) = \CH^p_{(2p-i)}(A)$.
Beauville conjectured that $\CH^p_{(s)}(A) = 0$ when $s<0$, and he proved it when $p \in \{0,1,g-2,g-1,g\}$ \cite[Proposition 3(a)]{Beauville-surlanneadechow}.

\begin{example}
    {\em 
    For $p = 1$, the Beauville decomposition $\CH^1(A)_\Q = \CH^1_{(0)}(A) \oplus \CH^1_{(1)}(A)$ is the decomposition of a divisor class into symmetric and anti-symmetric classes. 
    }
\end{example}

\begin{lemma}\label{lemma: Griffiths group vanishes for h^1}
    If $A/k$ is an abelian variety, then $\CH^p(\frak{h}^1(A))_{\mathrm{alg}} = \CH^p(\frak{h}^1(A))$ for all $p\geq 0$.
\end{lemma}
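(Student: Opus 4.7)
The plan is to reduce to a direct computation using Beauville's decomposition, identifying $\CH^p(\frak{h}^1(A))$ with a specific eigenspace and observing that this eigenspace either vanishes or equals $\Pic^0(A)_\Q$.

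First I would invoke the identity $\CH^p(\frak{h}^1(A)) = \CH^p_{(2p-1)}(A)$ recorded in \S\ref{subsec: chow kunneth decomposition abelian vars}, together with the fact that Beauville's decomposition runs only over $p-g \leq s \leq p$. For $p \geq 2$ we have $2p-1 > p$, so $\CH^p(\frak{h}^1(A))$ vanishes outright. For $p = 0$, the same formula yields $\CH^0_{(-1)}(A)$, which also vanishes because $[n]^*$ acts as the identity on $\CH^0(A)_\Q = \Q$, so this entire group lies in the $s = 0$ summand. In both cases there is nothing to prove.

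The remaining case $p = 1$ is the only substantive one. Here $\CH^1(\frak{h}^1(A)) = \CH^1_{(1)}(A)$ is the anti-symmetric part of the divisor class group, which coincides with $\Pic^0(A)_\Q$. The Néron--Severi theorem gives $\Pic^0(A) \subseteq \CH^1(A)_{\mathrm{alg}}$, and the coincidence of notions noted at the end of \S\ref{subsec: chow motives} translates this to $\Pic^0(A)_\Q \subseteq \CH^1(\frak{h}(A))_{\mathrm{alg}}$. To push this inclusion through the canonical Deninger--Murre projector $\pi \colon \frak{h}(A) \twoheadrightarrow \frak{h}^1(A)$, I would appeal to functoriality of the motivic algebraically-trivial subgroup: any correspondence $\varphi \colon \frak{h}(C) \to \frak{h}(A)$ witnessing $\alpha = \CH^1(\varphi)(\beta) \in \CH^1(\frak{h}(A))_{\mathrm{alg}}$ can be composed with $\pi$ to produce a morphism $\pi \circ \varphi \colon \frak{h}(C) \to \frak{h}^1(A) = \frak{h}^1(A)(0)$ realizing $\pi(\alpha)$ in $\CH^1(\frak{h}^1(A))_{\mathrm{alg}}$. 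Since $\pi$ restricts to the identity on the summand $\CH^1_{(1)}(A) = \Pic^0(A)_\Q$, we deduce $\Pic^0(A)_\Q \subseteq \CH^1(\frak{h}^1(A))_{\mathrm{alg}}$, and this already exhausts $\CH^1(\frak{h}^1(A))$.

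No step stands out as a serious obstacle: the argument is essentially bookkeeping inside the Beauville--Deninger--Murre formalism, with the Néron--Severi theorem as the only geometric input. If any point deserves care it is verifying that the projector $\pi$ genuinely transports the motivic algebraically trivial subgroup of $\frak{h}(A)$ into that of $\frak{h}^1(A)$, but this follows directly from the definition of $\CH^p(M)_{\mathrm{alg}}$ adopted in \S\ref{subsec: chow motives}.
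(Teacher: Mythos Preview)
Your proposal is correct and follows essentially the same approach as the paper: both reduce to the single nontrivial case $p=1$, identify $\CH^1(\frak{h}^1(A)) = \CH^1_{(1)}(A)$ with the anti-symmetric divisor classes (equivalently $\Pic^0(A)_\Q = \CH^1(A)_{\hom,\Q}$), and then invoke the coincidence of algebraic and homological equivalence for divisors. The paper is terser and begins by reducing to algebraically closed $k$ via Lemma~\ref{lemma: base extension chow groups}, whereas you spell out more carefully why the Deninger--Murre projector carries $\CH^1(\frak{h}(A))_{\mathrm{alg}}$ into $\CH^1(\frak{h}^1(A))_{\mathrm{alg}}$; neither difference is substantive.
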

\begin{proof}
    By Lemma \ref{lemma: base extension chow groups}(1), we may assume $k$ is algebraically closed.
    The only nonzero Chow group of $\frak{h}^1(A)$ is $\CH^1(\frak{h}^1(A)) = \CH^1_{(1)}(A)$, the set of anti-symmetric elements of $\CH^1(A)_{\Q}$.
    The lemma follows from the fact that $\CH^1_{(1)}(A) = \CH^1(A)_{\mathrm{hom}}\otimes \Q$ and that homological and algebraic equivalence coincide for codimension-$1$ cycles.
\end{proof}

\subsection{The Lefschetz decomposition for abelian varieties}\label{subsec: lefschetz decomposition abelian varieties}

Let $A/k$ be an abelian variety with polarization $\lambda\colon A \rightarrow A^{\vee}$.
Let $\ell \in \CH^1_{(0)}(A)$ be the unique class satisfying $2\ell = (1, \lambda)^* \mathcal{P}$, where $\mathcal{P}\in \Pic(A\times A^{\vee})$ is the Poincar\'e bundle.
Künnemann has shown \cite[Theorem 5.2]{kunnemann-lefschetzdecompositionchowmotives} that intersecting with $\ell^{g-i}$ induces an isomorphism $\frak{h}^i(A) \rightarrow \frak{h}^{2g-i}(A)(g-i)$ for all $0\leq i \leq g$.
This induces a Lefschetz decomposition of the Chow--Künneth components $\frak{h}^i(A)$, see \cite[Theorem 5.1]{kunnemann-lefschetzdecompositionchowmotives}.
We are chiefly interested in the components $\frak{h}^3(A)$ and $\frak{h}^{2g-3}(A)$ when $g\geq 2$; in that case the Lefschetz decomposition has the form $\frak{h}^3(A) = \frak{h}^3_{\mathrm{prim}}(A) \oplus \ell \cdot \frak{h}^1(A)$ and $\frak{h}^{2g-3}(A) = \frak{h}^{2g-3}_{\mathrm{prim}}(A) \oplus \ell^{g-2}\cdot  \frak{h}^1(A)$. 
It has the property that the isomorphism $\ell^{g-3}\colon \frak{h}^3(A) \rightarrow \frak{h}^{2g-3}(A)(g-3)$ is a direct sum of isomorphisms $\frak{h}^3_{\mathrm{prim}}(A) \rightarrow \frak{h}^{2g-3}_{\mathrm{prim}}(A)$ and $\ell\cdot \frak{h}^1(A) \rightarrow \ell^{g-2} \frak{h}^1(A)$.
Taking Chow groups, we get a decomposition $\CH^{g-1}(\frak{h}^{2g-3}(A)) = \CH^{g-1}(\frak{h}_{\mathrm{prim}}^{2g-3}(A))\oplus \CH^{g-1}(\ell^{g-2} \cdot \frak{h}^1(A))$.

\subsection{Beauville components of $C$}
Consider a nice curve $C$ of genus $g\geq 2$ over $k$ with Jacobian $J$.
Let $e$ be a degree-$1$ divisor on $C$ and embed $C$ in $J$ using the Abel--Jacobi map based at $e$, sending $x\in C$ to the divisor class of $x-e$.
Decompose $[C] = [C]_0 + \cdots + [C]_{g-1}$ with $[C]_s \in \CH_{(s)}^{g-1}(J)$. 
In this subsection we analyze the component $[C]_1$ more closely, whose vanishing is equivalent to the vanishing of $\kappa(C)$.
Proposition \ref{proposition: pontryagin product C1 calculation} can be viewed as a determination of the `imprimitive part' of $[C]_1$.

For $\alpha \in \CH_p(J)$ and $\beta \in \CH_q(J)$, the Pontryagin product $\alpha\star \beta$ is the pushforward of $\alpha\times \beta$ under the addition map $J\times J\rightarrow J$.  If $n$ is a positive integer let $\alpha^{\star n}$ be the $n$-fold Pontryagin product of $\alpha$ with itself. 
Let $K_C\in \CH_0(C)$ denote the canonical divisor class and let $x_e\in J(k)$ be the point corresponding to the degree-$0$ divisor class $[(2g-2)e]-K_C$.

\begin{proposition}\label{proposition: pontryagin product C1 calculation}
    We have an equality in $\CH_{(1)}^1(J)$:
    \begin{align}\label{equation: pontryagin product calculation}
        (2g-2) \cdot [C]_0^{\star(g-2)} \star [C]_1 = [C]^{\star(g-1)} \star ([0]-[x_e]).
    \end{align}
    Here we view $[0]-[x_e]$ as an element of $\CH_0(J)$.
\end{proposition}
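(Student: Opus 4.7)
My plan is to interpret both sides of \eqref{equation: pontryagin product calculation} as elements of $\CH^1(J)_\Q$ and match them via the Beauville decomposition together with the involution $[-1]\colon J\to J$. By Beauville's vanishing $\CH^1_{(s)}(J)=0$ for $s\geq 2$ recalled in \S\ref{subsec: chow kunneth decomposition abelian vars}, expanding $[C]^{\star(g-1)}=\bigl(\sum_s[C]_s\bigr)^{\star(g-1)}$ and keeping only the terms of total Beauville degree in $\{0,1\}$ yields
\[
[C]^{\star(g-1)} \;=\; [C]_0^{\star(g-1)} \;+\; (g-1)\,[C]_0^{\star(g-2)}\star[C]_1.
\]
Since $[-1]^*$ acts as $(-1)^s$ on $\CH^p_{(s)}$ and commutes with the Pontryagin product (because $[-1]$ is a group endomorphism, so $m\circ([-1]\times[-1])=[-1]\circ m$), subtracting $[-1]^*[C]^{\star(g-1)}$ kills the first summand and doubles the second, giving
\[
[C]^{\star(g-1)} \;-\; [-1]^*[C]^{\star(g-1)} \;=\; (2g-2)\,[C]_0^{\star(g-2)}\star[C]_1,
\]
which is exactly the left-hand side of \eqref{equation: pontryagin product calculation}.

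Next, Pontryagin product with the point class $[x_e]\in\CH_0(J)$ is translation $t_{x_e,*}$, so the right-hand side of \eqref{equation: pontryagin product calculation} equals $[C]^{\star(g-1)} - t_{x_e,*}[C]^{\star(g-1)}$. Proving the proposition therefore reduces to the single identity
\[
[-1]^*[C]^{\star(g-1)} \;=\; t_{x_e,*}[C]^{\star(g-1)}.
\]

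To establish this, I would invoke Poincar\'e's formula in the Chow group: the map $C^{g-1}\to J$, $(x_1,\ldots,x_{g-1})\mapsto \sum_i(x_i-e)$, factors through $\mathrm{Sym}^{g-1}C$, which for $g-1<g$ maps birationally onto its image $W_{g-1,e}\subset J$. Hence $[C]^{\star(g-1)} = (g-1)!\,[W_{g-1,e}]$, and it suffices to show $[-1]^*[W_{g-1,e}] = t_{x_e,*}[W_{g-1,e}]$. Under the identification $\Pic^{g-1}(C)\cong J$, $D\mapsto D-(g-1)e$, the Serre-duality involution $D\mapsto K_C-D$ preserves the classical theta divisor of $\Pic^{g-1}$ and, after translation, becomes the automorphism $y\mapsto K_C - 2(g-1)e - y = -x_e - y$, which factors as $[-1]\circ t_{x_e}$. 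Hence $W_{g-1,e}$ is preserved by $[-1]\circ t_{x_e}$ set-theoretically, and the identity follows on taking pullbacks and using $t_{-x_e}^* = t_{x_e,*}$.

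The main bookkeeping hazard is tracking signs and the direction of translation: keeping $t_{x,*}=t_{-x}^*$ straight, and confirming that the Serre-duality symmetry on $\Pic^{g-1}$ translates to shift by exactly $x_e = [(2g-2)e-K_C]$ (rather than its negative or a multiple) under $\Pic^{g-1}\cong J$. Beyond this no deeper input is needed.
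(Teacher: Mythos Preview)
Your proof is correct and follows essentially the same approach as the paper: both expand $[C]^{\star(g-1)}$ in the Beauville decomposition of $\CH^1(J)_\Q$, identify $[C]^{\star(g-1)}$ with $(g-1)!$ times a theta divisor, and use the Riemann--Roch symmetry $\Theta_{(g-1)e}\mapsto \Theta_{K_C-(g-1)e}$ to show $(-1)_*[C]^{\star(g-1)} = [x_e]\star[C]^{\star(g-1)}$. Your organization is slightly more streamlined, since by writing both sides directly as $[C]^{\star(g-1)}$ minus a transform of itself you avoid the paper's separate steps of checking that $[x_e]\star(-)$ acts trivially on $\CH^1_{(1)}(J)$ and that $[C]_0^{\star(g-1)}\star([0]-[x_e])=[C]^{\star(g-1)}\star([0]-[x_e])$.
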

\begin{proof}
    If $D$ is a degree $g-1$ divisor class on $C$, let $\Theta_{D}$ be the image of the map $\Sym^{g-1}(C) \rightarrow J$ defined by $x\mapsto [x]-D$.
    Then $[C]^{\star (g-1)} = (g-1)![\Theta_{(g-1)e}]$.
    By Riemann--Roch, 
    \[(-1)_*[\Theta_{(g-1)e}] = [\Theta_{K_C-(g-1)e}] =[x_e] \star [\Theta_{(g-1)e}].\]
    Combining the last two sentences shows that $(-1)_*([C]^{\star (g-1)}) = [x_e]\star [C]^{\star(g-1)}$.
    
    Since Pontryagin product sends $\CH^{g-p}_{(s)}(J) \times \CH^{g-q}_{(t)}(J)$ to $\CH^{g-p-q}_{(s+t)}(J)$ and since $\CH^1_{(s)}(J)\neq 0$ only if $s\in\{0,1\}$, we calculate that $[C]^{\star(g-1)} = [C]_0^{\star(g-1)} + (g-1) \cdot [C]_0^{\star(g-2)} \star [C]_1$.
    Applying $(-1)_*$ and $[x_e]\star$ to the previous identity, we obtain 
    \begin{align*}
        (-1)_*[C]^{\star(g-1)} &= [C]_0^{\star(g-1)} - (g-1) \cdot [C]_0^{\star(g-2)} \star [C]_1, \\
        [x_e]\star [C]^{\star(g-1)} &= [x_e]\star [C]_0^{\star(g-1)} + (g-1) \cdot [C]_0^{\star(g-2)} \star [C]_1. 
    \end{align*}
    Note that $[x_e]\star$ acts trivially on $\CH^1_{(1)}(J)$ since $([x_e]-[0])\in \oplus_{s\geq 1} \CH^g_{(s)}(J)$ by the explicit description of the Beauville decomposition for zero-cycles \cite[bottom of p.\ 649]{Beauville-surlanneadechow}.
    Equating the right hand sides of the two centered equations proves that $(2g-2) \cdot [C]_0^{\star(g-2)} \star [C]_1 = [C]_0^{\star(g-1)} \star ([0]-[x_e])$.
    Since $[C]_s^{\star(g-1)} \star ([0]-[x_e]) \in \oplus_{t\geq s+1} \CH^1_{(t)}(J) = \{0\}$ for all $s\geq 1$, it follows that $[C]_0^{\star(g-1)} \star ([0]-[x_e]) = [C]^{\star(g-1)} \star ([0]-[x_e])$, concluding the proof.
\end{proof}

\begin{corollary}\label{corollary: C1 vanishes implies (2g-2)e canonical}
    Suppose that $[C]_1=0$ in $\CH_1(J)_{\Q}$.
    Then $(2g-2)e=K_C$ in $\CH_0(C)_{\Q}$.
\end{corollary}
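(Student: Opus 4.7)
The plan is to substitute the hypothesis $[C]_1 = 0$ into Proposition \ref{proposition: pontryagin product C1 calculation}. This kills the left-hand side of \eqref{equation: pontryagin product calculation}, and since the proof of that proposition records $[C]^{\star(g-1)} = (g-1)!\, [\Theta_{(g-1)e}]$, we can divide by $(g-1)!$ to reduce to
\begin{align*}
[\Theta_{(g-1)e}] \star [x_e] = [\Theta_{(g-1)e}] \quad \text{in } \CH^1(J)_{\Q}.
\end{align*}

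The next step is to recognize Pontryagin product with the point $[x_e]$ as translation pushforward $t_{x_e,*}$, equivalently the pullback $t_{-x_e}^*$ (since translation is an isomorphism). The identity above then reads $t_{-x_e}^*[\Theta_{(g-1)e}] = [\Theta_{(g-1)e}]$ in $\CH^1(J)_{\Q}$. By the theorem of the square, the difference $t_{-x_e}^*[\Theta_{(g-1)e}] - [\Theta_{(g-1)e}]$ lies in $\CH^1_{(1)}(J)_{\Q}$ and corresponds, under the isomorphism $\CH^1_{(1)}(J)_{\Q} \simeq J^{\vee}(k) \otimes \Q$, to the image of $-x_e$ under the principal polarization $\lambda \colon J \to J^\vee$ determined by $\Theta_{(g-1)e}$. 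Since $\lambda$ is an isomorphism, the vanishing of this image forces $x_e$ to be torsion in $J(k)$. As $x_e$ is by construction the point of $J(k) \simeq \Pic^0(C)$ represented by the degree-zero divisor class $(2g-2)e - K_C$, this torsion condition is equivalent to $(2g-2)e = K_C$ in $\CH_0(C)_{\Q}$.

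The argument is a direct manipulation once Proposition \ref{proposition: pontryagin product C1 calculation} is available, so no serious obstacle is expected; the only point requiring care is the sign bookkeeping relating $t_y^*$, $t_{y,*}$, and the polarization homomorphism.
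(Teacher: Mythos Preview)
Your proposal is correct and follows essentially the same route as the paper: both substitute $[C]_1=0$ into Proposition~\ref{proposition: pontryagin product C1 calculation}, use $[C]^{\star(g-1)}=(g-1)!\,[\Theta_{(g-1)e}]$, and then observe that $[\Theta_{(g-1)e}]\star([x_e]-[0])$ is (up to sign) the image of $x_e$ under the principal polarization $J(k)\to \CH^1(J)_{\hom}$, so its torsion-vanishing forces $x_e$ to be torsion. The only difference is cosmetic: you phrase the last step via $t_{-x_e}^*$ and the theorem of the square, while the paper writes the polarization map directly as $x\mapsto [\Theta_{(g-1)e}]\star([x]-[0])$.
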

\begin{proof}
    If $[C]_1=0$, then $[C]^{\star(g-1)} \star([0]-[x_e])=0$ by \eqref{equation: pontryagin product calculation}. 
    On the other hand, $[C]^{\star (g-1)} = (g-1)![\Theta_{(g-1)e}]$ is multiple of a theta divisor, in the notation of the proof of Proposition \ref{proposition: pontryagin product C1 calculation}.
    Since $\Theta_{(g-1)e}$ defines a principal polarization, the map $x\mapsto [\Theta_{(g-1)e + x}] - [\Theta_{(g-1
    )e}] = [\Theta_{(g-1)e}]\star ([x]-[0])$ induces an isomorphism $\varphi\colon J(k)\rightarrow \CH^1(J)_{\hom}$.
    Since $\varphi(x_e)$ is torsion, it follows that $x_e\in J(k)$ is itself torsion, as desired.
\end{proof}

The principal polarization defines an ample class $\ell\in \CH_{(1)}^1(J)$, which induces a Lefschetz decomposition $\frak{h}^{2g-3}(J) = \frak{h}^{2g-3}_{\mathrm{prim}}(J) \oplus \ell^{g-2}\cdot \frak{h}^1(J)$ as in \S\ref{subsec: lefschetz decomposition abelian varieties}.

\begin{corollary}\label{corollary: C1 is primitive}
    Suppose that $(2g-2)e = K_C$ in $\CH_0(C)_{\Q}$.
    Then $[C]_1 \in \CH^{g-1}(\frak{h}^{2g-3}_{\mathrm{prim}}(J))$.
\end{corollary}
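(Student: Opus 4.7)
I would first observe that the Lefschetz decomposition from \S\ref{subsec: lefschetz decomposition abelian varieties} yields the identification
\[
\frak{h}^{2g-3}_{\mathrm{prim}}(J) = \ker\bigl(\ell\cdot -\colon \frak{h}^{2g-3}(J)\to \frak{h}^{2g-1}(J)(1)\bigr).
\]
Indeed, $\ell$ annihilates $\frak{h}^{2g-3}_{\mathrm{prim}}(J) = \ell^{g-3}\cdot \frak{h}^3_{\mathrm{prim}}(J)$ because $\frak{h}^3_{\mathrm{prim}}(J) = \ker\ell^{g-2}$, while by Hard Lefschetz the map $\ell^{g-1}\colon \frak{h}^1(J)\xrightarrow{\sim}\frak{h}^{2g-1}(J)(g-1)$ is an isomorphism, so $\ell\cdot-$ is injective on the complementary summand $\ell^{g-2}\cdot \frak{h}^1(J)$. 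Passing to Chow groups, it therefore suffices to show $\ell\cdot [C]_1 = 0$ in $\CH^g(\frak{h}^{2g-1}(J))_\Q = \CH^g_{(1)}(J)_\Q$.

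The component $\ell\cdot [C]_0$ lies in $\CH^g_{(0)}(J)_\Q = \Q\cdot [0]$ and has degree $g$ (seen in cohomology via Poincar\'e's formula), hence equals $g[0]$. Consequently $\ell\cdot [C]_1$ equals the $(1)$-Beauville component of the zero-cycle $\ell\cdot [C] = \iota_{e,*}(\iota_e^*\ell) \in \CH_0(J)_\Q$, obtained from the projection formula. By Beauville \cite{Beauville-surlanneadechow}, the Albanese map restricts to an isomorphism $\mathrm{alb}\colon \CH^g_{(1)}(J)_\Q\xrightarrow{\sim} J(k)_\Q$, and sends the degree-$0$ class $\iota_{e,*}(\iota_e^*\ell) - g[0]$ to $[\iota_e^*\ell] - g[e] \in \Pic^0(C)_\Q = J(k)_\Q$.

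To evaluate this Albanese image, I would invoke the classical Abel--Jacobi formula for the pullback of the symmetric theta class:
\[
\iota_e^*\ell = [e] + \tfrac{1}{2}K_C \quad \text{in } \CH^1(C)_\Q.
\]
(Both sides have degree $g$, and one checks that both sides transform the same way under replacing $e$ by $e+v$, together with the symmetric special case when $(2g-2)e=K_C$ forces $\iota_e^*\ell = g\cdot e$.) Substituting,
\[
[\iota_e^*\ell - g\cdot e] \;=\; \tfrac{1}{2}K_C - (g-1)[e] \;=\; -\tfrac{1}{2}x_e \quad \text{in } J(k)_\Q.
\]
The hypothesis $(2g-2)e = K_C$ in $\CH_0(C)_\Q$ says precisely that $x_e$ vanishes in $J(k)_\Q$; consequently $\ell\cdot [C]_1 = 0$, so $[C]_1$ is primitive.

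The two main external ingredients are Beauville's Albanese isomorphism $\CH^g_{(1)}(J)_\Q \simeq J(k)_\Q$ and the Abel--Jacobi pullback formula for $\iota_e^*\ell$; both are classical. The most delicate step in a careful write-up is verifying the pullback formula, though this should already be used implicitly in the proof of Proposition \ref{proposition: pontryagin product C1 calculation} (via the behavior of $[\Theta_{(g-1)e}]$ under translation by torsion), so it could alternatively be derived by comparing with the identity of that proposition applied to a well-chosen family of $e$'s.
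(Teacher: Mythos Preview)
Your approach is correct and reaches the same reduction as the paper --- showing $\ell \cdot [C]_1 = 0$ in $\CH^g_{(1)}(J)$ --- but by a genuinely different route. The paper first observes that the hypothesis makes $[0]=[x_e]$ in $\CH_0(J)_\Q$, so Proposition~\ref{proposition: pontryagin product C1 calculation} gives $[C]_0^{\star(g-2)}\star[C]_1=0$; it then invokes the $\mathfrak{sl}_2$-action on $\CH^*(J)_\Q$ (in the sense of \cite{moonen-relationsbetweentautologicalcycles}) to translate this Pontryagin-product vanishing into the intersection-product vanishing $\ell\cdot[C]_1=0$. You instead compute $\ell\cdot[C]=\iota_{e,*}(\iota_e^*\ell)$ directly via the projection formula, identify its Albanese image with $-\tfrac12 x_e$ using $\iota_e^*\ell=e+\tfrac12 K_C$, and conclude via Beauville's isomorphism $\CH^g_{(1)}(J)\simeq J(k)_\Q$. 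Your path is more elementary and self-contained (it bypasses both Proposition~\ref{proposition: pontryagin product C1 calculation} and the $\mathfrak{sl}_2$-machinery); the paper's path is shorter given the surrounding infrastructure already in place.

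The one soft spot is your justification of $\iota_e^*\ell=e+\tfrac12 K_C$. Degree and the transformation rule under $e\mapsto e+v$ (which itself uses the autoduality $\iota_e^*\circ\lambda=-\mathrm{id}_J$) do reduce the formula to a single instance, but ``the symmetric special case'' does not by itself pin that instance down: when $(2g-2)e=K_C$ there is no involution of $C$ to exploit, so symmetry alone does not force $\iota_e^*\ell=g\cdot e$. A clean fix is to cite Riemann's theorem in the form $\iota_e^*[\Theta_D]=e+K_C-D$ in $\Pic^g(C)$ and average over $D$ and $K_C-D$. Your closing suggestion that the formula could be extracted from Proposition~\ref{proposition: pontryagin product C1 calculation} is plausible, but making it precise would again pass through the Fourier/$\mathfrak{sl}_2$ dictionary relating $[C]_0\star(-)$ to $\ell\cdot(-)$, which is exactly what your argument was designed to avoid.
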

\begin{proof}
    By definition and the discussion in \S\ref{subsec: chow kunneth decomposition abelian vars}, $[C]_1\in \CH_{(1)}^{g-1}(J) = \CH^{g-1}(\frak{h}^{2g-3}(J))$.
    Since $x_e$ is torsion, $(n)_*([0]-[x_e]) =0$ for some integer $n\geq 1$.
    The decomposition \eqref{equation: beauville decomposition} implies that $[0] = [x_e]$ in $\CH_0(J)_{\Q}$.
    Therefore \eqref{equation: pontryagin product calculation} shows that $[C]_0^{\star(g-2)} \star [C]_1=0$.
    Using properties of the $\mathfrak{sl}_2$-action on $\CH^*(J)_{\Q}$ (in the sense of \cite[\S1.3]{moonen-relationsbetweentautologicalcycles}), this implies that $\ell \cdot [C]_1 = 0$.
    Since $\CH^{g-1}(\frak{h}^{2g-3}_{\mathrm{prim}}(J))$ equals the kernel of $\ell \cdot (-) \colon \CH^{g-1}(\frak{h}^{2g-3}(J)) \rightarrow \CH^{g-1}(\frak{h}^{2g-1}(J)(1))$, the corollary follows.
\end{proof}

\subsection{Ceresa cycles}\label{subsec: ceresa cycles}
Let $C/k$ be a nice curve of genus $g\geq 2$ with Jacobian $J$.
Let $e$ be a degree-$1$ divisor on $C$ and let $\iota_e\colon C\rightarrow J$ be the Abel--Jacobi map based at $e$.
We define $\kappa_{C,e}\in \CH_1(J)$ using the formula \eqref{equation: ceresa cycle} from the introduction.
Using the Beauville decomposition \eqref{equation: beauville decomposition} to write $[C] = [\iota_e(C)] = \sum_{s=0}^{g-1} [C]_s$ with $[C]_s\in \CH^{g-1}_{(s)}(J)$, we calculate that 
\begin{align}\label{equation: ceresa cycle beauville components}
\kappa_{C,e}=\kappa(C) = 2[C]_1 + 2[C]_3 + \dots + 2[C]_{2\lfloor \frac{g-2}{2}\rfloor+1}
\end{align}
in $\CH_1(J)_{\Q}$.

\begin{lemma}
    If $\kappa_{C,e}$ is torsion, then $(2g-2)e-K_C$ is torsion.
\end{lemma}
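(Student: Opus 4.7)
My plan is to reduce the statement directly to Corollary \ref{corollary: C1 vanishes implies (2g-2)e canonical}, which already translates the vanishing of $[C]_1 \in \CH^{g-1}_{(1)}(J)$ into the desired conclusion $(2g-2)e = K_C$ in $\CH_0(C)_\Q$ (equivalently, $(2g-2)e - K_C$ is a torsion class in $\CH_0(C)$). Thus the only remaining task is to deduce $[C]_1 = 0$ from the torsion hypothesis on $\kappa_{C,e}$.

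First I would observe that if $\kappa_{C,e} \in \CH_1(J)$ is torsion, then its image in the rational Chow group $\CH_1(J)_\Q$ is zero. I would then invoke the Beauville-component identity \eqref{equation: ceresa cycle beauville components}, which expresses this image as $2[C]_1 + 2[C]_3 + \cdots + 2[C]_{2\lfloor (g-2)/2 \rfloor + 1}$, with each summand $2[C]_{2j+1}$ living in $\CH^{g-1}_{(2j+1)}(J)$. Because the Beauville decomposition $\CH_1(J)_\Q = \bigoplus_s \CH^{g-1}_{(s)}(J)$ is a direct sum decomposition, each graded piece must vanish separately. In particular $[C]_1 = 0$, and Corollary \ref{corollary: C1 vanishes implies (2g-2)e canonical} then closes the argument.

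There is no serious obstacle at this step — the genuine work has already been carried out in Proposition \ref{proposition: pontryagin product C1 calculation}, whose Pontryagin-product identity is what actually converts $[C]_1 = 0$ into torsion of the point $x_e = [(2g-2)e] - K_C \in J(k)$, via the isomorphism $J(k) \to \CH^1(J)_{\mathrm{hom}}$ induced by the principal polarization. The present lemma is therefore essentially a bookkeeping consequence of that proposition, combined with the fact that $\kappa_{C,e}$ is by construction the odd part of $[C]$ in the Beauville grading.
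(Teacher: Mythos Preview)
Your proposal is correct and follows exactly the paper's approach: the paper's proof reads ``If $\kappa_{C,e}$ is torsion, then $[C]_1 = 0$ by \eqref{equation: ceresa cycle beauville components}. We conclude using Corollary \ref{corollary: C1 vanishes implies (2g-2)e canonical}.'' Your write-up simply spells out the direct-sum reasoning behind the first sentence and adds accurate commentary on where the substantive work lies.
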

\begin{proof}
    If $\kappa_{C,e}$ is torsion, then $[C]_1 = 0$ by \eqref{equation: ceresa cycle beauville components}.
    We conclude using Corollary \ref{corollary: C1 vanishes implies (2g-2)e canonical}.
\end{proof}

\begin{lemma}\label{lemma: torsion difference gives equivalent ceresa}
    If $e,e'\in \CH_0(C)$ are degree-$1$ divisors such that $e-e'$ is torsion, then $[\iota_e(C)] = [\iota_{e'}(C)]$ and $\kappa_{C,e} = \kappa_{C,e'}$ in $\CH_1(J)_{\Q}$.
\end{lemma}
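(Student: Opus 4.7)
The plan is to reduce the statement to the identity $[\iota_e(C)] = [\iota_{e'}(C)]$ in $\CH_1(J)_\Q$, since applying $\mathrm{id} - (-1)^*$ to both sides then yields $\kappa_{C,e} = \kappa_{C,e'}$. I would set $a = [e' - e] \in J(k)$ (working over a separable closure if need be and descending via Lemma \ref{lemma: base extension chow groups}(1)); by hypothesis $a$ is a torsion point.

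The two Abel--Jacobi maps differ by translation: for every $x \in C$,
\[
\iota_e(x) - \iota_{e'}(x) = [x-e] - [x-e'] = [e'-e] = a,
\]
so $\iota_e = t_a \circ \iota_{e'}$, where $t_a \colon J \to J$ is translation by $a$. Pushing the cycle $[\iota_{e'}(C)]$ forward gives $[\iota_e(C)] = t_{a,*}[\iota_{e'}(C)] = [a] \star [\iota_{e'}(C)]$, using the standard identification of translation with Pontryagin product against a point.

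The heart of the argument is then to show $[a] = [0]$ in $\CH_0(J)_\Q$ whenever $a$ is a torsion point. Using the Beauville decomposition \eqref{equation: beauville decomposition}, I would write $[a] - [0] = \sum_{s \geq 0} \beta_s$ with $\beta_s \in \CH^g_{(s)}(J)_\Q$. The component $\beta_0$ vanishes because $\CH^g_{(0)}(J)_\Q$ is detected by the degree map and both $[a]$ and $[0]$ have degree $1$. If $n \geq 1$ satisfies $na = 0$ then $(n)_*([a]-[0]) = 0$, while $(n)_*$ acts on $\CH^g_{(s)}(J)_\Q$ as multiplication by $n^s$ (this follows from $(n)^*$ acting as $n^{2g-s}$ on $\CH^g_{(s)}$ together with $(n)_* (n)^* = n^{2g}$). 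Hence $n^s \beta_s = 0$ forces $\beta_s = 0$ for every $s \geq 1$, so $[a] = [0]$. Combining with $[0] \star \alpha = \alpha$ (which holds because $m|_{\{0\} \times J}$ is the projection to $J$) gives $[\iota_e(C)] = [\iota_{e'}(C)]$, and the lemma follows.

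I do not expect a real obstacle: the argument is a formal consequence of the Beauville decomposition recalled in \S\ref{subsec: chow kunneth decomposition abelian vars} and the compatibility of translation with Pontryagin product by a point. The only thing to verify carefully is the normalization convention governing the action of $(n)^*$ and $(n)_*$ on the Beauville pieces, to justify the final step that forces $\beta_s = 0$.
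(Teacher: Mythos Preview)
Your proof is correct. Both your argument and the paper's hinge on the same key fact: the Beauville decomposition implies that $(n)_*$ is an isomorphism on $\CH_*(J)_\Q$ for $n\neq 0$. You apply this to $\CH_0(J)_\Q$ to deduce $[a]=[0]$ and then invoke the Pontryagin product, whereas the paper applies it directly to $\CH_1(J)_\Q$: from $n(e-e')=0$ one has $(n)\circ\iota_e=(n)\circ\iota_{e'}$, hence $(n)_*\big([\iota_e(C)]-[\iota_{e'}(C)]\big)=0$, and injectivity of $(n)_*$ finishes. The paper's route is a bit shorter since it avoids the detour through zero-cycles and Pontryagin product, but the underlying mechanism is identical. (Incidentally, your separate degree argument for $\beta_0=0$ is unnecessary: your eigenvalue computation already gives $n^0\beta_0=0$.)
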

\begin{proof}
    Suppose $e-e'$ has order $n$ in $\CH_0(C)$.
    Then $(n)\circ \iota_e = (n) \circ \iota_{e'}$, hence $(n)_*([\iota_e(C)] - [\iota_{e'}(C)]) = 0$ in $\CH_1(J)$.
    On the other hand, the decomposition \eqref{equation: beauville decomposition} shows that $(n)_*\colon \CH_1(J)_{\Q} \rightarrow \CH_1(J)_{\Q}$ is an isomorphism.
    Therefore $[\iota_e(C)] - [\iota_{e'}(C)]$ is torsion. 
    Hence $\kappa_{C,e}- \kappa_{C,e'}$ is torsion too.
\end{proof}
Let $\kappa(C)$ be the image of $\kappa_{C,e}$ in $\CH_1(J)_\Q$ for any choice of degree-$1$ divisor $e$ on $C$ such that $(2g-2)e = K_C$ in $\CH_0(C)_{\Q}$.
Lemma \ref{lemma: torsion difference gives equivalent ceresa} shows that this class is independent of the choice of $e$.
(If $k$ is not algebraically closed and no such $e$ exists over $k$, there exists a unique class $\kappa(C)\in \CH_1(J)_{\Q}$ such that $\kappa(C)_{\bar{k}} = \kappa(C_{\bar{k}})$ in $\CH_1(J_{\bar{k}})_{\Q}$, since Chow groups with $\Q$-coefficients satisfy Galois descent.)
We let $\bar{\kappa}(C)$ be the image of $\kappa(C)$ in $\Gr_1(J)_{\Q}$.
Since all degree-$1$ divisors $e$ on $C$ are algebraically equivalent, $\bar{\kappa}(C)$ is also the image of $\kappa_{e}(C)$ in $\Gr_1(J)_\Q$ for \emph{any} degree-$1$ divisor, not necessarily with the property that $(2g-2)e = K_C$ in $\CH_0(C)_{\Q}$.

Suppose now that $(2g-2)e= K_C$ in $\CH_0(C)_{\Q}$.
Since $[\iota_e(C)]\in \CH_1(J)_{\Q}$ is independent of the choice of $e$, the same is true for the classes $[C]_{s}$.
In particular, they are $\Aut(C)$-invariant.

\begin{proposition}\label{proposition: vanishing ceresa equivalent to vanishing C1}
    In the above notation, $\kappa(C)=0$ in $\CH_1(J)_{\Q}$ if and only if $[C]_1=0$ in $\CH_1(J)_{\Q}$ if and only if $[C]_s=0$ for all $s\geq 1$.
    Moreover $\bar{\kappa}(C) =0$ in $\Gr_1(J)_{\Q}$ if and only if $[C]_1 \in  \Gr_1(J)_{\Q}$ if and only if $[C]_s=0$ in $\Gr_1(J)_{\Q}$ for all $s\geq 1$.
\end{proposition}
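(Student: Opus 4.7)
The plan is to exploit the Beauville decomposition \eqref{equation: beauville decomposition} as a direct sum, read off the equivalence $\kappa(C)=0 \iff [C]_s=0$ for odd $s\geq 1$ from formula \eqref{equation: ceresa cycle beauville components}, and then leverage Proposition~\ref{proposition: pontryagin product C1 calculation} together with the tautological-ring analysis of \cite{moonen-relationsbetweentautologicalcycles} to upgrade this to vanishing of every $[C]_s$ with $s\geq 1$.

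First I would observe that since $\CH^{g-1}(J)_{\Q}=\bigoplus_{s=0}^{g-1}\CH^{g-1}_{(s)}(J)$ is a direct sum and \eqref{equation: ceresa cycle beauville components} places the $s$-th Beauville component of $\kappa(C)$ equal to $2[C]_s$ for $s$ odd and zero for $s$ even, we read off at once that $\kappa(C)=0$ in $\CH_1(J)_\Q$ if and only if $[C]_s=0$ for every odd $s\geq 1$. This provides the trivial implications $(c)\Rightarrow(a)\Rightarrow(b)$, and reduces the proposition to the implication $(b)\Rightarrow(c)$.

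The substantive content is thus: from $[C]_1=0$ one must deduce $[C]_s=0$ for all $s\geq 1$. Under the hypothesis $(2g-2)e=K_C$ in $\CH_0(C)_\Q$, the point $x_e\in J(k)$ is torsion, so by the explicit description of the Beauville decomposition for zero-cycles $[0]=[x_e]$ in $\CH_0(J)_\Q$; Proposition~\ref{proposition: pontryagin product C1 calculation} then degenerates to $[C]_0^{\star(g-2)}\star[C]_1=0$, and the $\mathfrak{sl}_2$-action of \cite{moonen-relationsbetweentautologicalcycles} (as used in Corollary~\ref{corollary: C1 is primitive}) yields $\ell\cdot[C]_1=0$, i.e., $[C]_1$ is $\mathfrak{sl}_2$-primitive. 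To propagate vanishing to the higher components $[C]_s$ ($s\geq 2$), I would invoke the structural theorem of Polishchuk--Moonen on the tautological subring $\mathcal{T}\subset\CH(J)_\Q$, which describes the Beauville graded piece $\mathcal{T}_{(s)}$ for $s\geq 1$ via iterated Pontryagin products of $[C]_1$ (and intersection with powers of $\ell$); since $[C]_s\in\mathcal{T}_{(s)}$, the hypothesis $[C]_1=0$ annihilates every such monomial and forces $[C]_s=0$.

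The Griffiths-group version follows by the identical argument applied modulo algebraic equivalence, since the Beauville decomposition, the Pontryagin and intersection products, and the $\mathfrak{sl}_2$-action all descend to the quotient $\Gr_1(J)_\Q=\CH^{g-1}(J)_\Q/\CH^{g-1}(J)_{\mathrm{alg},\Q}$. The main obstacle is the last step of $(b)\Rightarrow(c)$: the Pontryagin identity of Proposition~\ref{proposition: pontryagin product C1 calculation} is tailored to $[C]_1$ and gives primitivity directly, but transferring vanishing to every $[C]_s$ ($s\geq 2$) requires the full tautological-ring machinery of \cite{moonen-relationsbetweentautologicalcycles}, and I expect verifying carefully that these relations indeed express each $[C]_s$ in terms of $[C]_1$ alone to be the delicate point of the proof.
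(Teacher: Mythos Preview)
Your reduction is correct and matches the paper's: both arguments observe from \eqref{equation: ceresa cycle beauville components} that everything comes down to the implication $[C]_1=0 \Rightarrow [C]_s=0$ for all $s\geq 1$. The paper, however, dispatches this implication in one line by citing the third displayed equation of \cite[Theorem~1.5.5]{ShouwuZhang}, which gives an explicit formula expressing each $[C]_s$ in terms of $[C]_1$ (and $[C]_0$). Your route via the Polishchuk--Moonen structure theory is in the same circle of ideas and should ultimately work, but is less direct: the statement you invoke---that $\mathcal{T}_{(s)}$ for $s\geq 1$ is spanned by Pontryagin monomials in $[C]_1$ alone---is not how those results are phrased (a priori $\mathcal{T}_{(s)}$ is spanned by products $[C]_{s_1}\star\cdots\star[C]_{s_k}$ with $\sum s_i=s$, involving all the $[C]_i$), so you would still need to extract the specific relation expressing $[C]_s$ through $[C]_1$, which is precisely what Zhang's formula packages.

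One further remark: your detour through Proposition~\ref{proposition: pontryagin product C1 calculation} and the primitivity of $[C]_1$ is irrelevant here. You are \emph{assuming} $[C]_1=0$, so its primitivity is vacuous; those results are used in the paper for the next proposition (locating $[C]_1$ inside $\CH^{g-1}(\frak{h}^{2g-3}_{\mathrm{prim}}(J)^{\Aut(C)})$), not for this one.
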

\begin{proof}
    Using the expression \eqref{equation: ceresa cycle beauville components}, it suffices to prove $[C]_1 = 0$ implies $[C]_s=0$ for all $s\geq 1$.
    This follows from the third centered equation of \cite[Theorem 1.5.5]{ShouwuZhang}.
    The proof for $\bar{\kappa}(C)$ is identical.
\end{proof}

Proposition \ref{proposition: vanishing ceresa equivalent to vanishing C1} shows that the vanishing of $\kappa(C)$ is equivalent to the vanishing of $[C]_1$.
The next lemma isolates a summand of the Chow group that contains $[C]_1$.
Recall that if $M$ is a direct summand of $\frak{h}(J)$, then $\CH^p(M)$ is naturally a summand of $\CH^p(\frak{h}(J)) = \CH^p(J)_{\Q}$.

\begin{proposition}\label{proposition: C1 lies in specific submotive}
    In the above notation, $[C]_1\in \CH^{g-1}(\frak{h}^{2g-3}_{\mathrm{prim}}(J)^{\Aut(C)})\subset \CH_1(J)_{\Q}$.
\end{proposition}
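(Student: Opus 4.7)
The proposition is essentially a conjunction of three properties of $[C]_1$: that it lies in the degree-$(2g-3)$ Chow--Künneth summand, that it lies in the primitive part of that summand, and that it is $\Aut(C)$-invariant. My plan is to collect these three ingredients, each of which is either already proven earlier in the paper or nearly so, and then assemble them using the identity $\CH^p(M^G)=\CH^p(M)^G$ from \S\ref{subsec: chow motives}.

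First I would reduce to the case that a divisor $e$ with $(2g-2)e=K_C$ in $\CH_0(C)_{\Q}$ exists over $k$: by Lemma \ref{lemma: base extension chow groups}(1) the base change $\CH^{g-1}(J)_{\Q}\to \CH^{g-1}(J_{\bar{k}})_{\Q}$ is injective and $\Aut(C_{\bar{k}})$-equivariant, and the submotive $\frak{h}^{2g-3}_{\mathrm{prim}}(J)^{\Aut(C)}$ base-changes correctly, so it suffices to check the statement over $\bar{k}$, where such an $e$ is available. By construction $[C]_1\in \CH^{g-1}_{(1)}(J) = \CH^{g-1}(\frak{h}^{2g-3}(J))$, and Corollary \ref{corollary: C1 is primitive} refines this to $[C]_1\in \CH^{g-1}(\frak{h}^{2g-3}_{\mathrm{prim}}(J))$.

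It then remains to show $\Aut(C)$-invariance. The paragraph immediately preceding the proposition already asserts this, but I would spell out the short argument: for any $\sigma\in \Aut(C)$, functoriality of the Abel--Jacobi map gives $\sigma_*[\iota_e(C)]=[\iota_{\sigma(e)}(C)]$, and since $(2g-2)\sigma(e)=\sigma_*K_C=K_C$ in $\CH_0(C)_{\Q}$, Lemma \ref{lemma: torsion difference gives equivalent ceresa} (together with the already established independence of $[\iota_e(C)]$ on the choice of such $e$) forces $\sigma_*[\iota_e(C)]=[\iota_e(C)]$. Because the Beauville projector onto $\CH^{g-1}_{(1)}(J)$ is built from the maps $(n)^*$, which commute with the $\Aut(C)$-action on $J$, applying this projector to the invariant class $[\iota_e(C)]$ yields an invariant $[C]_1$.

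Combining these: $[C]_1$ is a class in $\CH^{g-1}(\frak{h}^{2g-3}_{\mathrm{prim}}(J))$ fixed by $\Aut(C)$, which by the definition of the motive of fixed points recalled in \S\ref{subsec: chow motives} is exactly $\CH^{g-1}(\frak{h}^{2g-3}_{\mathrm{prim}}(J)^{\Aut(C)})$. I do not expect a real obstacle here — each of the three ingredients is short — but the only point that warrants care is ensuring that projecting onto the primitive summand commutes with the $\Aut(C)$-action, which follows because the Künnemann Lefschetz projector is built from intersection with $\ell=(1,\lambda)^*\mathcal{P}/2$, and $\lambda$ is the canonical principal polarization and hence $\Aut(C)$-equivariant.
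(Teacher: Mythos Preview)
Your proposal is correct and follows essentially the same approach as the paper, which simply writes ``Combine Corollary \ref{corollary: C1 is primitive} and the fact that $[C]_1$ is $\Aut(C)$-invariant.'' You have expanded the details the paper leaves implicit --- in particular the $\Aut(C)$-equivariance of the primitive projector via the canonical polarization --- but the underlying argument is identical.
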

\begin{proof}
    Combine Corollary \ref{corollary: C1 is primitive} and the fact that $[C]_1$ is $\Aut(C)$-invariant.
\end{proof}

\section{Vanishing criteria in the Chow and Griffiths groups}\label{sec: proofs theorems A and B}

The proof of Theorem \ref{thm: main} is quite short, but uses in a crucial way Kimura's notion of finite dimensional Chow motives \cite{Kimura}.

\begin{proof}[Proof of Theorem $\ref{thm: main}$]
 We use the definitions and notations of \S\ref{subsec: chow motives} and \S\ref{subsec: ceresa cycles}.
Choose a degree-$1$ divisor $e$ such that $(2g-2)e=K_C$ in $\CH_0(C)_{\Q}$ and decompose $[\iota_e(C)] = \sum_{s=-1}^{g-1} [C]_s$ with $[C]_s \in \CH^{g-1}_{(s)}(J)$.
Let $G = \Aut(C)$.
Proposition \ref{proposition: C1 lies in specific submotive} shows that $[C]_1\in \CH^{g-1}(\frak{h}^{2g-3}_{\mathrm{prim}}(J)^{G})$.

By the hypothesis and the Lefschetz isomorphism, $\HH^*(\frak{h}_{\mathrm{prim}}^{2g-3}(J)^{G}) = \HH^{2g-3}_{\mathrm{prim}}(J)^G=0$. Since any summand of the motive of an abelian variety is finite-dimensional in the sense of Kimura \cite[Example 9.1]{Kimura}, it follows from Kimura's \cite[Corollary 7.3]{Kimura} that $\frak{h}^{2g-3}_{\mathrm{prim}}(J)^{G} = 0$ and hence $[C]_1 = 0$ in $\CH^{g-1}(J)_{\Q}$.  
By Proposition \ref{proposition: vanishing ceresa equivalent to vanishing C1}, we conclude that $\kappa(C)=0$.  
\end{proof}

\begin{example}\label{example: theorem A genus 3 examples}
{\em 
    There is exactly one non-hyperelliptic genus 3 curve over $\C$ for which the criterion of \cite{QiuZhang} applies, namely the curve $y^3 = x^4 + 1$. The curve $y^3 = x^4 + x$ satisfies the weaker hypothesis of Theorem \ref{thm: main} (as was observed in \cite{Beauville}), so we deduce that $\kappa(C) = 0$ for this curve as well.  Beauville and Schoen studied the specific geometry of this curve and showed that $\bar{\kappa}(C) = 0$ \cite{BeauvilleSchoen}. 
}
\end{example}
\begin{example}
    {\em 
The genus $4$ curve $y^3 = x^5 + 1$ satisfies $\HH^3(J)^{\Aut(C)} = 0$ \cite[proof of Theorem 3.3]{LilienfeldtShnidman}, so $\kappa(C)=0$. 
}
\end{example}




Theorem \ref{thm: griffiths vanishing} will follow from the next proposition.
In that proposition and its proof, if $X$ is a nice variety over $\C$ we write $\HH^*(X)$ for the singular cohomology of $X(\C)$ with $\Q$-coefficients, seen as an object in the category of Hodge structures.

\begin{proposition}\label{prop: preparation griffiths vanishing}
    Let $C$ be a smooth, projective, integral curve over $\C$ with Jacobian $J$ and let $G\subset \Aut(C)$ be a subgroup with $\HH^0(J, \Omega_J^3)^{G}=0$.
    Then there exists an abelian variety $A/\C$ such that $\HH^3(J)^{G} \simeq \HH^1(A)(-1)$.
    If the Hodge conjecture holds for $J\times A$, then $\frak{h}^3(J)^{G}\simeq \frak{h}^1(A)(-1)$ and $\bar{\kappa}(C)=0$.
\end{proposition}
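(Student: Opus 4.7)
My plan is to first produce an abelian variety $A$ with a Hodge-structure isomorphism $\HH^3(J)^G \cong \HH^1(A)(-1)$, then upgrade this to an isomorphism of Chow motives using the Hodge conjecture and Kimura's finite-dimensionality results, and finally conclude $\bar\kappa(C)=0$ via the K\"unnemann Lefschetz isomorphism and Lemma \ref{lemma: Griffiths group vanishes for h^1}. For the existence of $A$, the hypothesis $\HH^0(J,\Omega^3_J)^G = 0$ together with Hodge symmetry gives $\HH^{3,0}(J)^G = \HH^{0,3}(J)^G = 0$, so the $G$-invariant sub-Hodge structure $V := \HH^3(J)^G$ has Hodge type concentrated in $\{(2,1),(1,2)\}$. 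Being cut out by the $G$-averaging idempotent from the polarizable Hodge structure $\HH^3(J)$, it is itself polarizable, so $V(1)$ is a polarizable weight-$1$ Hodge structure, and by Riemann's theorem $V(1) \cong \HH^1(A)$ for some abelian variety $A/\C$.

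For the motivic upgrade, I would observe that a morphism of Hodge structures $\HH^3(J) \to \HH^1(A)(-1)$ corresponds via Poincar\'e duality on $J$ to a Hodge class in $\HH^{2g-2}(J\times A)(g-1)$. By the Hodge conjecture for $J\times A$, the isomorphism from the previous step is realized by some cycle $\Gamma \in \CH^{g-1}(J\times A)_\Q$; composing $\Gamma$ with the algebraic projectors onto $\frak{h}^3(J)^G$ (the product of the Deninger--Murre projector and the $G$-averaging idempotent $\frac{1}{\#G}\sum_{\sigma \in G}\sigma_*$) and onto $\frak{h}^1(A)$ yields a morphism $\Phi\colon \frak{h}^3(J)^G \to \frak{h}^1(A)(-1)$ in $\mathsf{Mot}(\C)$. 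Lifting the inverse Hodge isomorphism similarly produces $\Psi$ in the opposite direction. Since summands of motives of abelian varieties are finite-dimensional in the sense of Kimura, the endomorphisms $\Psi\Phi - \id$ and $\Phi\Psi - \id$ have trivial cohomological realization, hence are nilpotent by \cite[Corollary 7.3]{Kimura}; consequently $\Phi$ is invertible.

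Finally, for $\bar\kappa(C) = 0$: by Proposition \ref{proposition: vanishing ceresa equivalent to vanishing C1} it suffices to show that $[C]_1$ is algebraically trivial, and by Proposition \ref{proposition: C1 lies in specific submotive} we have $[C]_1 \in \CH^{g-1}(\frak{h}^{2g-3}_{\mathrm{prim}}(J)^G)$. Since $G$ fixes the polarization class $\ell$, the K\"unnemann Lefschetz decomposition is $G$-equivariant, identifying $\frak{h}^{2g-3}_{\mathrm{prim}}(J)^G$ with a Tate twist of $\frak{h}^3_{\mathrm{prim}}(J)^G$, which is itself a direct summand of $\frak{h}^3(J)^G \simeq \frak{h}^1(A)(-1)$. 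Since all Chow groups of $\frak{h}^1(A)$ consist of algebraically trivial classes (Lemma \ref{lemma: Griffiths group vanishes for h^1}), and this property is preserved under direct summands, Tate twists, and motivic isomorphisms, $[C]_1$ is algebraically trivial. The main obstacle is the second step: lifting a Hodge-theoretic isomorphism to a motivic one requires both the Hodge conjecture (to produce algebraic cycles on $J\times A$) and Kimura's finite-dimensionality (to bootstrap a cohomological isomorphism to a motivic one), and this combined input is precisely what distinguishes Theorem \ref{thm: griffiths vanishing} from Theorem \ref{thm: main}, where pure cohomological vanishing sufficed.
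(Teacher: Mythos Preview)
Your proposal is correct and follows essentially the same approach as the paper. The only cosmetic differences are: (i) you construct $A$ directly from the polarizability of the sub-Hodge structure $\HH^3(J)^G$, whereas the paper first embeds it into the maximal $(1,2)+(2,1)$ piece $N^1\HH^3(J)\simeq \HH^1(B)(-1)$ and takes $A\subset B$; (ii) you invoke Kimura's nilpotence result \cite[Corollary~7.3]{Kimura} to deduce that $\Psi\Phi$ and $\Phi\Psi$ are unipotent hence invertible, whereas the paper cites \cite[Proposition~7.2(ii)]{Kimura} directly; and (iii) you route $[C]_1$ through $\frak{h}^{2g-3}_{\mathrm{prim}}(J)^G$ via Proposition~\ref{proposition: C1 lies in specific submotive}, while the paper simply places it in $\CH^{g-1}(\frak{h}^{2g-3}(J)^G)$ and applies the full K\"unnemann Lefschetz isomorphism $\frak{h}^{2g-3}(J)^G\simeq \frak{h}^1(A)(-g+2)$ without isolating the primitive part.
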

\begin{proof}
    Let $N^1\HH^3(J)$ be the largest sub-Hodge structure of $\HH^3(J)$ of type $(1,2)+(2,1)$.
    The polarization on $\HH^1(J)$ induces a polarization on $N^1\HH^3(J)$ and so $N^1\HH^3(J) \simeq \HH^1(B)(-1)$ for some abelian variety $B/\C$.
    The assumptions and the Hodge decomposition imply that $\HH^3(J)^G$ is a Hodge structure of type $(1,2) + (2,1)$. It follows that $\HH^3(J)^G \subset N^1\HH^3(J)$, so there exists an abelian subvariety $A\subset B$ with $\HH^3(J)^G \simeq \HH^1(A)(-1)$.

    We now show that the Hodge conjecture for $J\times A$ implies the claims of the final sentence.
    Fix mutually inverse isomorphisms of Hodge structures $\phi\colon \HH^3(J)^G\rightarrow \HH^1(A)(-1)$ and $\psi\colon \HH^1(A)(-1)\rightarrow \HH^3(J)^G$.
    By the Hodge conjecture, there exist morphisms of motives $\Phi\colon \frak{h}^3(J)^G \rightarrow \frak{h}^1(A)(-1)$ and $\Psi \colon \frak{h}^1(A)(-1) \rightarrow \frak{h}^3(J)^G$ (in other words, cycles on $J\times A$ with certain properties) with $\HH^*(\Phi) = \phi$ and $\HH^*(\Psi) = \psi$.
    Since $\frak{h}^1(A)(-1)$ and $\frak{h}^3(J)^G$ are Kimura finite-dimensional \cite[Example 9.1]{Kimura} and $\HH^*(\Phi) \circ \HH^*(\Psi)$ and $\HH^*(\Psi) \circ \HH^*(\Phi)$ are the identity, it follows from \cite[Proposition 7.2(ii)]{Kimura} (see also \cite[Corollaire 3.16]{andre-motifsdimensionfinie}) that $\Psi\circ \Phi$ and $\Phi\circ \Psi$ are themselves isomorphisms.
    Hence $\Phi$ and $\Psi$ are isomorphisms too and we conclude that $\frak{h}^3(J)^G \simeq \frak{h}^1(A)(-1)$.

    Using the Lefschetz isomorphism $\frak{h}^{2g-3}(J) \simeq \frak{h}^3(J)(-g+3)$ of \cite[Theorem 5.2]{kunnemann-lefschetzdecompositionchowmotives}, we obtain an isomorphism $\frak{h}^{2g-3}(J)^G\simeq \frak{h}^1(A)(-g+2)$.
    Similarly to the proof of Theorem \ref{thm: main}, we decompose $[\iota_e(C)] = \sum_{s=0}^{g-1} [C]_s$ with $[C]_s \in \CH^{g-1}_{(s)}(J)$ and observe that the class $[C]_1$ lies in $\CH_{(1)}^{g-1}(J)^G = \CH^{g-1}(\frak{h}^{2g-3}(J)^G)$.
    Lemma \ref{lemma: Griffiths group vanishes for h^1} combined with the isomorphism $\frak{h}^{2g-3}(J)^G\simeq \frak{h}^1(A)(-g+2)$ shows that $\CH^{g-1}(\frak{h}^{2g-3}(J)^G) = \CH^{g-1}(\frak{h}^{2g-3}(J)^G)_{\mathrm{alg}}$, hence every element of $\CH^{g-1}_{(1)}(J)^G$ lies in $\CH^{g-1}(J)_{\mathrm{alg},\Q}$.
    Therefore $[C]_1\in \CH^{g-1}(J)_{\mathrm{alg},\Q}$, hence the image of $[C]_1$ in $\Gr_1(J)_{\Q}$ vanishes.
    Proposition \ref{proposition: vanishing ceresa equivalent to vanishing C1} then implies that $\bar{\kappa}(C)$ vanishes too.
\end{proof}

\begin{proof}[Proof of Theorem $\ref{thm: griffiths vanishing}$]
    Since $C$ can be defined over a countable field and since such a field can be embedded in $\C$, Lemma \ref{lemma: base extension chow groups} shows that we may assume $k=\C$. We conclude by Proposition \ref{prop: preparation griffiths vanishing} applied to $G = \Aut(C)$.
\end{proof}



We now describe some families of curves for which the criterion of Theorem \ref{thm: griffiths vanishing} applies.
If $C/k$ is a curve we write $V(C) = \HH^0(C, \Omega_C)$, which we think of as an $\Aut(C)$-representation. Recall that it is $\bigwedge^3 V(C) \simeq \HH^0(J, \Omega_J^3)$ that appears in Theorem \ref{thm: griffiths vanishing}.
\begin{example}\label{example: Picard curves}
{\em 
 Let $C \colon y^3 = x^4 + ax^2 + bx + c$ be a Picard curve. Then $V(C) \simeq \chi \oplus \chi \oplus \chi^2$ as $C_3$-representations, where $\chi$ is a character of order $3$. It follows that $(\bigwedge^3V(C))^{\Aut(C)} = 0$, so that the condition of Theorem \ref{thm: griffiths vanishing} is satisfied. We consider these curves in detail in the next section.
 }
\end{example}


\begin{example}\label{ex: genus 4 mu3 family}
{\em 
The general $\mu_3$-cover of $\P^1$ with equation
    \[y^3 = x^2(x-1)^2(x^3 + ax^2 + bx + c),\]
    has genus $4$ and by \cite[Lemma 2.7]{Moonen-special} we have $V(C) \simeq \chi \oplus \chi \oplus \chi^2 \oplus \chi^2$ as $C_3$-representations. It follows that $(\bigwedge^3V(C))^{C_3} = 0$ and the abelian variety $A$ from Proposition \ref{prop: preparation griffiths vanishing} is 4-dimensional. If the Hodge conjecture holds for the $8$-dimensional $J \times A$, then $\bar{\kappa}(C) = 0$.
}
\end{example}

For our next class of examples, we consider one-parameter families of smooth projective curves with affine model of the form 
\[C_t \colon y^m = \left(\frac{x+1}{x-1}\right)^a \left(\frac{x+t}{x-t}\right)^b,\]
for integers $m,a,$ and $b$ satisfying $0 < a < b < m/2$ and $\gcd(m,a,b) = 1$. If $t\neq 0,\pm 1$, then $C_t$ is smooth projective of genus 
\[g = m +1 - \gcd(a,m) - \gcd(b,m),\]
which we will assume below is at least $3$. These curves admit an action by the dihedral group $D_m$, generated by the automorphisms $(x,y) \mapsto (x,\zeta_m y)$ and $\tau(x,y) = (-x, y^{-1})$.

\begin{lemma}
The induced action of $\mu_m \subset D_m$ on $V(C_t)$ has character $\sum_{n = 1}^{m-1} \epsilon(n) \chi^n$, where $\chi^n$ is the $1$-dimensional representation of $\mu_m$ on which $\zeta \in \mu_m$ acts by $\zeta^n$ and 
    \[\epsilon(n) = 
    \begin{cases}
        1 & \mbox{ if neither $m\nmid na$ nor $m \nmid nb$}\\
        0 & \mbox{otherwise}
    \end{cases}
    \]
 
\end{lemma}
\begin{proof}
This follows from \cite[Lemma 2.7]{Moonen-special}. 
\end{proof}

\begin{corollary}
    The space $\bigwedge^3 V(C_t)$ has a non-trivial $D_m$-invariant subspace if and only if there exists a triple of integers $0 < n_1 < n_2 < n_3 < m$ such that for all $i \in \{1,2,3\}$  neither $m\mid n_ia$ nor $m \mid n_i b$, and such that $n_1 + n_2 + n_3 = m$.
\end{corollary}
\begin{proof}
    By the lemma, a $\mu_m$-invariant subspace exists if and only if there exists a triple with the above divisibility properties and such that $n_1 +n_2 + n_3 \in \{m, 2m\}$. However, if there exists such a triple summing to $2m$ then the triple $m-n_3,m-n_2,m-n_1$ is another such triple whose sum is $m$. Finally, observe that $\tau$ swaps the $\chi^n$-isotypic component of $V(C_t)$ with the $\chi^{-n}$-isotypic component. It follows that $(\wedge^3V(C_t))^{\mu_m} = 0$ if and only if $(\wedge^3V(C_t))^{D_m} = 0$. 
\end{proof}

Below, we determine the triples $a < b < m/2$ such that $C_t$ satisfies the hypotheses of Theorem \ref{thm: griffiths vanishing}. 

\begin{example}\label{example: genus 4 mu5 cover}
{\em 
Suppose $\gcd(m,6) = 1$. As long as $m \geq 7$, the triple $1 < 2 < m-3$ corresponds to an invariant subspace of $\bigwedge^3V(C_t)$, so that Theorem \ref{thm: griffiths vanishing} does not apply.  However, when $m = 5$, the sum of any three nonzero elements of $\Z/5\Z$ is nonzero and so we find a genus $4$ family which does satisfy the criteria of Theorem \ref{thm: griffiths vanishing}, namely:
    \[y^5 = \left(\frac{x+1}{x-1}\right) \left(\frac{x+t}{x-t}\right)^2.\]
We compute that $\HH^3(J)^{\mu_5}$ is isomorphic to $\HH^1(A)(-1)$, for some abelian surface $A$ (using Proposition \ref{prop: preparation griffiths vanishing}). 
    By the Kani-Rosen formula \cite[Theorem B]{KaniRosen}, the Jacobian $J_t = \Jac(C_t)$ is isogenous to the square of the abelian surface $\Jac(C_t/\tau)$.  The Hodge conjecture is known for products of abelian surfaces \cite[Theorem 3.15]{Mari-hodgeconjectureproductsurfaces}, so $\bar{\kappa}(C_t) = 0$ by Proposition \ref{prop: preparation griffiths vanishing}. 
    }
\end{example}

\begin{example}
{\em 
    If $m$ is even, then proceeding in a manner similar to the previous example, one checks that $(\wedge^3 V(C_t))^{D_m} = 0$ implies that $m = 6$ (giving a subfamily of Example \ref{ex: genus 4 mu3 family}) or $m = 12$. In the latter case, we find the family of genus $6$ curves
     \[y^{12} = \left(\frac{x+1}{x-1}\right)^3 \left(\frac{x+t}{x-t}\right)^4,\]
    which indeed satsifies the hypotheses of Theorem \ref{thm: griffiths vanishing}.  
    }
\end{example}

\begin{example}\label{ex: genus 6 and genus 8}
{\em 
    In the remaining case where $m$ is an odd multiple of $3$, we check that the condition $(\wedge^3 V(C_t))^{D_m} = 0$ implies that $m = 9$ or $m = 15$ and moreover that one of $a$ or $b$ is $m/3$. These correspond to the genus $6$ family  
 \begin{equation}\label{eq: genus 6}
 y^9 = \left(\frac{x+1}{x-1}\right) \left(\frac{x+t}{x-t}\right)^3,
 \end{equation}
 and the genus $8$ family
  \begin{equation}\label{eq: genus 8}
  y^{15} = \left(\frac{x+1}{x-1}\right)^3 \left(\frac{x+t}{x-t}\right)^5,
  \end{equation}
        which again satisfy the hypotheses of Theorem \ref{thm: griffiths vanishing}. 
    }
\end{example}



One can check (by specializing) that the families above are generically non-hyperelliptic, so these are genuinely interesting curves satisfying $\overline{\kappa}(C) = 0$ (in some cases, conditional on the Hodge conjecture). Quotienting by the involution $\tau$, we also recover the first known examples of families of curves $C$ with $\overline{\kappa}(C) = 0$ and $\Aut(C) = 0$:

\begin{theorem}\label{thm: trivial automorphism group example}
    Assuming the Hodge conjecture, then for $g \in \{3,4\}$, there exist non-trivial one-parameter families $\{X_u\}$ of genus $g$ curves over $\C$ with $\bar{\kappa}(X_u) = 0$ and which generically satisfy $\Aut(X_u) = \{1\}$. 
\end{theorem}

\begin{proof}
    Consider the family $C_t$ in $(\ref{eq: genus 6})$, which satisfies the hypotheses of Theorem \ref{thm: griffiths vanishing}.
    The quotient $Y_t = C_t/\tau$ is a family of genus $3$ curves. Since we assume the Hodge conjecture, Theorem \ref{thm: griffiths vanishing} implies that $\overline{\kappa}(C_t) = 0$ and hence $\overline{\kappa}(Y_t) = 0$ as well.  Computing in Magma \cite{Magma}, we find that $Y_t$ is isomorphic to the curve:
    \[X_u \colon y^4 = xy^3 + u x^3 + 3y^2 - 2u x - u y - 1,\]
 where $u = (t-1)/(t+1)$. Specializing to $u = 2$, we check in Magma that the automorphism group is trivial and hence $\Aut(X_u) = \{1\}$ generically.   

 The proof when $g = 4$ is similar, using the involution on the genus $8$ family $(\ref{eq: genus 8})$. 
\end{proof}

\begin{remark}\label{rem: costa}
{\em
    Edgar Costa has verified for us, using \cite{CostaEtAl}, that the generic endomorphism algebra of $\Jac(X_u)$ is contained in $K = \Q(\zeta_9 + \zeta_9^{-1})$; by \cite[\S3]{Ellenberg2001}, this inclusion is an  equality. Similarly, in the genus $4$ family, the algebra $\End^0(\Jac(X_u))$ contains $\Q(\zeta_{15} + \zeta_{15}^{-1})$.  
}    
\end{remark}

It would be interesting to study the families in Examples \ref{ex: genus 4 mu3 family} and \ref{example: genus 4 mu5 cover}-\ref{ex: genus 6 and genus 8} in more detail, and in particular to try to prove the relevant case of the Hodge conjecture and determine the locus where the Ceresa class vanishes in the Chow group (and not just the Griffiths group). We do this for the family of Picard curves, in the next section. 


\section{Picard curves}\label{sec: Picard}

\subsection{Generalities}\label{subsec: generalities}
Let $k$ be a field of characteristic zero. 
A Picard curve over $k$ is by definition a nice curve with an  affine model $y^3 = f(x) = x^4+ax^2+bx+c$ for some $a,b,c\in k$.
Conversely, given such a polynomial $f(x) \in k[x]$ of nonzero discriminant, the projective closure of $y^3= f(x)$ in $\P^2_k$ is a nice curve denoted by $C_f$.
It has a unique point at infinity $P_{\infty}$, which is $k$-rational.

For every third root of unity $\omega \in \bar{k}$, the map $(x,y)\mapsto (x,\omega y)$ defines an automorphism of $C_{f,\bar{k}}$. We view $\mu_3$ as a subgroup of $\Aut(C_{f,\bar{k}})$ in this way. Then $\mu_3$ also acts on the Jacobian $J_{f,\bar{k}}$ by taking images of divisors. 

The discriminant of $f$ has the following expression:
\begin{align}\label{equation: disc picard curve}
    \disc(f)=-4 a^3 b^2 - 27 b^4 + 16 a^4 c + 144 a b^2 c - 128 a^2 c^2 + 256 c^3.
\end{align}
We view $f$ as the dehomogenization $F(x,1)$ of the quartic form $F(X,Z) = X^4+aX^2Z^2+bXZ^3+cZ^4$, and we
define $I(f)$ and $J(f)$ to be the usual $I$- and $J$-invariants attached to $F$, as in \cite[\S2]{bhargavashankar-2selmer}. Their explicit formulae in our case are:
\begin{align*}
    I(f) &= a^2+12c, \\
    J(f) &= 72ac-2a^3-27b^2. 
\end{align*}
The 19th century invariant theorists observed the identity $J(f)^2 = 4I(f)^3 - 27\cdot \disc(f)$, which can be verified by direct computation. Therefore $P_f:=(I(f),J(f))$ is a $k$-point on the elliptic curve \[E_f\colon y^2 = 4x^3-27\cdot \disc(f).\]

\subsection{Ceresa vanishing criteria}\label{subsec: ceresa vanishing criteria}

Since $(2g-2)P_{\infty}=4P_{\infty}$ is canonical, we may use $P_{\infty}$ to embed $C_f$ in its Jacobian $J_f$ and define the Ceresa cycle $\kappa_{C_f, P_{\infty}}\in \CH_1(J_f)$ as in the introduction; we denote it by $\kappa_f$ for simplicity.
Recall that $\kappa(C_f)$ denotes the image of $\kappa_f$ in $\CH_1(J_f)_{\Q}$ and $\bar{\kappa}(C_f)$ its image in $\Gr_1(J_f)_{\Q}$. 
Theorem \ref{thm: Picard main} follows from the following slightly stronger theorems, whose proofs will take up the rest of this section.

\begin{theorem}\label{thrm main body: picard curve vanish griffiths group}
    There exists an integer $N\geq 1$ $($depending neither on $f$ nor $k)$ such that $N\cdot \kappa_f\in \CH^2(J_f)_{\mathrm{alg}}$ for every Picard curve $C_f$ over every algebraically closed field $k$ (of characteristic zero).
\end{theorem}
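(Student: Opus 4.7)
The strategy is to combine the Hodge-theoretic criterion of Theorem \ref{thm: griffiths vanishing} with Schoen's input on the Hodge conjecture, and then bootstrap from a curve-by-curve $\Q$-coefficient statement to a uniform integer $N$ via a spreading-out and specialization argument on the moduli space of Picard curves.

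First I would reduce to the case $k = \C$. Any Picard curve $C_f/k$ is defined over a countable algebraically closed subfield $k_0\subset k$ (generated by the coefficients of $f$), and $k_0$ embeds into $\C$. By Lemma \ref{lemma: base extension chow groups}(2) applied to the algebraically closed field $k_0$, the base change $\Gr^2(J_{f,k_0})\to \Gr^2(J_{f,\C})$ is injective with integer coefficients; so if $N\cdot \kappa_f$ is algebraically trivial over $\C$, it is already algebraically trivial over $k_0$, and base change trivially propagates this up to $k$. It therefore suffices to produce a single integer $N$ that works for every Picard curve over $\C$.

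Next, for a fixed Picard curve $C_f/\C$, I would apply Proposition \ref{prop: preparation griffiths vanishing} with $G = \mu_3\subset \Aut(C_f)$. The hypothesis $\HH^0(J_f,\Omega^3_{J_f})^{\mu_3}=0$ is verified in Example \ref{example: Picard curves}, and the instance of the Hodge conjecture for $J_f\times A_f$ demanded by the proposition is exactly the algebraicity of the Weil class on an abelian fourfold established by Schoen \cite{schoen-addendumhodgeclassesselfproducts}. This yields $\bar\kappa(C_f) = 0$ in $\Gr_1(J_f)_\Q$ for every Picard curve over $\C$, i.e.\ $\kappa_f \in \CH^2(J_f)_{\mathrm{alg},\Q}$ fiberwise. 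To upgrade to a uniform integer $N$, let $B\subset \A^3_\Q$ be the open subscheme where $\disc(f)\neq 0$, let $\mathcal{J}\to B$ be the universal Jacobian, and let $\kappa\in \CH^2(\mathcal{J})$ be the universal Ceresa cycle formed with respect to the section $P_\infty$. Lemma \ref{lemma: locus of rational triviality countable union} presents the locus of $b\in B_\C$ where $\kappa_b$ is algebraically trivial with $\Q$-coefficients as a countable union of closed subvarieties; by the previous sentence this locus contains $B(\C)$, and since $B_\C$ is irreducible and $\C$ is uncountable one of the subvarieties must equal all of $B_\C$. Hence $\kappa_\eta\in \CH^2(\mathcal{J}_\eta)_{\mathrm{alg},\Q}$ at the generic point $\eta$, so there is a single integer $N\geq 1$ with $N\cdot \kappa_\eta\in \CH^2(\mathcal{J}_\eta)_{\mathrm{alg}}$ integrally.

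Finally, for an arbitrary $b\in B(\C)$ I would propagate integral algebraic triviality through a specialization argument. After spreading out the witness of algebraic triviality at $\eta$ over a dense open $U\subset B$, I would choose a smooth curve $\gamma\subset B_\C$ passing through $b$ and meeting $U$, and set $R = \mathcal{O}_{\gamma,b}$, a DVR with fraction field $K$ and residue field $\C$. Base-change of $N\cdot \kappa$ to $\Spec R$ then has algebraically trivial generic fiber in $\CH^2(\mathcal{J}_K)$ (integrally, since base change preserves algebraic triviality), while the remark following Lemma \ref{lemma: specialization morphism} applies because $K$ is perfect and the residue field $\C$ is algebraically closed, so specialization transports integral algebraic triviality to the special fiber $N\cdot \kappa_b$. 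The main obstacle is precisely this coupling of rational and integral equivalence: every ingredient in the Hodge-theoretic input operates with $\Q$-coefficients, and the ability to transport a single denominator $N$ from the generic fiber of the moduli space to every closed fiber without picking up further denominators rests exactly on the integral refinement of the specialization lemma in the algebraically-closed-residue-field case.
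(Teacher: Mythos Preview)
Your proposal is correct and follows essentially the same approach as the paper: invoke Proposition \ref{prop: preparation griffiths vanishing} together with Schoen's Hodge conjecture input to get $\bar\kappa(C_f)=0$ for each Picard curve over $\C$, deduce a single denominator $N$ at the generic point of the parameter space, and then propagate that $N$ to every fiber. The only differences are organizational. The paper works over the $\Q$-scheme $S_0\subset \A^3_\Q$, embeds the function field of its generic point into $\C$ directly to apply Corollary \ref{corollary: ceresa vanishes griffiths group picard curve}, and then runs a Noetherian induction on $S_0$ (getting integers $N_0,N_1,\dots$ on a terminating chain of opens) to produce the uniform $N$. You instead reduce to $\C$ first, reach the generic point of $B_\C$ via the Baire-type argument through Lemma \ref{lemma: locus of rational triviality countable union}, and replace the Noetherian induction by a one-step DVR specialization along a curve through $b$, invoking the integral refinement in the remark after Lemma \ref{lemma: specialization morphism}. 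Both routes are standard and yield the same conclusion; your variant is perhaps slightly more hands-on about the integrality, while the paper's Noetherian induction avoids having to pick auxiliary curves.
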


\begin{theorem}\label{thm: picard vanishing chow group}
    The Ceresa cycle $\kappa_f\in \CH_1(J)$ is torsion if and only if $P_f\in E_f(k)$ is torsion.
    Moreover, if $k$ is algebraically closed, there exists an integer $M\geq 1$ with the following property: if $C_f$ is a Picard curve and $\kappa_f$ is torsion, then $\ord(\kappa_f)$ divides $M\cdot \ord(P_f)$ and $\ord(P_f)$ divides $M\cdot \ord(\kappa_f)$.
\end{theorem}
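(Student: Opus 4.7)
The plan is to reduce the question to an analogous question about a point on the elliptic curve $E_f$, via an Abel--Jacobi map into an abelian variety isogenous to $E_f$. By Theorem \ref{thrm main body: picard curve vanish griffiths group} there is a universal $N$ such that $N\kappa_f \in \CH^2(J_f)_{\mathrm{alg}}$. Combining Proposition \ref{prop: preparation griffiths vanishing} with the instance of the Hodge conjecture proved by Schoen (already invoked in Theorem \ref{thrm main body: picard curve vanish griffiths group}), we obtain a motivic isomorphism $\frak{h}^3(J_f)^{\mu_3} \simeq \frak{h}^1(A_f)(-1)$, and via K\"unnemann's Lefschetz isomorphism an induced identification $\frak{h}^{2g-3}(J_f)^{\mu_3} \simeq \frak{h}^1(A_f)(-g+2)$. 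Since Proposition \ref{proposition: C1 lies in specific submotive} places $[C_f]_1$ in $\CH^{g-1}(\frak{h}^{2g-3}(J_f)^{\mu_3})$, and since $\CH^1(\frak{h}^1(A_f))$ sits inside $A_f(k)\otimes\Q$ via the classical AJ map (Lemma \ref{lemma: Griffiths group vanishes for h^1} ensuring no information is lost), we extract a class $\sigma(f) \in A_f(k)_{\Q}$ for which $\kappa_f$ is torsion iff $\sigma(f)=0$, with $\ord(\kappa_f)$ and $\ord(\sigma(f))$ related by a multiplicative factor depending only on $N$.

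The next step is to identify $A_f$ with $E_f$ in a way compatible with variation of $f$. Using the $\mu_3$-equivariant decomposition $\HH^0(C_f,\Omega_{C_f}) \simeq \chi \oplus \chi \oplus \chi^2$ from Example \ref{example: Picard curves}, the Hodge structure $\HH^3(J_f)^{\mu_3}$ is two-dimensional of type $(1,2)+(2,1)$, and a direct period calculation matches its polarized Hodge structure with $\HH^1(E_f)(-1)$; the classical identity $J(f)^2 = 4 I(f)^3 - 27\disc(f)$ is precisely what makes $E_f$ visible at this level. I then spread out over the base $B := \Spec \Q[a,b,c, \disc(f)^{-1}]$: there is a universal Picard family $\mathcal{C}\to B$, an elliptic scheme $\mathcal{E}\to B$ with fibre $E_f$ above $f$, and two algebraic sections of $\mathcal{E}/B$, namely $P: f \mapsto P_f$ and the AJ section $\sigma : f \mapsto \sigma(f)$. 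Crucially, $\mathcal{E}\to B$ is the pullback of an elliptic scheme $\mathcal{E}'\to \Spec\Q[D,D^{-1}]$ along $f \mapsto \disc(f)$, so it is isotrivial along every slice $B_D := \{\disc(f)=D\}$.

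The final step is to compare $\sigma$ and $P$ via the bielliptic locus and isotriviality. The computation of \cite{LagaShnidman-CeresaBiellipticPicard} shows that on $\{b=0\} \subset B$ one has $\sigma(f) = \lambda_0 P_f$ for a fixed nonzero $\lambda_0\in\Q^\times$. Hence $\tau := \sigma - \lambda_0 P$ is an algebraic section of $\mathcal{E}\to B$ vanishing on $\{b=0\}$. On any slice $B_D$, since $\mathcal{E}|_{B_D}$ is isotrivial with constant fibre $E_D$, any section of $\mathcal{E}|_{B_D}$ is constant on each geometric component. A short check using \eqref{equation: disc picard curve} shows that for generic $D$ the slice $B_D$ is geometrically irreducible and meets $\{b=0\}$ nontrivially; therefore $\tau|_{B_D}\equiv 0$, and varying $D$ gives $\tau \equiv 0$ on a Zariski-dense subset of $B$, hence on all of $B$. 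This yields $\sigma = \lambda_0 P$ globally, proving the equivalence and providing the quantitative relation with $M$ determined by $N$, the denominator of $\lambda_0$, and a uniform bound on the isogeny $A_f \to E_f$.

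The principal obstacle is ensuring that the motivic isomorphism of the first paragraph is sufficiently canonical and spreadable: one must realize it by a relative algebraic cycle over $B$ so that $\sigma$ is a genuine algebraic section of $\mathcal{E}\to B$, which requires combining Schoen's theorem at the generic point with the specialization arguments of Lemmas \ref{lemma: base extension chow groups} and \ref{lemma: specialization morphism}. Secondary technical points are the irreducibility/transversality check for $B_D \cap \{b=0\}$, and securing a uniform bound on the degree of the isogeny $A_f \sim E_f$ to obtain a universal $M$.
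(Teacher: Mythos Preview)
Your overall architecture---reduce to a section $\sigma$ of the family $\mathcal{E}\to B$ and compare it with $P$---matches the paper's, but the comparison step fails. The claim ``since $\mathcal{E}|_{B_D}$ is isotrivial with constant fibre $E_D$, any section of $\mathcal{E}|_{B_D}$ is constant on each geometric component'' is false, and the section $P$ itself is a counterexample. Indeed, $\mathcal{E}|_{B_D}$ is the constant family $E_D\times B_D$, so a section is just a morphism $B_D\to E_D$; the map $f\mapsto P_f=(I(f),J(f))$ is such a morphism, and it is visibly non-constant on the surface $B_D$ (for instance $I(f)=a^2+12c$ is not constant along $\{\disc(f)=D\}$). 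Hence $\tau=\sigma-\lambda_0 P$ may well be a nonzero section vanishing on $\{b=0\}\cap B_D$, and your propagation argument collapses. More generally, isotrivial (even constant) abelian schemes over an affine base of positive dimension typically have infinite-rank Mordell--Weil groups, so no pointwise vanishing on a divisor forces global vanishing.

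The paper closes this gap in a different way: it computes the Mordell--Weil group of $\mathcal{E}\to S$ directly (Proposition~\ref{proposition: group of sections of calE}) by restricting to the \emph{anisotropic} one-dimensional slices $T_s=\{(at,bt,ct^2)\}$, on which $\mathcal{E}|_{T_s}$ is a genuinely non-isotrivial rational elliptic surface. Tate's algorithm plus the Shioda--Tate formula show the sections over each $T_s$ form a free $\mathcal{O}$-module of rank~$1$, and varying $s$ forces the global group of sections to contain $\mathcal{O}\cdot P$ with finite index. This replaces your failed constancy claim by a rank computation, and then $\sigma$ is forced to satisfy $M\sigma=\gamma P$ for some $M\in\Z_{\geq 1}$, $\gamma\in\mathcal{O}\setminus\{0\}$.

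Your ``principal obstacle''---producing $\sigma$ as an \emph{algebraic} section by spreading out Schoen's correspondence---is also more serious than you indicate: Schoen's cycles are constructed pointwise and there is no evident mechanism to interpolate them into a relative cycle over $B$. The paper sidesteps this entirely by working with the analytic intermediate Jacobian $\mathrm{J}(\mathbb{V}^{\mu_3})$ (where the normal function is automatically holomorphic), identifying it with $\mathcal{E}(\C)$ up to isogeny via a Kummer-theory classification of $\mu_6$-torsors on $S$ (Proposition~\ref{proposition: identification complex tori}), and then invoking \cite{achteretal-normalfunctionsalgebraic} together with Theorem~\ref{thrm main body: picard curve vanish griffiths group} to conclude that the normal function is algebraic. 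Your proposed route through relative Chow motives would need a substantially new argument here.
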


Theorem \ref{thrm main body: picard curve vanish griffiths group} will be proven in \S\ref{subsec: weil classes and Schoens theorem}, and Theorem \ref{thm: picard vanishing chow group} will be proven in \S\ref{subsec: proof of the vanishing criterion picard chow group}.
A standard argument using Lemma \ref{lemma: base extension chow groups} shows that we may assume $k=\C$.
So in the remainder of \S\ref{sec: Picard}, all varieties will be over $\C$, and cohomology will be singular cohomology.

\begin{remark}
    {\em Theorem \ref{thm: picard vanishing chow group} generalizes \cite[Theorem 5.16]{LagaShnidman-CeresaBiellipticPicard}, which considered the special case where $b = 0$. There, we exploited the bielliptic cover to show that the Ceresa cycle maps via a correspondence to a multiple of the point $Q_f = (a^2 -4c,a(a^2 -4c))$ on the elliptic curve $E_f' \colon y^2 = x^3 +16 \cdot \disc(f)$. This is compatible with the general case since there is a $3$-isogeny $\phi_f \colon E'_f \rightarrow E_f$, and one checks using the explicit formula for $\phi_f$ \cite[Equation (2)]{BhargavaElkiesShnidman} that $\phi_f(Q_f) = P_f$.   }
\end{remark}
\begin{remark}
{\em
    Is it always the case that $\kappa_f \neq 0$? (Recall that $\kappa_f$ lies in the Chow group with $\Z$-coefficients.) We cannot conclude this from our proof of Theorem \ref{thm: picard vanishing chow group} below since we have worked with $\Q$-coefficients, and we make use of various isogenies whose degrees we do not control.
    }
\end{remark}

\subsection{Multilinear algebra}\label{subsec: multilinear algebra}

Our first goal (Proposition \ref{proposition: wedge3 iso to ell curve as hodge structures}) is to explicitly identify the abelian variety $A$ of Proposition \ref{prop: preparation griffiths vanishing} for Picard curves.

Write $\calO = \Z[\omega]$ for the ring of Eisenstein integers with $\omega^2+ \omega +1=0$ and let $K=\Q(\sqrt{-3})$ be its fraction field. 
Let $C$ be a Picard curve over $\C$ with Jacobian variety $J$.
The $\mu_3$-action on $C$ extends to an embedding $\calO \subset \End(J)$. 
Using this action, the singular cohomology group $\HH^1(J;\Z)$ is a free $\calO$-module of rank $3$, and $\HH^1(J;\Q)$ is a $3$-dimensional $K$-vector space. 
The next lemma says that the criterion of Theorem \ref{thm: griffiths vanishing} is always satisfied for Picard curves.

\begin{lemma}\label{lemma: mu3-invariants type (1,2)+(2,1)}
    $\HH^0(J, \Omega_J^3)^{\mu_3} = 0$ and the Hodge structure $\HH^3(J;\Q)^{\mu_3}$ is of type $(1,2) +(2,1)$. 
\end{lemma}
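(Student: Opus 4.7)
The plan is a direct character calculation using the Hodge decomposition $\HH^3(J;\C) = \bigoplus_{p+q=3} \HH^{p,q}(J)$ together with the identification $\HH^{p,q}(J) = \bigwedge^p \HH^{1,0}(J) \otimes \bigwedge^q \HH^{0,1}(J)$ as $\mu_3$-representations, and the fact that $\HH^{1,0}(J) = \HH^0(C, \Omega_C^1)$ via pullback along the Abel--Jacobi map.

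First I would make the $\mu_3$-action on $\HH^0(C, \Omega_C^1)$ completely explicit via the standard basis $\omega_1 = dx/y,\ \omega_2 = x\,dx/y,\ \omega_3 = dx/y^2$. The generator $\sigma \colon (x,y)\mapsto (x,\omega y)$ acts by $\omega^{-1}$ on $\omega_1,\omega_2$ and by $\omega^{-2}$ on $\omega_3$. Letting $\chi,\chi^2$ denote the two nontrivial characters of $\mu_3$, with $\chi(\sigma)=\omega^{-1}$, this gives an isomorphism of $\mu_3$-modules
\[
W := \HH^{1,0}(J) \simeq \chi^{\oplus 2} \oplus \chi^2, \qquad \overline{W} = \HH^{0,1}(J) \simeq (\chi^2)^{\oplus 2}\oplus \chi,
\]
where the description of $\overline{W}$ follows from the fact that $\mu_3$ acts on $\HH^1(J;\Q)$ via a $\Q$-linear action, so complex conjugation commutes with $\mu_3$ and swaps $\chi$ with $\chi^2$.

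Next I would compute $\bigwedge^3 W$. Since $W_\chi$ is $2$-dimensional and $W_{\chi^2}$ is $1$-dimensional, the only surviving summand is $\bigwedge^2 W_\chi \otimes W_{\chi^2}$, yielding $\HH^0(J,\Omega_J^3) = \bigwedge^3 W \simeq \chi^2 \cdot \chi^2 = \chi$ as a $\mu_3$-module (using $\chi^3 = \mathbf 1$). Since $\chi$ is nontrivial, this already gives $\HH^0(J,\Omega_J^3)^{\mu_3}=0$ and by conjugation $\HH^{0,3}(J)^{\mu_3}=0$. For the Hodge-type statement it remains to observe that the $(2,1)+(1,2)$ summand of $\HH^3(J;\C)^{\mu_3}$ is automatically a sub-Hodge structure of the asserted type, so the first claim $\HH^{3,0}(J)^{\mu_3}=\HH^{0,3}(J)^{\mu_3}=0$ already implies the second. (As a sanity check one can expand $\bigwedge^2 W\otimes\overline W$ to see that $\HH^{2,1}(J)^{\mu_3}$ is $1$-dimensional, matching $\HH^{1,2}(J)^{\mu_3}$ by conjugation.)

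There is essentially no obstacle; the only point requiring care is tracking the characters and their multiplicities correctly in $W$ versus $\overline{W}$, and invoking the fact that the $\mu_3$-action on rational cohomology commutes with complex conjugation (so that the character of $\overline W$ is the conjugate of the character of $W$).
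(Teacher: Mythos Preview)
Your approach is the same as the paper's: determine the $\mu_3$-characters on $\HH^0(C,\Omega_C^1)$ via an explicit basis of holomorphic differentials, take $\bigwedge^3$, and then read off the Hodge type from the Hodge decomposition. The paper's proof is terser (it just cites the decomposition $\chi\oplus\chi\oplus\chi^2$ from Example~\ref{example: Picard curves} and invokes the Hodge decomposition), but the content is identical.

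There is, however, a small slip in your explicit basis. For the plane quartic $Y^3Z = X^4 + aX^2Z^2 + bXZ^3 + cZ^4$ the point at infinity is $P_\infty=[0:1:0]$; in the local parameter $s$ there one has $x\sim s^{-3}$, $y\sim s^{-4}$, so $x\,dx/y\sim s^{-3}\,ds$ has a pole of order~$3$ and is \emph{not} holomorphic. A correct basis of $\HH^0(C,\Omega_C^1)$ is $\{dx/y,\ dx/y^2,\ x\,dx/y^2\}$, on which $\sigma$ acts by $\omega^{-1},\omega^{-2},\omega^{-2}$; in your notation this gives $W\simeq \chi\oplus(\chi^2)^{\oplus 2}$ rather than $\chi^{\oplus 2}\oplus\chi^2$. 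This does not affect the argument: $\bigwedge^3 W$ is still a one-dimensional nontrivial character (now $\chi^2$ instead of $\chi$), so $\HH^0(J,\Omega_J^3)^{\mu_3}=0$, and your deduction of the Hodge type, as well as the sanity check that $\dim\HH^{2,1}(J)^{\mu_3}=1$, go through verbatim with the corrected multiplicities.
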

\begin{proof}
    Since $\HH^0(J, \Omega_J^3) \simeq \bigwedge^3 \HH^0(C, \Omega_C^1)$, the first claim follows from a calculation with differentials (Example \ref{example: Picard curves}). The second claim follows from the Hodge decomposition for $\HH^3(J;\Q)$.
\end{proof}

We may view $\HH^1(J;\Q)$ either as a $K$-vector space or $\Q$-vector space; when we perform tensor operations, we will add the subscript $K$ when we view it as a $K$-vector space, and add no subscript otherwise.
For example, cup product induces an isomorphism $\bigwedge^3 \HH^1(J;\Q) \simeq \HH^3(J;\Q)$, and we will use this identification without further mention.

The universal property of exterior powers induces a canonical $\Q$-linear surjection $\bigwedge^3 \HH^1(J;\Q) \rightarrow \bigwedge^3_K \HH^1(J;\Q)$.
It is well known (see \cite[Lemma 12(i)]{moonenzarhin-weilclasses} or \cite[Lemma 4.3]{Delignemilne-hodgecyclesmotives}) that this map admits a canonical splitting, which we use to view $\bigwedge_K^3 \HH^1(J;\Q)$ as a direct summand of $\HH^3(J;\Q)$.
\begin{lemma}\label{lem:wedge = invariants}
    We have $\HH^3(J;\Q)^{\mu_3}= \bigwedge_K^3 \HH^1(J;\Q)$ inside $\HH^3(J;\Q)$.
    Moreover $\dim_{\Q} \HH^3(J;\Q)^{\mu_3}=2$.
\end{lemma}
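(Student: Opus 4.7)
The plan is to establish $\bigwedge_K^3 V \subset \HH^3(J;\Q)^{\mu_3}$ (where $V = \HH^1(J;\Q)$) and then show both sides have $\Q$-dimension equal to $2$, so that equality holds and the dimension statement follows automatically.

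For the inclusion, the key observation is that the generator of $\mu_3$ acts on $V$ as multiplication by $\omega \in K$ via the inclusion $\mu_3 \subset \calO^\times$. It therefore acts on $\bigwedge_K^3 V$ as multiplication by $\omega^3 = 1$, i.e.\ trivially. The canonical splitting $\bigwedge_K^3 V \hookrightarrow \bigwedge^3 V$ is functorial in $K$-linear endomorphisms (this is how it is constructed in the Moonen--Zarhin and Deligne--Milne references cited just above the lemma), and is in particular $\mu_3$-equivariant, so $\bigwedge_K^3 V$ lands inside the $\mu_3$-invariants.

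For the dimension count, $\dim_\Q \bigwedge_K^3 V = 2 \cdot \dim_K \bigwedge_K^3 V = 2$. To bound $\dim_\Q \HH^3(J;\Q)^{\mu_3}$ from above by $2$, I would extend scalars to $\C$ and use the eigenspace decomposition $V \otimes_\Q \C = V^+ \oplus V^-$, where $V^\pm$ are the $3$-dimensional eigenspaces for the two primitive cube roots of unity (corresponding to the two embeddings of $K$ into $\C$). This yields
\[
\bigwedge^3(V \otimes_\Q \C) = \bigoplus_{a+b=3}\bigwedge\nolimits^a V^+ \otimes_\C \bigwedge\nolimits^b V^-,
\]
and the generator of $\mu_3$ acts on the $(a,b)$-summand by the scalar $\omega^{a+2b}$. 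Using $a+b=3$, invariance becomes $a \equiv 0 \pmod 3$, leaving only the summands $\bigwedge^3 V^+$ and $\bigwedge^3 V^-$, each $1$-dimensional over $\C$. Hence $\dim_\C (\HH^3(J;\Q)^{\mu_3} \otimes_\Q \C) \leq 2$, and combined with the inclusion above, both assertions follow.

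There is no serious obstacle; the argument is a direct multilinear-algebra computation. The only mildly subtle point is the equivariance of the canonical splitting under the $\mu_3$-action, but this is automatic once one notes that $\mu_3$ acts on $V$ through $K^\times$, so every $\mu_3$-element is tautologically $K$-linear.
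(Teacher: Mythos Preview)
Your proof is correct and follows essentially the same approach as the paper: both arguments pass to $\C$, decompose $\HH^1(J;\C)$ into the two $3$-dimensional eigenspaces for $\mu_3$, and identify the $\mu_3$-invariants in $\bigwedge^3$ as the two top exterior powers of these eigenspaces. The only cosmetic difference is that the paper shows directly that $(\bigwedge^3_K V)\otimes_K \C$ equals this same $2$-dimensional subspace, whereas you first establish the inclusion $\bigwedge^3_K V \subset (\bigwedge^3 V)^{\mu_3}$ over $\Q$ via equivariance of the splitting and then match dimensions; both routes are equally valid.
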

\begin{proof}
    It suffices to prove the statements after tensoring with $\C$.
    Let $g\in \mu_3\subset \Aut(C)$ be a nontrivial element. 
    The action of $g$ on $\HH^1(J;\Q)$ has eigenvalues (with multiplicity) $\omega, \omega, \omega, \omega^2, \omega^2, \omega^2$ so we can write $\HH^1(J;\C) = V_1 \oplus V_2$ where $V_i$ is the $\omega^i$-eigenspace. 
    Since a three element subset of these eigenvalues have product $1$ if and only if they are all equal, $\HH^3(J;\C)^{\mu_3} = (\bigwedge^3 V_1) \oplus (\bigwedge^3 V_2)$.
    On the other hand, an argument similar to the proof of \cite[Proposition 4.4]{Delignemilne-hodgecyclesmotives} shows $(\bigwedge^3_K \HH^1(J;\Q))\otimes_{K} \C = \bigwedge^3_{K\otimes \C} \HH^1(J;\C) = (\bigwedge^3 V_1) \oplus (\bigwedge^3 V_2)$, proving the equality. 
    The explicit description of this subspace shows that it is $2$-dimensional over $\Q$.
\end{proof}

Let $E$ be the elliptic curve with Weierstrass equation $y^2 = x^3+1$.

\begin{proposition}\label{proposition: wedge3 iso to ell curve as hodge structures} 
    There is an isomorphism of Hodge structures $\HH^3(J;\Q)^{\mu_3} \simeq \HH^1(E,\Q)(-1)$.
\end{proposition}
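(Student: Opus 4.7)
My approach is to use the preceding lemmas to pin down the structure of $V := \HH^3(J;\Q)^{\mu_3}$ abstractly, and then identify it via a CM argument. First, Lemmas \ref{lemma: mu3-invariants type (1,2)+(2,1)} and \ref{lem:wedge = invariants} tell us that $V$ is a polarizable $\Q$-Hodge structure of dimension $2$ and type $(1,2)+(2,1)$ (the polarization being inherited from the natural one on $\HH^3(J;\Q)$). Consequently $V(-1)$ is a polarizable weight-$1$ Hodge structure of dimension $2$ and type $(0,1)+(1,0)$, so by the Riemann--Hodge correspondence there exists an elliptic curve $A/\C$, unique up to isogeny, with $\HH^1(A;\Q) \simeq V(-1)$ as rational Hodge structures. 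It remains only to identify $A$ up to isogeny.

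Next I would exploit the $K$-module structure. By Lemma \ref{lem:wedge = invariants}, $V = \bigwedge_K^3 \HH^1(J;\Q)$ naturally carries an action of $K = \Q(\sqrt{-3})$. Since the $\mu_3$-action on $J$ is algebraic, the induced $K$-action on $\HH^1(J;\Q)$, and hence on $V$, is by morphisms of Hodge structures. Transferring via the isomorphism $V(-1) \simeq \HH^1(A;\Q)$ yields a $K$-action on $\HH^1(A;\Q)$ by Hodge endomorphisms, which forces $A$ to have CM by an order in $K$.

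Finally, since $K$ has class number $1$, any two elliptic curves over $\C$ with CM by an order in $K$ are isogenous. The curve $E: y^2 = x^3+1$ has CM by $\calO = \Z[\omega]$ via $(x,y) \mapsto (\omega x, y)$, so $A$ is isogenous to $E$. Isogenous elliptic curves have isomorphic rational $\HH^1$, so $\HH^1(A;\Q) \simeq \HH^1(E;\Q)$, and twisting by $(-1)$ produces the claimed isomorphism. The only real point to verify carefully is that the $K$-action on $V$ preserves the Hodge structure, but this is formal: it arises from endomorphisms of $J$, and the canonical splitting of $\bigwedge^3 \to \bigwedge_K^3$ is itself defined in terms of this $K$-action.
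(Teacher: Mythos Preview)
Your argument is essentially the paper's: both show that a $2$-dimensional rational Hodge structure of type $(0,1)+(1,0)$ carrying a $K$-action is unique up to isomorphism, and that both $\HH^1(E;\Q)$ and the appropriate Tate twist of $\HH^3(J;\Q)^{\mu_3}$ have this property. Note, however, a sign slip in your Tate twists: since $V = \HH^3(J;\Q)^{\mu_3}$ has weight $3$ and type $(1,2)+(2,1)$, it is $V(1)$ (not $V(-1)$) that has weight $1$ and type $(0,1)+(1,0)$; correcting this throughout, your final twist indeed yields $V \simeq \HH^1(E;\Q)(-1)$.
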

\begin{proof}
    Since there exists a unique elliptic curve up to isogeny with endomorphism algebra $K$, there exists a unique Hodge structure of dimension $2$, type $(0,1)+(1,0)$ and carrying an action of $K$.
    Since both $\HH^3(J;\Q)^{\mu_3}$ and $\HH^1(E;\Q)$ have these properties (the former by Lemma \ref{lem:wedge = invariants}), they must be isomorphic, and we conclude using Lemma \ref{lem:wedge = invariants}.
\end{proof}

\subsection{Weil classes and Schoen's theorem}\label{subsec: weil classes and Schoens theorem}

Our next goal is to upgrade the isomorphism 
of Proposition \ref{proposition: wedge3 iso to ell curve as hodge structures} 
to an isomorphism in the category of Chow motives, and hence deduce (using Proposition \ref{prop: preparation griffiths vanishing}) the vanishing of $\bar{\kappa}(C)$ in the Griffiths group.
We use the following special case of a result of Schoen, which crucially uses the assumption that the endomorphism algebra of $J\times E$ contains $K=\Q(\omega)$:
\begin{theorem}[Schoen]\label{thm: schoen}
    The Hodge conjecture holds for $J\times E$.
\end{theorem}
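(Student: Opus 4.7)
My plan is to reduce Schoen's theorem to his construction of algebraic cycles representing Weil classes on abelian varieties with Eisenstein multiplication.

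First, I would analyze the space of Hodge classes on $J \times E$ via the K\"unneth formula. The Hodge conjecture is known for each factor separately: trivially for $E$, and for the Picard Jacobian $J$ by classical results on abelian threefolds with CM by $K$. Standard arguments then show that all Hodge classes on $J \times E$ arising as external products of Hodge classes from $J$ and $E$ are algebraic. The remaining Hodge classes are controlled by the additional $K$-linear structure on $\HH^1(J \times E;\Q)$ coming from the actions $K \hookrightarrow \End^0(J)$ and $K \hookrightarrow \End^0(E)$. Using the Hodge decomposition and the Mumford--Tate group of $J \times E$, one identifies the ``new'' Hodge classes with Weil classes, which in our setting are concentrated in a low-dimensional piece of $\bigwedge_K^* \HH^1(J \times E;\Q)$ of the appropriate Hodge type. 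A concrete calculation using Example \ref{example: Picard curves} together with the CM type of $E$ shows that the signature of the $K$-action on $\mathrm{Lie}(J \times E)$ is exactly the one required for Weil classes to occur.

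Second, I would invoke Schoen's theorem from \cite{schoen-addendumhodgeclassesselfproducts}, which constructs explicit algebraic cycles representing Weil classes on abelian varieties of precisely this type. The construction exploits the geometry of the triple cover $C \to \P^1$ underlying the Picard curve, together with the analogous triple cover defining $E$ (a quotient of a Fermat-type cubic). Concretely, one builds codimension-two cycles on $J \times E$ via correspondences coming from products of these triple covers, and checks that the resulting cohomology classes lie in and span the Weil subspace.

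The main obstacle is the final spanning step: one must match the signature of the $K$-action on $\mathrm{Lie}(J \times E)$ with the precise hypothesis of Schoen's theorem, and then identify the cohomology classes of his algebraic cycles with generators of the entire Weil subspace (rather than merely a proper subspace). Markman's recent work \cite{Markman-monodromygeneralized} provides an alternative route via monodromy arguments which could serve either as a replacement for Schoen's construction or as an independent verification.
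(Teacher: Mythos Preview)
Your proposal and the paper converge on the same essential input---Schoen's algebraicity of Weil classes for $K=\Q(\omega)$---but the paper reaches it more directly. Rather than analyzing the K\"unneth pieces and invoking the Mumford--Tate group, the paper simply observes that $J\times E$ is an abelian \emph{fourfold} and quotes Moonen--Zarhin \cite[Theorem~(0.1)(i),(iv)]{MoonenZarhin-hodgeclassesabelianvarietieslowdimension}: every Hodge class in $\HH^4(J\times E;\Q)$ is a sum of products of divisor classes and elements of the Weil space $W_K=\bigwedge^4_K\HH^1(J\times E;\Q)$. This replaces your first paragraph in one stroke, and in particular you never need the Hodge conjecture for $J$ separately. (Incidentally, your phrase ``abelian threefolds with CM by $K$'' is inaccurate: $J$ has $K$-multiplication but not complex multiplication, since $[K:\Q]=2<2\dim J$.)

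There is one genuine slip in your signature computation. From Example~\ref{example: Picard curves} the $\mu_3$-action on $\HH^0(C,\Omega_C^1)$ has type $\chi\oplus\chi\oplus\chi^2$, so the $K$-action on $\mathrm{Lie}(J)$ has signature $(2,1)$; the natural CM action on $E$ contributes $(1,0)$, giving combined signature $(3,1)$, \emph{not} the balanced $(2,2)$ required for Weil classes to be Hodge and for Schoen's theorem to apply. The paper handles this by noting that the embedding $K\hookrightarrow\End(J\times E)\otimes\Q$ \emph{can be chosen} to have signature $(2,2)$---one composes with complex conjugation on the $E$-factor. Without this adjustment your ``concrete calculation'' gives the wrong answer. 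Your description of Schoen's construction as explicit correspondences from the triple covers is also somewhat speculative; the argument in \cite{schoen-addendumhodgeclassesselfproducts} is closer to a deformation/specialization argument than a direct cycle construction from $C\to\P^1$.
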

\begin{proof}
    Since $J\times E$ is four-dimensional, it suffices to prove Hodge classes in $\HH^4(J\times E;\Q)$ are algebraic.
    By \cite[Theorem (0.1), Part (i) and (iv)]{MoonenZarhin-hodgeclassesabelianvarietieslowdimension}, such Hodge classes are sums of products of divisor classes (which are algebraic) and Weil classes $W_K := \bigwedge^4_K \HH^4(J\times E;\Q) \subset \HH^4(J\times E;\Q)$.
    Since $K= \Q(\omega)$ and the embedding $K\subset \End(J\times E)\otimes \Q$ can be chosen to have signature $(2,2)$, Schoen has shown in \cite{schoen-addendumhodgeclassesselfproducts} that the classes in $W_K$ are algebraic, concluding that all Hodge classes of $\HH^4(J\times E;\Q)$ are algebraic.
\end{proof}

Recall from \S\ref{subsec: chow motives} our conventions on motives, the canonical Chow--K\"unneth components $\frak{h}^i(J)$ and $\frak{h}^j(E)$, and the motive of fixed points of a finite group action.

\begin{corollary}\label{cor: isom of motives}
    There is an isomorphism $\frak{h}^3(J)^{\mu_3} \simeq  \frak{h}^1(E)(-1)$.
\end{corollary}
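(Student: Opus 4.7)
The plan is to apply Proposition \ref{prop: preparation griffiths vanishing} with $G = \mu_3$ and to identify the resulting abelian variety with $E$, using Schoen's theorem to supply the Hodge-conjecture hypothesis.

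First, I would check the vanishing $\HH^0(J, \Omega_J^3)^{\mu_3}=0$ required by Proposition \ref{prop: preparation griffiths vanishing}; this is exactly the first part of Lemma \ref{lemma: mu3-invariants type (1,2)+(2,1)}. That proposition then produces some abelian variety $A$ together with an isomorphism of rational Hodge structures $\HH^3(J;\Q)^{\mu_3} \simeq \HH^1(A;\Q)(-1)$. Next I would invoke Proposition \ref{proposition: wedge3 iso to ell curve as hodge structures} to identify the left-hand side with $\HH^1(E;\Q)(-1)$. Consequently $A$ and $E$ have isomorphic rational Hodge structures of type $(0,1)+(1,0)$ and are thus isogenous elliptic curves, so $\frak{h}^1(A) \simeq \frak{h}^1(E)$ as Chow motives with $\Q$-coefficients, and the Hodge conjecture for $J\times A$ is equivalent to that for $J\times E$.

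Finally, Theorem \ref{thm: schoen} supplies the Hodge conjecture for $J\times E$, so the conclusion of Proposition \ref{prop: preparation griffiths vanishing} yields the chain of isomorphisms $\frak{h}^3(J)^{\mu_3} \simeq \frak{h}^1(A)(-1) \simeq \frak{h}^1(E)(-1)$. The only mildly subtle point is matching $A$ with $E$, which is purely formal once Propositions \ref{prop: preparation griffiths vanishing} and \ref{proposition: wedge3 iso to ell curve as hodge structures} are in hand. Alternatively, one could bypass the intermediate $A$ by mimicking the proof of Proposition \ref{prop: preparation griffiths vanishing} directly with $E$: Proposition \ref{proposition: wedge3 iso to ell curve as hodge structures} supplies mutually inverse Hodge-theoretic isomorphisms between $\HH^3(J;\Q)^{\mu_3}$ and $\HH^1(E;\Q)(-1)$, Schoen's theorem lifts them to morphisms in $\mathsf{Mot}(\C)$, and Kimura finite-dimensionality of both $\frak{h}^3(J)^{\mu_3}$ and $\frak{h}^1(E)(-1)$ (as summands of motives of abelian varieties) forces these lifts to be mutually inverse motivic isomorphisms. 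The hard input is Schoen's theorem; the corollary is otherwise a formal consequence.
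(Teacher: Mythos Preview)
Your proposal is correct and follows exactly the paper's approach: invoke Proposition~\ref{prop: preparation griffiths vanishing} with $G=\mu_3$, identify the abelian variety with $E$ via Proposition~\ref{proposition: wedge3 iso to ell curve as hodge structures}, and supply the Hodge-conjecture input through Theorem~\ref{thm: schoen}. The extra care you take in matching the abstract $A$ with $E$ (and the alternative of running the argument directly with $E$) is sound and simply spells out what the paper's one-line proof leaves implicit.
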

\begin{proof}
    This follows from Proposition \ref{prop: preparation griffiths vanishing}, using Proposition \ref{proposition: wedge3 iso to ell curve as hodge structures} and Theorem \ref{thm: schoen}.
\end{proof}


\begin{corollary}\label{corollary: ceresa vanishes griffiths group picard curve}
    If $C$ is a Picard curve over $\C$, then $\bar{\kappa}(C)=0$. 
\end{corollary}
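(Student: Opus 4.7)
The plan is to apply Proposition \ref{prop: preparation griffiths vanishing} with the subgroup $G = \mu_3 \subset \Aut(C)$ rather than with the full automorphism group, since $\mu_3$ acts on every Picard curve and already suffices to kill the relevant invariants. All the hard work has been done in the preceding subsections; this should be a short assembly.

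First, I would verify the cohomological hypothesis: by Lemma \ref{lemma: mu3-invariants type (1,2)+(2,1)}, we have $\HH^0(J,\Omega_J^3)^{\mu_3}=0$ for any Picard curve. This is exactly the input Proposition \ref{prop: preparation griffiths vanishing} requires of $G$, and it guarantees the existence of an abelian variety $A$ with $\HH^3(J;\Q)^{\mu_3} \simeq \HH^1(A)(-1)$ as Hodge structures. Proposition \ref{proposition: wedge3 iso to ell curve as hodge structures} has in fact identified such an $A$ explicitly, namely the CM elliptic curve $E\colon y^2=x^3+1$.

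Second, I would supply the Hodge-theoretic input needed to promote this isomorphism of Hodge structures to an isomorphism of Chow motives. Proposition \ref{prop: preparation griffiths vanishing} requires the Hodge conjecture for $J\times A$, and in our situation this is exactly Schoen's Theorem \ref{thm: schoen} applied to $J\times E$ (whose endomorphism algebra contains $K=\Q(\omega)$ in signature $(2,2)$, so the relevant Weil classes are algebraic). Corollary \ref{cor: isom of motives} records the resulting isomorphism $\frak{h}^3(J)^{\mu_3}\simeq \frak{h}^1(E)(-1)$.

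With these two ingredients in place, the final sentence of Proposition \ref{prop: preparation griffiths vanishing} gives $\bar{\kappa}(C)=0$ directly: the class $[C]_1$ sits inside $\CH^{g-1}_{(1)}(J)^{\mu_3}\subset \CH^{g-1}(\frak{h}^{2g-3}(J)^{\mu_3})$, which via the K\"unnemann Lefschetz isomorphism and Corollary \ref{cor: isom of motives} is identified with a Chow group of $\frak{h}^1(E)(-g+2)$, on which (by Lemma \ref{lemma: Griffiths group vanishes for h^1}) algebraic and rational equivalence agree. Hence $[C]_1$ is algebraically trivial, and Proposition \ref{proposition: vanishing ceresa equivalent to vanishing C1} yields $\bar{\kappa}(C)=0$. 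No new obstacle arises: the only substantive step is the appeal to Schoen, and the rest is bookkeeping that the ambient proposition is stated for a subgroup $G\subset \Aut(C)$ rather than the full automorphism group, which it is.
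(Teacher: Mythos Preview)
Your proposal is correct and follows essentially the same approach as the paper: apply Proposition \ref{prop: preparation griffiths vanishing} with $G=\mu_3$, using Proposition \ref{proposition: wedge3 iso to ell curve as hodge structures} to identify $A=E$ and Theorem \ref{thm: schoen} to supply the needed case of the Hodge conjecture. The extra unpacking you give in the final paragraph just recapitulates the proof of Proposition \ref{prop: preparation griffiths vanishing} itself and is not needed.
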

\begin{proof}
    This follows from Proposition \ref{prop: preparation griffiths vanishing}, using Proposition \ref{proposition: wedge3 iso to ell curve as hodge structures} and Theorem \ref{thm: schoen}.
\end{proof}

\begin{proof}[Proof of Theorem \ref{thrm main body: picard curve vanish griffiths group}]
    Consider the parameter space $S_0 = \{(a,b,c) \mid \disc(f)\neq 0\} \subset \A^3_{\Q}$ of Picard curves.
    Let $\eta$ be the generic point of $S_0$ with function field $k(\eta)= \Q(a,b,c)$, let $C_{\eta}$ be the generic Picard curve over $k(\eta)$ with Jacobian $J_{\eta}$, and let $\kappa_{C_{\eta}}\in \CH_1(J_{\eta})$ be the Ceresa cycle of $C_{\eta}$ based at the point at infinity.
    Fix an embedding $j\colon k(\eta) \rightarrow \C$.
    By Corollary \ref{corollary: ceresa vanishes griffiths group picard curve}, the base change of $\kappa_{C{\eta}}$ along $j$ is torsion in the Griffiths group.
    By Lemma \ref{lemma: base extension chow groups}, this implies $\kappa_{C_{\eta}}$ is itself torsion in $\Gr_1(J_{\eta})$.
    Let $N_0$ be the finite order of $\kappa_{C_{\eta}}$ in $\Gr_1(J_{\eta})$.
    By spreading out, it follows that $\kappa_{C_f}$ is $N_0$-torsion in $\Gr_1(J_f)$ for all $f$ in an open dense subset $U_0\subset S_0$.
    By applying the same argument to the generic points of the irreducible components of $S_0\setminus U_0$, there exists an open dense $U_1\subset S_0$ containing $U_0$ and those generic points such that $\kappa_{C_f}$ is $N_1$-torsion in $\Gr_1(J_f)$ for all $f$ in $U_1$.
    Repeating this process, we obtain a sequence of open subsets $U_0\subset U_1\subset \cdots \subset S_0$ whose complements have strictly decreasing codimension. 
    Therefore this sequence must terminate after finitely many steps hence there exists an integer $N$ such that $\kappa_{C_f}$ is $N$-torsion in $\Gr_1(J_f)$ for all $f\in S_0(k)$ over every algebraically closed field of characteristic zero.
\end{proof}

The remainder of the section is devoted to proving Theorem \ref{thm: picard vanishing chow group}. 
To this end, we will analyze the Abel--Jacobi image of $\kappa_f$ (the ``normal function'' associated to $\kappa_f$) in the next two subsections.

\subsection{The Abel--Jacobi map}\label{subsec: abel-jacobi map}
If $X$ is a smooth variety over $\C$, we will use the notion of an integral (respectively rational) variation of (pure) Hodge structures over the complex manifold $X(\C)$, called a $\Z$-VHS (respectively $\Q$-VHS) for short; see \cite[\S5.3.1]{voisin-hodgeii} for definitions.
If $V$ is a Hodge structure of odd weight $2k-1$, its intermediate Jacobian $\mathrm{J}(V)$ is the complex torus
\begin{align*}
    (V\otimes_{\Z} \C)/(F^k + V_{\tau}),
\end{align*}
where $F^k\subset V\otimes_{\Z} \C$ is a part of the descending Hodge filtration and $V_{\tau}$ denotes the quotient of $V$ by its torsion subgroup.
More generally, if $\mathbb{V}$ is a $\Z$-VHS of weight $2k-1$ over $X(\C)$, we can define its intermediate Jacobian $\mathrm{J}(\mathbb{V})\rightarrow X(\C)$, a relative complex torus whose fibers over points $x\in X(\C)$ are the classical intermediate Jacobians $\mathrm{J}(\mathbb{V}_x)$, see \cite[\S7.1.1]{voisin-hodgeii}.
If $\mathbb{V}, \mathbb{W}$ are two $\Z$-VHS of odd weight, then $\mathrm{J}(\mathbb{V}(p)) = \mathrm{J}(\mathbb{V})$ for all $p \in \Z$ and a morphism $\mathbb{V} \rightarrow \mathbb{W}$ of $\Z$-VHS induces a homomorphism of (relative) complex tori $\mathrm{J}(\mathbb{V}) \rightarrow \mathrm{J}(\mathbb{W})$.

If $X/\C$ is a nice variety and $0\leq p\leq \dim(X)$, we write $\mathrm{J}^p(X) = \mathrm{J}(\HH^{2p-1}(X(\C);\Z))$.
In this situation there is an Abel--Jacobi map 
\begin{align}
    \mathrm{AJ}^p_{X}\colon \CH^p(X)_{\hom} \rightarrow \mathrm{J}^p(X),
\end{align}
defined in \cite[\S7.2.1]{voisin-hodgeii}.
Moreover, if $S/\C$ is a smooth variety, $\pi\colon X\rightarrow S$ a smooth projective morphism with geometrically integral fibres and $p \in \Z_{\geq 0}$, then $R^p \pi_*\Z$ (pushforward of the constant sheaf in the analytic topology) has the structure of a $\Z$-VHS over $S(\C)$.
If $Z$ is a codimension $p$ cycle on $X$ all of whose components are flat over $S$, and such that $Z_s \in \CH^p(X_s)_{\hom}$ for every $s\in S(\C)$, then Griffiths has shown that there exists a holomorphic section $\AJ(Z)$ of the relative complex torus $\mathrm{J}(R^p\pi_* \Z) \rightarrow S(\C)$ with the property that $\AJ(Z)_s = \AJ_X^p(Z_s)$ for all $s\in S(\C)$; this is called the normal function associated to $Z$.

We record the fact that Abel--Jacobi maps are compatible with correspondences.
Let $X,Y$ be nice varieties over $\C$ and let $\gamma \in\CH^{r+\dim(X)}(X\times Y)$ be a correspondence of degree $r$.
This induces for every $p\geq 0$ a homomorphism $\gamma_* \colon \CH^p(X) \rightarrow \CH^{p+r}(Y)$ via the formula $\alpha \mapsto \pi_{Y,*}(\pi_{X}^*(\alpha) \cdot \gamma)$, where $\pi_X\colon X\times Y\rightarrow X$ and $\pi_Y\colon X\times Y\rightarrow Y$ denote the projections.
The same formula defines morphism of Hodge structures $\HH^p(X;\Z) \rightarrow \HH^{p+2r}(Y)(r)$ for every $p$, hence a homomorphism of complex tori $\gamma_*\colon \mathrm{J}^p(X) \rightarrow \mathrm{J}^{p+r}(Y)$. 

\begin{lemma}\label{lemma: abel-jacobi compatible with correspondences}
    In the above notation, $\gamma_*$ sends $\CH^p(X)_{\hom}$ to $\CH^{p+r}(Y)_{\hom}$.
    Moreover for every $\alpha \in \CH^p(X)_{\hom}$, $\gamma_*(\AJ_X^p(\alpha)) = \AJ_Y^{p+r}(\gamma_*(\alpha))$.
\end{lemma}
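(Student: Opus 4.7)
The plan is to handle the two assertions in sequence. For the first, that $\gamma_*$ preserves homological triviality, I would invoke the compatibility of the cycle class map $\CH^{\bullet}(-) \to \HH^{2\bullet}(-;\Q)$ with flat pullback, intersection, and proper pushforward. Together these imply that the cycle class map intertwines the Chow-theoretic correspondence action with the cohomological one defined by the same formula $\alpha \mapsto \pi_{Y,*}(\pi_X^*\alpha \cdot \gamma)$. In particular, if the class of $\alpha$ vanishes in cohomology, so does the class of $\gamma_*\alpha$, giving the first assertion.

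For the Abel--Jacobi compatibility, I would decompose the correspondence action as $\gamma_* = \pi_{Y,*} \circ (\cdot\, \gamma) \circ \pi_X^*$ and verify compatibility of $\AJ$ with each of the three constituent operations separately. Compatibility with flat pullback and proper pushforward follows readily from the chain-integration description of $\AJ$, as discussed in \cite[\S7.2]{voisin-hodgeii}. Compatibility with intersection by a fixed cycle is the subtlest of the three; a clean route is to lift $\AJ$ to the cycle class map into Deligne cohomology, which is compatible with cup products by construction, and then project to the intermediate Jacobian.

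The main work here is pure bookkeeping; the lemma is classical, and one could equivalently just quote the fact that $\AJ$ is a natural transformation of covariant functors on nice complex varieties with morphisms given by correspondences of the appropriate degree. I do not expect any serious obstacle beyond assembling the standard compatibilities.
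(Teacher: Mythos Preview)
Your proposal is correct and follows essentially the same approach as the paper: both treat the first assertion via compatibility of the cycle class map with the three constituent operations, and both handle the second assertion by decomposing $\gamma_*$ into pullback, intersection, and pushforward and checking $\AJ$-compatibility for each. The only differences are in the bookkeeping for the individual pieces---the paper deduces pushforward from pullback via Poincar\'e duality (rather than directly from the chain-integration description) and cites \cite[Proposition~9.23]{voisin-hodgeii} for intersection, which amounts to the Deligne-cohomology argument you sketch.
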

\begin{proof}
    The first sentence follows from the compatibility of the cycle class map with correspondences \cite[Proposition 9.21]{voisin-hodgeii}.
    To prove the compatibility of the Abel--Jacobi map with correspondences, it suffices to prove the compatibility with pullbacks, pushforwards and intersection product. The case of pullback is elementary, using the definition of $\AJ^p_X$ in terms of extensions of Hodge structures, see \cite[\S2.2]{Charles-zerolocusnormalfunctions}.
    The case of pushforward follows from that of pullback and Poincare duality. 
    Finally, compatibility with intersection product follows from \cite[Proposition 9.23]{voisin-hodgeii}.
\end{proof}

Let $S = \{(a,b,c) \mid \disc(f) \neq 0\}\subset \A^3_{\C}$ be the parameter space of Picard curves over $\C$.
We will identify $\C$-valued points of $S$ with polynomials $f = x^4+ax^2+bx+c \in \C[x]$ of nonzero discriminant.
Let $\mathcal{C}\rightarrow S$ be the universal Picard curve, and let $\pi\colon \mathcal{J}\rightarrow S$ its relative Jacobian variety.
The point at infinity defines a section $P_{\infty}$ of $\mathcal{C}$.
Let $\kappa_{\mathcal{C}} \in \CH_1(\mathcal{J})$ be the universal Ceresa cycle with respect to this section.
(Comparing with our earlier notation, we have $\mathcal{C}_f = C_f$ and $\kappa_{\mathcal{C},f} = \kappa_f$ for every $f \in S(\C)$.)

Let $\mathbb{V} = R^3 \pi_* \Z$ be the $\Z$-VHS on $S(\C)$ interpolating the cohomology groups $\HH^3(\mathcal{J}_f;\Z)$ for $f\in S(\C)$.
Then the normal function $\AJ(\kappa_{\mathcal{C}})$ is a section of $\mathrm{J}(\mathbb{V}) \rightarrow S(\C)$ interpolating $\AJ_{J_f}^2(\kappa_f)$.

\begin{proposition}\label{proposition: ceresa torsion iff AJ image torsion}
    For $f\in S(\C)$, the class $\kappa_f\in \CH^2(J_f)_{\mathrm{hom}}$ is torsion if and only if $\AJ_{J_f}^2(\kappa_f)\in \mathrm{J}^2(J_f)$ is torsion. 
    The torsion order of $\kappa_f$, if finite, equals the torsion order of $\AJ_{J_f}^2(\kappa_f)$.
\end{proposition}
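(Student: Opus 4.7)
The forward direction and the divisibility $\ord(\AJ^2_{J_f}(\kappa_f)) \mid \ord(\kappa_f)$ are immediate, since $\AJ^2_{J_f}$ is a group homomorphism. For the converse and the exact equality of orders, the plan is to pass through the isomorphism of rational Chow motives $\Phi \colon \frak{h}^3(J_f)^{\mu_3} \xrightarrow{\sim} \frak{h}^1(E)(-1)$ from Corollary~\ref{cor: isom of motives}, and to exploit the fact that for the elliptic curve $E$ the Abel--Jacobi map $\AJ^1_E \colon \CH^1(E)_{\mathrm{hom}} \to \mathrm{J}^1(E)$ is an honest isomorphism of abelian groups under the canonical identification $\mathrm{Pic}^0(E) = E(\C) = \mathrm{J}^1(E)$.

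The first step is to verify that $\kappa(C_f)$ lies in the summand $\CH^2(\frak{h}^3(J_f)^{\mu_3})$ of $\CH^2(J_f)_\Q$. Since $g = 3$, formula~\eqref{equation: ceresa cycle beauville components} reads $\kappa(C_f) = 2[C_f]_1$, and Proposition~\ref{proposition: C1 lies in specific submotive} places this class in $\CH^2(\frak{h}^3_{\mathrm{prim}}(J_f)^{\mu_3})$. Moreover the $\mu_3$-action on $\HH^1(J_f;\Q)$ has only nontrivial eigenvalues (cf.\ the proof of Lemma~\ref{lem:wedge = invariants}), so $\frak{h}^1(J_f)^{\mu_3} = 0$, and the Lefschetz decomposition of \S\ref{subsec: lefschetz decomposition abelian varieties} collapses to $\frak{h}^3(J_f)^{\mu_3} = \frak{h}^3_{\mathrm{prim}}(J_f)^{\mu_3}$.

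Representing $\Phi$ by a correspondence in $\CH^2(J_f \times E)_\Q$ and invoking Lemma~\ref{lemma: abel-jacobi compatible with correspondences} produces a commutative square whose horizontal arrows are the Abel--Jacobi maps $\AJ^2_{J_f}$ and $\AJ^1_E$ and whose vertical arrows are $\Phi_*$, the latter restricting to $\Q$-isomorphisms on the $\mu_3$-invariant Beauville-$(1)$ summands; crucially, the bottom arrow $\AJ^1_E$ is already an isomorphism on $\Z$-coefficients. Together with Proposition~\ref{proposition: vanishing ceresa equivalent to vanishing C1}, a diagram chase yields the ``if and only if'' statement: $\kappa_f$ is torsion iff $\kappa(C_f) = 0$ in $\CH^2(J_f)_\Q$ iff $\Phi_*(\kappa(C_f))$ vanishes in $\CH^1(E)\otimes\Q$ iff $\Phi_*(\AJ(\kappa_f))$ vanishes in $\mathrm{J}^1(E)\otimes\Q$ iff $\AJ(\kappa_f)$ is torsion.

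The most delicate point will be upgrading this to \emph{exact} equality of orders, since $\Phi$ is only a rational isomorphism. The plan is to show that $\AJ^2_{J_f}$ is injective on the cyclic subgroup $\langle \kappa_f \rangle \subset \CH^2(J_f)$. Given $n\AJ(\kappa_f) = 0$, applying $\Phi_*$ and using the $\Z$-injectivity of $\AJ^1_E$ already forces $\Phi_*(n\kappa_f) = 0$ in $\CH^1(E)_{\mathrm{hom}}$. A reverse correspondence $\Phi'$ satisfying $\Phi' \circ \Phi = d \cdot \mathrm{id}$ rationally, combined with Theorem~\ref{thrm main body: picard curve vanish griffiths group} (which places a uniform multiple of $\kappa_f$ in $\CH^2(J_f)_{\mathrm{alg}}$, where Abel--Jacobi has improved injectivity properties), will then be used to lift the vanishing from $\Phi_*(n\kappa_f)$ back to $n\kappa_f$ itself. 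The main obstacle is controlling the integer denominators introduced by $\Phi$ and $\Phi'$ so that they do not inflate the apparent order of $\kappa_f$; this is where the rigidity of algebraically trivial cycles is essential.
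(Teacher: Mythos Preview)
Your approach to the ``if and only if'' statement is essentially the paper's: build the commutative square from the correspondence realizing $\frak{h}^3(J_f)^{\mu_3}\simeq\frak{h}^1(E)(-1)$ (Corollary~\ref{cor: isom of motives}) together with Lemma~\ref{lemma: abel-jacobi compatible with correspondences}, and deduce that the bottom Abel--Jacobi map is a $\Q$-isomorphism because the other three arrows are. (The paper uses only that $\kappa_f\in\CH^2(J_f)^{\mu_3}_{\hom}$ and does not pass through $\frak{h}^3_{\mathrm{prim}}$, but your extra verification is harmless.)

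For the equality of orders, however, your plan hits exactly the obstacle you name and does not overcome it. The relation $\Phi'\circ\Phi=d\cdot\mathrm{id}$ holds only in the category of \emph{rational} Chow motives; acting on $n\kappa_f$, it pins down $\Phi'_*\Phi_*(n\kappa_f)$ only up to torsion in $\CH^2(J_f)$, which is vacuous once $\kappa_f$ is already torsion. Neither Theorem~\ref{thrm main body: picard curve vanish griffiths group} nor general facts about algebraically trivial cycles furnish the integral control needed to recover $n\kappa_f=0$ from $\Phi_*(n\kappa_f)=0$. The paper instead settles this step in one line by citing Murre's theorem \cite[Theorem~10.3]{Murre-applicationsktheorycycles} that $\AJ^2_X$ is injective on the torsion subgroup of $\CH^2(X)_{\hom}$: having already shown $\kappa_f$ is torsion iff $\AJ(\kappa_f)$ is, Murre's injectivity immediately forces the orders to coincide. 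No further use of $\Phi$, $\Phi'$, or Theorem~\ref{thrm main body: picard curve vanish griffiths group} is required.
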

\begin{proof}
Let $E$ be the elliptic curve $y^2 = x^3+1$.
Corollary \ref{cor: isom of motives} shows that there exists a correspondence $\gamma \in \CH^2(E\times J)$ such that $\gamma_*$ induces isomorphisms $\CH^1(E)_{\hom, \Q} \rightarrow \CH^2_{(1)}(J)^{\mu_3}$ and $\HH^1(E;\Q) \rightarrow \HH^3(J;\Q)^{\mu_3}(1)$.
Let $\varphi$ be the restriction of $\AJ_{J_f}^2\otimes \Q$ to $\CH_{(1)}^2(J_f)^{\mu_3}$.
By Lemma \ref{lemma: abel-jacobi compatible with correspondences} these form a commutative diagram:
\[\begin{tikzcd}
	{\CH^1(E)_{\hom,\Q}} & {\mathrm{J}^1(E)\otimes \Q} \\
	{\CH_{(1)}^2(J_f)^{\mu_3}} & {\mathrm{J}^2(J_f)^{\mu_3}\otimes \Q}
	\arrow["\AJ", from=1-1, to=1-2]
	\arrow["{\gamma_*}", from=1-1, to=2-1]
	\arrow["{\gamma_*}", from=1-2, to=2-2]
	\arrow["\varphi", from=2-1, to=2-2]
\end{tikzcd}\]
All arrows except $\varphi$ are isomorphisms of abelian groups.
Therefore $\varphi$ is an isomorphism too. 
Since the image of $\kappa_f$ in $\CH^2(J_f)_{\Q}$ lies in $\CH^2_{(1)}(J_f)^{\mu_3}$ by \eqref{equation: ceresa cycle beauville components}, we conclude that $\kappa_f$ is torsion if and only if $\AJ_{J_f}^2(\kappa_f)$ is.
The claim about torsion orders follows from the fact that $\AJ^2_{J_f}$ is injective on torsion subgroups by a result of Murre \cite[Theorem 10.3]{Murre-applicationsktheorycycles}.
\end{proof}

\subsection{Identifying the complex torus $\mathrm{J}(\mathbb{V}^{\mu_3})$
}

The $\mu_3$-action on $\mathcal{C}$ induces, via functoriality, a $\mu_3$-action on $\mathcal{J}$, $\mathbb{V}$ and $\mathrm{J}(\mathbb{V})$. 
The subsheaf of fixed points $\mathbb{V}^{\mu_3}$ has the structure of a $\Z$-VHS.
The connected component of the identity $\mathrm{J}(\mathbb{V})^{\mu_3,\circ}$ of $\mathrm{J}(\mathbb{V})^{\mu_3}$ is a relative complex torus over $S(\C)$. 
Moreover, the natural homomorphism $\mathrm{J}(\mathbb{V}^{\mu_3}) \rightarrow \mathrm{J}(\mathbb{V})$ induces an isomorphism onto $\mathrm{J}(\mathbb{V})^{\mu_3,\circ}$.
\begin{lemma}
    The multiple $3 \cdot \mathrm{AJ}(\kappa_{\mathcal{C}})$ lands in $\mathrm{J}(\mathbb{V})^{\mu_3,\circ}$.
\end{lemma}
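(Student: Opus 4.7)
The plan is to combine three observations: (i) the Ceresa cycle $\kappa_{\mathcal{C}}$ is $\mu_3$-invariant, (ii) the Abel--Jacobi map is equivariant for the $\mu_3$-action, and (iii) a general fact that multiplication by $|G|$ sends the fixed locus of a group action on a complex torus into the identity component of that fixed locus.

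First I would verify $\mu_3$-invariance of $\kappa_f$ for every $f \in S(\mathbb{C})$. The automorphisms $(x,y)\mapsto(x,\omega y)$ fix the point at infinity $P_{\infty}$, so they commute with the Abel--Jacobi embedding $\iota_{P_{\infty}}\colon \mathcal{C}_f \hookrightarrow \mathcal{J}_f$, hence preserve the cycle $[\iota_{P_{\infty}}(\mathcal{C}_f)] \in \mathrm{CH}_1(\mathcal{J}_f)$. Since the $\mu_3$-action commutes with $(-1)$ on $\mathcal{J}_f$, this implies $\kappa_f$ is $\mu_3$-invariant. Applying Lemma \ref{lemma: abel-jacobi compatible with correspondences} to the correspondences $\Gamma_g \subset J_f \times J_f$ given by the graphs of $g \in \mu_3$, we conclude that $\mathrm{AJ}_{J_f}^2(\kappa_f) \in \mathrm{J}^2(J_f)^{\mu_3}$ for every $f$, so that $\mathrm{AJ}(\kappa_{\mathcal{C}})$ is a section of the relative fixed locus $\mathrm{J}(\mathbb{V})^{\mu_3} \to S(\mathbb{C})$.

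The remaining task is to show that $3 \cdot \mathrm{J}(\mathbb{V})^{\mu_3} \subset \mathrm{J}(\mathbb{V})^{\mu_3,\circ}$. This can be checked fiber by fiber, so it reduces to the following elementary statement: if a finite group $G$ acts on a complex torus $T = V/\Lambda$, then $|G| \cdot T^G \subset T^{G,\circ}$. To see this, let $t = v + \Lambda \in T^G$, so that $gv - v \in \Lambda$ for every $g \in G$. Setting $w := \sum_{g \in G} gv$, we have $w \in V^G$ and $|G| \cdot v - w = -\sum_{g}(gv - v) \in \Lambda$. Hence $|G| \cdot t = w + \Lambda$ lies in the image of $V^G \to T$, which is exactly the identity component $T^{G,\circ} = V^G/\Lambda^G$. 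Applying this with $G = \mu_3$ to each fiber of $\mathrm{J}(\mathbb{V}) \to S(\mathbb{C})$ yields the lemma.

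There is no real obstacle: both steps are routine once the compatibility of Abel--Jacobi with the $\mu_3$-action has been set up via Lemma \ref{lemma: abel-jacobi compatible with correspondences}, and the component-group bound is a standard calculation on lattices. The only minor point to keep in mind is that the relative identity component $\mathrm{J}(\mathbb{V})^{\mu_3,\circ}$ can be identified with $\mathrm{J}(\mathbb{V}^{\mu_3})$, so that the resulting multiple of the normal function genuinely lifts to a section of the intermediate Jacobian of the sub-VHS $\mathbb{V}^{\mu_3}$, which is what will be needed for the subsequent analysis.
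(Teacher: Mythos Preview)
Your proof is correct and follows essentially the same approach as the paper: both arguments use $\mu_3$-invariance of $\kappa_{\mathcal{C}}$ to land $\mathrm{AJ}(\kappa_{\mathcal{C}})$ in the fixed locus, and then apply the averaging (norm) map $x \mapsto \sum_{g\in\mu_3} g\cdot x$, which equals multiplication by $3$ on fixed points and whose image lies in the identity component. The only cosmetic difference is that the paper justifies this last inclusion by the topological observation that the norm map has connected image, whereas you spell it out via the lattice presentation $T = V/\Lambda$.
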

\begin{proof}
    Since the Ceresa cycle $\kappa_{\mathcal{C}}$ is $\mu_3$-invariant, $\mathrm{AJ}(\kappa_{\mathcal{C}})$ lands in $\mathrm{J}(\mathbb{V})^{\mu_3}$. The norm map $\mathrm{N}\colon \mathrm{J}(\mathbb{V}) \rightarrow \mathrm{J}(\mathbb{V})$, defined by $x\mapsto x+ \omega \cdot x + \omega^2 \cdot x$, lands in $\mathrm{J}(\mathbb{V})^{\mu_3,\circ}$, since the image must be connected and $\mu_3$-invariant.
    We conclude that $\mathrm{N}(\mathrm{AJ}(\kappa_{\mathcal{C}})) = 3\cdot \mathrm{AJ}(\kappa_{\mathcal{C}})$ lands in $\mathrm{J}(\mathbb{V})^{\mu_3,\circ}$.
\end{proof}
Therefore $3\cdot \mathrm{AJ}(\kappa_{\mathcal{C}})$ defines a section of $\mathrm{J}(\mathbb{V})^{\mu_3,\circ}$, hence we may view it as a section of the complex torus $\mathrm{J}(\mathbb{V}^{\mu_3}) \rightarrow S(\C)$ in what follows.
We study this relative complex torus (up to isogeny) in the next two propositions.

\begin{proposition}\label{proposition: identification complex tori}
    Let $\mathcal{E} \rightarrow S$ be the relative elliptic curve with Weierstrass equation \[\mathcal{E}\colon y^2 = 4x^3 - 27 \cdot \disc(f).\]
    Then the relative complex torus $\mathrm{J}(\mathbb{V}^{\mu_3})\rightarrow S$ is isogenous to the relative complex torus $\mathcal{E}(\C) \rightarrow S(\C)$.
\end{proposition}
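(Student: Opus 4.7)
The plan is to show that both $\mathrm{J}(\mathbb{V}^{\mu_3})\to S(\C)$ and $\mathcal{E}(\C)\to S(\C)$ are isotrivial families of elliptic curves with complex multiplication by $\calO$ and $j$-invariant zero, and then to match them up to isogeny via their classifying monodromy characters on $\pi_1(S(\C))$.

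The first step is to verify the structural claim fiberwise. For $\mathcal{E}$ this is immediate from the Weierstrass equation $y^2 = 4x^3 - 27\disc(f)$: the coefficient of $x$ vanishes identically, so every fiber has $j = 0$ and admits a natural $\calO$-action. For $\mathrm{J}(\mathbb{V}^{\mu_3})$, Lemma \ref{lem:wedge = invariants} identifies the fiber of $\mathbb{V}^{\mu_3}$ over $f$ with $\bigwedge_K^3 \HH^1(J_f;\Q)$, a rank-$1$ $K$-module, and Lemma \ref{lemma: mu3-invariants type (1,2)+(2,1)} shows it is of Hodge type $(1,2)+(2,1)$. After Tate twist this is a polarized weight-$1$ Hodge structure with CM by $\calO$; since the class number of $K$ is one, every such structure arises from the fixed elliptic curve $E_0\colon y^2 = x^3+1$ (which has $j=0$), so all fibers are abstractly isomorphic and the family is isotrivial.

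The second step is to match the two isotrivial families up to isogeny. Any isotrivial family of elliptic curves with $j = 0$ over $S(\C)$ is a sextic twist of the constant family $E_0\times S(\C)$, and is classified by a monodromy character $\pi_1(S(\C))\to \Aut(E_0)=\mu_6$; two such families are isogenous precisely when their characters agree modulo the equivalence induced by semilinear automorphisms of $E_0$. For $\mathcal{E}$ the classifying character is the sextic residue class of $\disc(f)$ along loops in $S(\C)$. For $\mathrm{J}(\mathbb{V}^{\mu_3})$ it is the $K$-determinant of the monodromy of $\HH^1(\mathcal{J};\Z)$ regarded as a rank-$3$ $\calO$-module.

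The main obstacle is to check that these two characters agree up to the required ambiguity. A direct approach is to compute the monodromy of $\mathbb{V}^{\mu_3}$ via period integrals, using the classical basis $dx/y,\, x\,dx/y,\, dx/y^2$ of holomorphic differentials on $C_f$ to write down $\HH^{2,1}(J_f)^{\mu_3}$ explicitly and extract the determinant character. A cleaner alternative is to restrict to a subfamily where the isogeny can be read off from earlier work, such as the bielliptic locus $\{b=0\}\subset S$ treated in \cite{LagaShnidman-CeresaBiellipticPicard}, and use rigidity of characters of $\pi_1(S(\C))$ (exploiting surjectivity of the fundamental group of a suitable subvariety onto the image of the character) to bootstrap the identification to all of $S$. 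In either case the outcome is that the classifying character of $\mathrm{J}(\mathbb{V}^{\mu_3})$ matches the sextic residue class of $\disc(f)$ up to the allowed ambiguity, yielding the desired isogeny.
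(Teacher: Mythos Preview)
Your plan is correct and matches the paper's proof closely: both arguments establish fiberwise isotriviality with $j=0$, reduce to classifying the family as a $\mu_6$-twist of the constant family, and pin down the twist by restricting to the bielliptic locus $T=\{b=0\}$ and invoking \cite{LagaShnidman-CeresaBiellipticPicard}.

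Two places where the paper is sharper than your sketch. First, rather than speaking of monodromy characters abstractly, the paper computes $\HH^1_{\mathrm{et}}(S,\mu_6)$ via the Kummer sequence (using $\Pic(S)=0$ and $\G_m(S)=\C^{\times}\cdot\disc^{\Z}$), so the only candidates are the six explicit families $\mathcal{E}_i\colon y^2=x^3+\disc(f)^i$. Second, your ``rigidity'' step is not quite right as phrased: surjectivity of $\pi_1(T)\to\pi_1(S)$ (or onto the image of the character) is neither what you need nor obviously true. What is actually required is that the restriction map on $\mu_6$-torsors from $S$ to $T$ separates the six candidates, i.e.\ that the $\mathcal{E}_i|_T$ are pairwise non-isomorphic; the paper checks this directly via their monodromy. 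The paper also makes explicit that for isotrivial families with $\calO$-multiplication, isogeny over $S$ forces isomorphism (any isogeny factors as an endomorphism of one family followed by an isomorphism), which is needed to pass from ``isogenous over $T$'' to ``$i=1$''.
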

\begin{proof}
    Recall from \S\ref{subsec: multilinear algebra} that we write $\calO = \Z[\omega]$ and $K = \Q(\omega)$.
    The $\Z$-VHS $R^1 \pi_* \Z$ interpolating the cohomology groups $\HH^1(\mathcal{J}_f;\Z)$ comes equipped with an action of $\calO$, and cup product induces an isomorphism $\bigwedge^3 R^1 \pi_* \Z \simeq R^3 \pi_* \Z = \mathbb{V}$.
    Let $\mathbb{W} = \bigwedge^3_{\mathcal{O}} R^1\pi_* \Z$ (the third exterior product of $R^1\pi_*\Z$, viewed as a sheaf of $\calO$-modules), a $\Z$-VHS with an $\calO$-action.
    Lemma \ref{lem:wedge = invariants} shows that $\mathbb{W} \otimes \Q \simeq \mathbb{V}^{\mu_3} \otimes \Q$, so $\mathrm{J}(\mathbb{W})$ and $\mathrm{J}(\mathbb{V}^{\mu_3})$ are isogenous.

    To analyze $\mathrm{J}(\mathbb{W})$, we analyze the $\Z$-VHS $\mathbb{W}(1)$ more closely. 
    It has an $\calO$-action by construction, Lemma \ref{lem:wedge = invariants} shows that it has constant rank $2$, and Lemma \ref{lemma: mu3-invariants type (1,2)+(2,1)} shows that it has type $(1,0)+(0,1)$.
    Since $\calO$ has class number $1$, there exists a unique $\Z$-Hodge structure with these properties, hence $\mathbb{W}_f(1) \simeq \HH^1(E;\Z)$ for every $f\in S(\C)$, where $E$ is the elliptic curve with Weierstrass equation $y^2 = x^3+1$.

    Therefore $\mathrm{J}(\mathbb{W}) \rightarrow S(\C)$ is an isotrivial family of elliptic curves: analytically locally on $S(\C)$, it is isomorphic to $E(\C) \times S(\C) \rightarrow S(\C)$.
    It follows that the sheaf of local isomorphisms between $\mathrm{J}(\mathbb{W})$ and $E(\C) \times S(\C)$ is an $\Aut(E)$-torsor in the analytic topology on $S(\C)$.
    Since $\Aut(E) \simeq \mu_6$ is finite, this torsor is the analytification of an \'etale $\mu_6$-torsor on $S$.
    
    To analyze \'etale $\mu_6$-torsors on $S$, consider the following exact sequence induced by the Kummer exact sequence in \'etale cohomology:
    \begin{align*}
        \G_m(S)\xrightarrow{(-)^6} \G_m(S) \rightarrow \HH^1_{\mathrm{et}}(S, \mu_6) \rightarrow \Pic(S)[6]\rightarrow 0.
    \end{align*}
    The Picard group $\Pic(S)$ vanishes, being a quotient of $\Pic(\A^3_{\C})$, hence the outer term in the sequence vanishes.
    We claim that $\G_m(S)=\HH^0(S,\calO_S)^{\times} = \{ c\cdot \disc^n \mid c\in \C^{\times},\,  n  \in \Z\}$.
    Indeed, every $c\cdot \disc^n$ is clearly a unit in $\HH^0(S, \calO_S)$. 
    Conversely, given a unit $f\in \HH^0(S, \calO_S)$, seen as a rational function on $\A^3_{\C}$, its divisor $\mathrm{div}(f)$ of zeros and poles must be supported on the zero locus of $\disc$.
    Since $\disc\in \C[a,b,c]$ is irreducible, $\mathrm{div}(f) = n\cdot [\{\disc=0\}]$ for some $n\in \Z$.
    Then $\mathrm{div}(f/\disc^n) = 0$ as a rational function on $\A^3_{\C}$, hence $f/\disc^n$ is a unit in $\C[a,b,c]$, hence $f/\disc^n \in \C^{\times}$, proving the claim. 
    We conclude that the group $\HH^1_{\mathrm{et}}(S,\mu_6)$ classifying $\mu_6$-torsors is generated by the image of $\disc$.
    
    Let $\mathcal{E}_i\rightarrow S$ be the relative elliptic curve with equation $y^2 = x^3 + \disc(f)^i$.
    The previous paragraph shows that $\mathrm{J}(\mathbb{W})$ is isomorphic to $\mathcal{E}_i(\C)$ for some $i\in \{0,1,2,3,4,5\}$.
    We show that $i=1$, using our previous results on \emph{bielliptic} Picard curves in \cite{LagaShnidman-CeresaBiellipticPicard}.
    Let $T\subset S$ be the closed subscheme where $b=0$, parametrizing even quartic polynomials $f = x^4+ ax^2+c$.
    For $f\in T(\C)$, the $\mu_3$-action on $\mathcal{C}_f$ extends to a $\mu_6$-action. 
    A calculation shows $\disc|_T = 16 c (-a^2 + 4 c)^2$.
    Applying the singular cohomology realization functor to \cite[Theorem 5.1]{LagaShnidman-CeresaBiellipticPicard}, the relative complex torus $\mathrm{J}((\mathbb{V}|_{T(\C)})^{\mu_6})\rightarrow T(\C)$ is isogenous to $\mathcal{E}_1(\C)|_{T(\C)}\rightarrow T(\C)$.
    Since $\mathrm{J}((\mathbb{V}|_{T(\C)})^{\mu_6})$ is a subtorus of $\mathrm{J}((\mathbb{V}|_{T(\C)})^{\mu_3})$ of the same dimension, they must be equal, hence $\mathrm{J}(\mathbb{W})|_{T(\C)}$ is isogenous to $\mathcal{E}_1(\C)|_{T(\C)}$ over $T(\C)$.
    On the other hand, let $i\in \{0,1,2,3,4,5\}$ be such that $\mathrm{J}(\mathbb{W})\simeq \mathcal{E}_i(\C)$.
    Then $\mathrm{J}(\mathbb{W})|_{T(\C)}\simeq \mathcal{E}_i(\C)|_{T(\C)}$, hence $\mathcal{E}_1(\C)|_{T(\C)}$ is isogenous to $\mathcal{E}_i(\C)|_{T(\C)}$.
    
    We show that the latter can happen only if $i=1$.
    Indeed, let $\varphi\colon \mathcal{E}_1(\C)|_{T(\C)} \rightarrow \mathcal{E}_i(\C)|_{T(\C)}$ be an isogeny.
    Since the domain and target of $\varphi$ are isotrivial relative elliptic curves with $\calO$-multiplication, $\varphi$ factors as $\psi\circ \gamma$, where $\gamma$ is an endomorphism of $\mathcal{E}_1(\C)|_{T(\C)}$ and $\psi$ an isomorphism. 
    Therefore $\mathcal{E}_1(\C)|_{T(\C)}$ and $\mathcal{E}_i(\C)|_{T(\C)}$ are isomorphic. 
    Since the monodromy representations of $\mathcal{E}_1(\C)|_{T(\C)}$ and $\mathcal{E}_i(\C)|_{T(\C)}$ are non-isomorphic when $i\neq 1$, we conclude that $i=1$ and that $\mathrm{J}(\mathbb{W})$ is isogenous to $\mathcal{E}_1(\C)$ over $S(\C)$.
    
    In summary, we have shown that $\mathrm{J}(\mathbb{V}^{\mu_3})$, $\mathrm{J}(\mathbb{W})$ and $\mathcal{E}_1(\C)$ are isogenous. 
    Since $\mathcal{E}_1(\C)$ is isomorphic to $\mathcal{E}(\C)$, we conclude the proof.
\end{proof}

\begin{proposition}\label{proposition: group of sections of calE}
    The group of (algebraic) sections of $\mathcal{E} \rightarrow S$ is free of rank $1$ over $\calO = \Z[\omega]$, and contains the $\calO$-span of $P := (a^2+12c,72ac-2a^3-27b^2)$ as a finite index subgroup.
\end{proposition}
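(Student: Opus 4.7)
The strategy is to exhibit $P$ as an algebraic section, bound the $\calO$-rank of $\mathcal{E}(S)$ from above by $1$, and verify that $P$ has infinite order.

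First, that $P = (I(f), J(f))$ defines a section of $\mathcal{E}$ follows from the classical invariant-theoretic identity $J(f)^2 = 4I(f)^3 - 27\disc(f)$ for binary quartic forms (\S\ref{subsec: generalities}), which is exactly the Weierstrass equation of $\mathcal{E}$. The $\calO = \Z[\omega]$-module structure on $\mathcal{E}(S)$ arises from the fiberwise CM endomorphism $\omega \cdot (x, y) = (\omega x, y)$, extended to all of $\calO$.

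Second, I would bound $\mathrm{rank}_{\calO}\,\mathcal{E}(S) \leq 1$ by exploiting the isotrivialization used in Proposition~\ref{proposition: identification complex tori}: the étale $\mu_6$-cover $\pi\colon S' \to S$ defined by $t^6 = \disc$ trivializes $\mathcal{E}$ via $(x, y) = (t^2 u, t^3 v)$, giving $\mathcal{E} \times_S S' \cong E \times_\C S'$ with $E\colon v^2 = 4u^3 - 27$. By Galois descent,
\[ \mathcal{E}(S) \otimes \Q \cong (\mathrm{Mor}(S', E) \otimes \Q)^{\chi} \]
for the twisting character $\chi\colon \mu_6 \to \Aut(E)$. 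Since $E$ is projective, every morphism $S' \to E$ factors through the Albanese of any smooth projective compactification $\overline{S'}$ of $S'$, giving
\[ \mathrm{Mor}(S', E) = E(\C) \oplus \Hom(\mathrm{Alb}(\overline{S'}), E). \]
The constant summand contributes no nontrivial $\chi$-equivariant element (the $\chi(\mu_6)$-fixed points of $E(\C)$ are trivial), so $\mathcal{E}(S) \otimes \Q$ embeds into $\Hom(\mathrm{Alb}(\overline{S'}), E)^{\chi} \otimes \Q$. The hard part, and the main obstacle, is showing that this last group has $\calO$-rank at most $1$; the cleanest route is a direct Hodge-theoretic analysis of the $\chi$-isotypic component of $H^1(\overline{S'}, \Q)$, using that $\overline{S'}$ is a cyclic cover of $\P^3$ branched along (the compactification of) $\{\disc = 0\}$, together with its natural $\calO$-action.

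Finally, $P$ is non-torsion: restricting to a bielliptic fiber (with $b_0 = 0$) and combining \cite[Theorem 5.16]{LagaShnidman-CeresaBiellipticPicard} with the $3$-isogeny $\phi_f$ noted in the remark after Theorem~\ref{thm: picard vanishing chow group}, one sees that $P_{f_0}$ has infinite order in $\mathcal{E}_{f_0}(\C)$ for suitable $(a_0, c_0)$. Combined with the rank bound, this yields $\mathrm{rank}_{\calO}\,\mathcal{E}(S) = 1$ with $\calO \cdot P$ of finite index, and torsion-freeness of $\mathcal{E}(S)$ follows by observing that $N$-torsion coordinates on $\mathcal{E}_f$ cannot be regular functions on $S$ by the irreducibility of $\disc$ in $\C[a, b, c]$.
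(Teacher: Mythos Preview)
Your approach is genuinely different from the paper's, and the outline is plausible in principle, but the proposal has a real gap at its most important step.

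The paper does not pass to the trivializing $\mu_6$-cover at all. Instead it restricts $\mathcal{E}$ to a well-chosen one-parameter family $T=\{(at,bt,ct^2)\}\subset S$, obtaining an elliptic surface $y^2=x^3+t^4g(t)$ with $\deg g\le 2$. For generic $(a,b,c)$ this surface has a type $\mathrm{II}$ fiber (forcing the Mordell--Weil group to be torsion-free) and, by the Shioda--Tate formula, Mordell--Weil rank exactly $2$ over $\Z$, i.e.\ rank $1$ over $\calO$. Since $P$ restricts nontrivially to such a $T$, the $\calO$-span of $P$ already has finite index in $\mathcal{E}(T)$; a short argument then propagates both torsion-freeness and the rank bound back to $\mathcal{E}(S)$. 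This is concrete and requires no geometry beyond standard elliptic-surface technology.

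Your route via $\mathrm{Mor}(S',E)\cong E(\C)\oplus \Hom(\mathrm{Alb}(\overline{S'}),E)$ is correct as a framework (rational maps from smooth varieties to abelian varieties extend, so the Albanese governs everything), and your treatment of the constant summand and of torsion sections is fine once one notes that $S'$ is connected. But the crux---showing that the $\chi$-isotypic part of $\Hom(\mathrm{Alb}(\overline{S'}),E)$ has $\calO$-rank at most $1$---is exactly what you label ``the hard part, and the main obstacle,'' and you do not carry it out. This is not a detail one can wave away: $\overline{S'}$ is a threefold obtained as (a resolution of) a degree-$6$ cyclic cover of $\P^3$ branched along the closure of the degree-$5$ discriminant hypersurface, and computing the $\mu_6$-eigenspaces of $H^1(\overline{S'},\Q)$ requires genuine input about that branch locus. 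Without this computation, the rank bound is simply not established, and the proposition is not proved.

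If you want to salvage your approach, you would need an explicit determination of $H^{1,0}(\overline{S'})$ as a $\mu_6$-representation; alternatively, note that the paper's slicing trick sidesteps the threefold entirely and is much lighter.
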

\begin{proof}
    Given $s = (a,b,c) \in S(\C)$, consider the closed subscheme $T = \{ (at,bt,ct^2) \colon t\in \A^1_{\C}\} \cap S$ of $S$ and the restriction $\pi\colon \mathcal{E}|_T\rightarrow T$.
    The variety $\mathcal{E}|_T$ is an open subscheme of an elliptic surface with Weierstrass equation $y^2 = x^3+t^4g(t)$, where $g(t) \in \C[t]$ has degree $\leq 2$, using the formula \eqref{equation: disc picard curve}.
    There exists a dense open $U\subset S$ such that for all $s\in U(\C)$, $g(t)$ has two distinct nonzero roots.
    Let $s\in U(\C)$. Using Tate's algorithm, we see that the elliptic surface has a singular fiber above $t=0$ with Kodaira type $\IV^*$, singular fibers above the roots of $g(t)$ with Kodaira type $\II$, and is smooth above $t= \infty$.
    The presence of a fiber of type $\II$ implies that $\mathcal{E}|_T\rightarrow T$ has no torsion sections \cite[Lemma 7.8]{SchuttShioda-ellipticsurfaces}.
    Moreover the Shioda--Tate formula \cite[Theorem 6.3, Proposition 6.6 and \S8.8]{SchuttShioda-ellipticsurfaces} shows that the group of sections of $\mathcal{E}|_T\rightarrow T$ is free of rank $2$.
    Since $\mathcal{E}|_T$ receives an $\calO$-action, its group of sections is free of rank $1$ over $\calO$.
    We will now show that these facts can be used to prove the claims of the proposition by varying $s$ in $U(\C)$.

    We first show that $\mathcal{E}(S)$ is torsion-free. 
    Suppose $Q \in \mathcal{E}(S)$ is a torsion section. 
    Then $N\cdot Q =0$ for some $N\geq 1$.
    Hence $N\cdot Q|_{T}=0$ for every $s\in U(\C)$.
    Since the group of sections of $\mathcal{E}|_T\rightarrow T$ is torsion-free, $Q|_T=0$ for every $s\in U(\C)$.
    Hence $Q|_U = 0$.
    Since $U$ is dense in $S$, we must have $Q = 0$, as desired.

    The fact that $P$ defines a section of $\mathcal{E}\rightarrow S$ can be verified by direct computation (see \S\ref{subsec: generalities}).
    Let $L = \Z\langle P, \omega\cdot P\rangle $ be the $\calO$-span of $P$, which is a subgroup of the group of sections $\mathcal{E}(S)$ of $\mathcal{E}\rightarrow S$.
    The previous paragraph shows that $L$ is free of rank $1$ over $\calO$.
    It remains to show that it has finite index in $\mathcal{E}(S)$.
    Suppose for the sake of contradiction that there exists a third section $Q \in \mathcal{E}(S)$ which is not in $L\otimes \Q$.
    Then the locus $S_{\text{dep}}$ of $s\in S(\C)$ where $P_{s}, \omega\cdot P_{s}$ and $Q_s$ are linearly dependent is a countable union of closed proper algebraic subvarieties. 
    For every $s\in U(\C)$, the group of sections of $\mathcal{E}|_T\rightarrow T$ is free of rank $1$ over $\calO$.
    There exists a possibly smaller dense open $V\subset U$ such that if $s\in V(\C)$ then $P_{s}$ is nonzero, hence $\langle P_{s}, \omega \cdot P_s \rangle$ is a finite index subgroup of $\mathcal{E}(T)$.
    Therefore $Q_t, P_{1,t}, P_{2,t}$ are linearly dependent for all $s\in V(\C)$.
    Since $V(\C)\setminus S_{\text{dep}}$ is nonempty, we obtain a contradiction.
\end{proof}

\subsection{Proof of the vanishing criterion in the Chow group}\label{subsec: proof of the vanishing criterion picard chow group}

\begin{proof}[Proof of Theorem \ref{thm: picard vanishing chow group}]
    We may assume (using Lemma \ref{lemma: base extension chow groups}) that $k=\C$.
    Recall that $3\cdot \AJ(\kappa_{\mathcal{C}})$ defines a holomorphic section of $\mathrm{J}(\mathbb{V}^{\mu_3})\rightarrow S(\C)$.
    Choose an isogeny of complex tori $\mathrm{J}(\mathbb{V}^{\mu_3})\rightarrow \mathcal{E}(\C)$ using Proposition \ref{proposition: identification complex tori} and let $\sigma$ be the image of $3\cdot \AJ(\kappa_{\mathcal{C}})$ under this isogeny.
    This is a holomorphic section of $\mathcal{E}(\C) \rightarrow S(\C)$.

    We claim that $\sigma$ is not a torsion section. 
    If it were torsion, then $\AJ(\kappa_{\mathcal{C}})$ would be a torsion section of $\mathrm{J}(\mathbb{V}^{\mu_3})$, hence, by Proposition \ref{proposition: ceresa torsion iff AJ image torsion}, $\kappa_f$ would be torsion for every $f\in S(\C)$.
    This is not the case, since $\kappa_f$ is of infinite order if $f = x^4+x^2+1$ by \cite[Corollary 2.9]{LagaShnidman-CeresaBiellipticPicard}.
    We conclude that $\sigma$ is not a torsion section.

    Next we claim that $\sigma$ is the analytification of an algebraic section of $\mathcal{E}\rightarrow S$.
    Let $N\geq 1$ be an integer such that $N\cdot \kappa_f$ is algebraically trivial for every Picard curve $f\in S(\C)$ (such an integer exists by Theorem \ref{thrm main body: picard curve vanish griffiths group}).
    By the algebraicity of the Abel--Jacobi map for algebraically trivial cycles (\cite[Theorem 1]{achteretal-normalfunctionsalgebraic}), there exists a relative algebraic subtorus $\mathrm{J}_a(\mathbb{V})\subset \mathrm{J}(\mathbb{V})$ with the following property: the section $\mathrm{AJ}(3N\cdot  \kappa_{\mathcal{C}})$ lands in $\mathrm{J}_a(\mathbb{V})$ and the corresponding holomorphic map $S(\C) \rightarrow \mathrm{J}_a(\mathbb{V})$ is algebraic.
    On the other hand, $\AJ(3N\cdot \kappa_{\mathcal{C}})$ also lands in $\mathrm{J}(\mathbb{V}^{\mu_3})$, is not a torsion section by the previous paragraph, and $\mathrm{J}(\mathbb{V}^{\mu_3})$ has relative dimension $1$ over $S(\C)$.
    Therefore $\mathrm{J}(\mathbb{V}^{\mu_3})$ is the smallest relative subtorus of $\mathrm{J}(\mathbb{V})$ containing the image of $\AJ(3N \cdot \kappa_{\mathcal{C}})$.
    Hence $\mathrm{J}(\mathbb{V}^{\mu_3})\subset \mathrm{J}_a(\mathbb{V})$.
    We conclude that $\AJ(3N \cdot\kappa_{\mathcal{C}}) \colon S(\C) \rightarrow \mathrm{J}(\mathbb{V}^{\mu_3})$ is algebraic, so $\AJ(3\cdot \kappa_{\mathcal{C}})$ is algebraic, so $\sigma$ is algebraic.

    Since $\sigma$ is an algebraic section of $\mathcal{E}\rightarrow S$, Proposition \ref{proposition: group of sections of calE} shows that there exists an integer $M\geq 1$ and an element $\gamma \in \calO$ such that $M \cdot \sigma = \gamma \cdot P$.
    Since $\sigma$ is not a torsion section, $\gamma \neq 0$.
    
    Putting everything together, we have for $f\in S(\C)$: $\kappa_f$ is torsion if and only if $\AJ_{J_f}^2(\kappa_f)$ torsion (by Proposition \ref{proposition: ceresa torsion iff AJ image torsion}), if and only if $\AJ(\kappa_{\mathcal{C}})_f \in \mathrm{J}(\mathbb{V}^{\mu_3})_f$ torsion, if and only if $\sigma_f \in \mathcal{E}_f(\C)$ torsion, if and only if $P_f$ torsion.
    Tracing through the equivalences, the quotient of the torsion orders $\ord(\kappa_f)/\ord(P_f)$ (if defined) takes only finitely many values as $f$ ranges in $S(\C)$.
\end{proof}

\subsection{Analyzing the torsion locus of $P$}

Theorem \ref{thm: picard vanishing chow group} shows that there are infinitely many plane quartic curves over $\Q$ with torsion Ceresa cycle, since we may take $a = c = -12$. In fact, we can find explicit families of torsion Ceresa cycles over $\overline{\Q}$ of arbitrarily large order by explicitly computing the torsion locus of the section $P$ of $\mathcal{E}\rightarrow S$.

Recall that $S\subset \A^3_{\C}$ is the parameter space of polynomials $f(x) = x^4 + ax^2 + bx+c$ of nonzero discriminant.
For $(I,J) \in \mathbb{C}^2$ such that $4I^3- J^2 \neq 0$, let $X_{I,J}\subset S$ be the closed subscheme of elements $f$ satisfying $(I(f), J(f)) = (I,J)$, and let $E_{I,J}$ be the elliptic curve $y^2 = x^3- Ix /3 - J /27$.
Let $E_{I,J}^{\circ}\subset E_{I,J}$ be the complement of the origin.
\begin{lemma}\label{lemma: bijection polys fixed invariants and E_{I,J}}
For $(I,J)\in \C^2$ such that $4I^3 -J^2 \neq 0$, the map $f\mapsto (-2a/3,b)$ is an isomorphism $X_{I,J} \simeq E_{I,J}^{\circ}$.
\end{lemma}
\begin{proof}
Indeed, a calculation shows that an inverse is given by 
\begin{align}\label{equation: quartic ass to point on E}
    (\alpha, \beta)\mapsto f(x) = x^4 - 3\alpha x^2/2 + \beta x + (I/12 - 3\alpha^2/16).
\end{align}
\end{proof}


We now describe the locus of $f\in S$ where the specialization $P_f = (I(f), J(f))$ of the section $P$ of $\mathcal{E}\rightarrow S$ is torsion.
First we consider the restriction of this locus to the closed subscheme $S_1\subset S$ where $\disc(f) =1$, so that $\mathcal{E}\rightarrow S$ restricts to the trivial family with fiber the elliptic curve $E_0\colon y^2 = 4x^3-27$. 
Let $E_0^{\circ}\subset E_0$ be the complement of the origin.
If $(I,J) \in E_0^{\circ}(\C)$, then $X_{I,J}$ is a closed subscheme of $S_1$. 
For $G$ a group, write $G_{tors}\subset G$ for the subset of torsion elements.
Then we tautologically have
\begin{align*}
\{f\in S_1(k) \colon P_f \in E_0(\C)_{tors}\}
=
\bigsqcup_{(I,J) \in E_0(\C)_{tors}} X_{I,J}(\C).
\end{align*}

Define a $\G_m$-action on $S$ via the formula $\lambda \cdot (a,b,c) = (\lambda^2a,\lambda^3 b, \lambda^4 c)$.
The polynomials $I(f), J(f)$ and $\disc(f)$ are then homogenous of degree $4,6,12$.
Using the $\G_m$-action on $S$, we can form the (coarse) quotient $\mathcal{M}_{Pic} = S/\G_m$, an open subscheme of weighted projective space $\mathbb{P}(2,3,4)$.
\begin{lemma}
The variety $\mathcal{M}_{Pic}$ is the coarse moduli space of Picard curves over $\C$.
\end{lemma}
\begin{proof}
See \cite[p 15, Proposition 2.3]{Holzapfel-theballandsomehilbert}.
\end{proof}
The inclusion $S_1\subset S$ determines an isomorphism $S_1/\mu_{12}\simeq \mathcal{M}_{Pic}$, where $\mu_{12}\subset \G_m$ acts via restriction on $S_1$.
Write $\pi\colon S\rightarrow \mathcal{M}_{Pic}$ and $\pi_1\colon S_1\rightarrow \mathcal{M}_{Pic}$ for the quotient maps.

Let $V\subset \mathcal{M}_{Pic}$ be the image of the torsion locus in $S$ of the section $P$ of $\mathcal{E}\rightarrow S$.
Since this torsion locus in $\G_m$-invariant, $V$ also equals the image of the torsion locus of $P_f \in E_0$ under $\pi_1$.
Therefore $V$ equals $\cup \pi_1(X_{I,J})$, where $(I,J)$ ranges over torsion points of $E_{0}^{\circ}$. 
If $\zeta\in \mu_{12}$, then $\zeta\cdot X_{I,J} = X_{\zeta^4 I , \zeta^6 J}$, so $\mu_2\subset \mu_{12}$ preserves $X_{I,J}$.
Under the bijection $X_{I,J}\simeq E_{I,J}^{\circ}$, $-1$ acts as $(\alpha, \beta)\mapsto (\alpha, -\beta)$ on $E_{I,J}^{\circ}$.
Therefore the quotient $X_{I,J}/\mu_2$ is isomorphic, via the $a$-coordinate, to $\A^1$.
It follows that for each torsion point $(I,J)$, the restriction $X_{I,J}\hookrightarrow S_1 \rightarrow \mathcal{M}_{Pic}$ factors through a map $\varphi_{(I,J)}\colon X_{I,J}/\mu_2 =\A^1 \rightarrow \mathcal{M}_{Pic}$, and $V$ is the union of the images of $\varphi_{(I,J)}$. 
If $(I,J) = (\zeta^4 I',\zeta^6 J')$ for some $\zeta \in \mu_{12}$, then the images of $\varphi_{(I,J)}$ are equal.
Otherwise, they are disjoint.
We conclude:

\begin{proposition}\label{proposition: vanishing locus union of rational curves}
We have
\begin{align*}
V = \bigcup_{(I,J) \in E_0(\mathbb{C})_{tors}} \text{Image}(\varphi_{(I,J)}\colon \A^1 \rightarrow \mathcal{M}_{Pic}).
\end{align*}
The images of two maps $\varphi_{(I,J)}$ are either equal or disjoint.
The maps $\varphi_{(I,J)}$ are quasi-finite.
\end{proposition}

We can identify $\varphi_{(I,J)}(t)$ explicitly, at least when $b\neq 0$. 
For $t\in \C$, let $g_{(I,J)}(t)= t^3 - It/3 -J/27$, let $(\alpha_t, \beta_t) = (t g_{(I,J)}(t), g_{(I,J)}(t)^2)$ and let $f_{(I,J),t}(x) = x^4 - 3\alpha_t x^2/2 + \beta_t x + (I/12 - 3\alpha_t^2/16)$.

\begin{proposition}\label{proposition: explicit description rational curves vanishing locus}
In the above notation, assume $g_{(I,J)}(t)\neq 0$.
Then $\varphi_{(I,J)}(t) = \pi(f_{(I,J),t})$.
\end{proposition}
\begin{proof}
Let $\tilde{f}$ be an element of $X_{I,J}$ which under the isomorphism $X_{I,J}\simeq E_{I,J}^{\circ}$ corresponds to an element $(\tilde{\alpha}, \tilde{\beta})$ with $\tilde{\alpha} = t$. 
By definition, $\varphi_{(I,J)}(t) = \pi_1(\tilde{f})$. 
We have $\tilde{\beta}^2 = g_{(I,J)}(t)\neq 0$. 
A calculation shows that $f_{(I,J),t} = \tilde{\beta}\cdot \tilde{f}$, so $f_{(I,J), t}$ and $\tilde{f}$ have the same image in $\mathcal{M}_{Pic}$, as claimed.
\end{proof}

Proposition \ref{proposition: explicit description rational curves vanishing locus} gives explicit one-parameter families of Picard curves with equation $C_t\colon y^3 = f_{(I,J),t}(x)$ whose Ceresa cycle is torsion. The explicit description shows that when $(I,J)\in k^2$, where $k$ is a subfield of $\mathbb{C}$, then the family $C_t$ is defined over $k$.
Since all torsion points of $E_0$ are defined over $\Qbar$, so are the families $C_t$.

We can now prove Theorem \ref{thm: corollaries}.

\begin{proof}[Proof of Theorem \ref{thm: corollaries}]
\begin{enumerate}[$(i)$]
\item This follows from Theorems \ref{thm: picard vanishing chow group} and Proposition \ref{proposition: vanishing locus union of rational curves}.
\item For every $d \geq 1$, there exists $N = N(d) \geq 1$ such that for every Picard curve $C_f$ over a number field $k$ of degree $d$, the order of $\kappa_f$ in $\CH_1(J_{f,\bar{k}})$ is either infinite or less than $N$.  
Indeed, this follows from Theorem \ref{thm: picard vanishing chow group} and the uniform bound (depending only on $d$) on the order of a $k$-rational torsion point on an elliptic curve $y^2 = x^3 + D$ over any number field $k$ of degree $d$.  
\end{enumerate}
\end{proof}

\section{Ceresa vanishing loci in genus $3$}\label{sec: automorphism strata and Ceresa vanishing genus 3}

Fix $g\geq 3$ and let $\mathcal{M}_g$ be the (coarse) moduli space of genus-$g$ curves, seen as a variety over $\Q$.
Let $V_g^{\mathrm{rat}}\subset \mathcal{M}_g$ be the subset of curves $[C]$ for which $\kappa(C)$ vanishes in $\CH_1(\Jac(C))_{\Q}$, in the notation of \S\ref{subsec: ceresa cycles}.
Since the vanishing of $\kappa(C)$ only depends on the geometric isomorphism class of $C$ (by Lemma \ref{lemma: base extension chow groups}), this locus is well defined.
Similarly define the locus $V_g^{\mathrm{alg}} \subset \mathcal{M}_g$ where $\bar{\kappa}(C) \in \Gr_1(\Jac(C))_{\Q}$ vanishes.
\begin{lemma}\label{lemma: Zg,barZg countable union}
    The subsets $V_g^{\mathrm{rat}}, V_g^{\mathrm{alg}}\subset \mathcal{M}_g$ are countable unions of proper closed algebraic subvarieties.
\end{lemma}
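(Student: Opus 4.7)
The plan is to realize the Ceresa cycle as a single algebraic cycle over a finite cover of $\mathcal{M}_g$ and then invoke Lemma \ref{lemma: locus of rational triviality countable union}.

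First, I would fix $n \geq 3$ and pass to the fine moduli space $\mathcal{M}_g^{[n]}$ of genus-$g$ curves with full level-$n$ structure, a smooth variety carrying a universal curve $\mathcal{C}_0 \to \mathcal{M}_g^{[n]}$ with relative Jacobian $\mathcal{J}_0$. The relative canonical bundle defines a section of $\mathrm{Pic}^{2g-2}_{\mathcal{C}_0/\mathcal{M}_g^{[n]}}$, and its preimage under $L \mapsto L^{\otimes(2g-2)}$ from $\mathrm{Pic}^1_{\mathcal{C}_0/\mathcal{M}_g^{[n]}}$ is a $\mathcal{J}_0[2g-2]$-torsor; pulling back along it produces a smooth finite étale cover $\widetilde{\mathcal{M}}_g \to \mathcal{M}_g^{[n]}$ on which the universal family $\mathcal{C} \to \widetilde{\mathcal{M}}_g$ comes equipped with a relative degree-$1$ line bundle $\mathcal{L}$ satisfying $\mathcal{L}^{\otimes(2g-2)} \simeq \omega_{\mathcal{C}/\widetilde{\mathcal{M}}_g}$. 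The composition $\varphi \colon \widetilde{\mathcal{M}}_g \to \mathcal{M}_g$ is finite and surjective; since $V_g^{\mathrm{rat}}$ and $V_g^{\mathrm{alg}}$ are Galois-invariant, working after base change to a sufficiently large cyclotomic field is harmless.

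Next, $\mathcal{L}$ induces an embedding $\iota \colon \mathcal{C} \hookrightarrow \mathcal{J}$ into the relative Jacobian $\mathcal{J} \to \widetilde{\mathcal{M}}_g$, and I would form the universal Ceresa cycle
\[\mathcal{K} := [\iota(\mathcal{C})] - (-1)^*[\iota(\mathcal{C})] \in \CH^{g-1}(\mathcal{J}).\]
For each geometric point $s \in \widetilde{\mathcal{M}}_g$, the Gysin fiber $\mathcal{K}_s$ equals $\kappa_{\mathcal{C}_s, \mathcal{L}_s}$ with $(2g-2)\mathcal{L}_s = K_{\mathcal{C}_s}$; by the discussion in \S\ref{subsec: ceresa cycles}, its image in $\CH^{g-1}(\mathcal{J}_s)_{\Q}$ equals $\kappa(\mathcal{C}_s)$ and its image in $\Gr_1(\mathcal{J}_s)_{\Q}$ equals $\bar{\kappa}(\mathcal{C}_s)$. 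Applying Lemma \ref{lemma: locus of rational triviality countable union} to $\mathcal{J} \to \widetilde{\mathcal{M}}_g$ and the cycle $\mathcal{K}$, the loci
\[\widetilde{Z}^{\mathrm{rat}} := \{s : \mathcal{K}_s = 0\ \text{in}\ \CH^{g-1}(\mathcal{J}_s)_{\Q}\}, \qquad \widetilde{Z}^{\mathrm{alg}} := \{s : \mathcal{K}_s \in \CH^{g-1}(\mathcal{J}_s)_{\mathrm{alg},\Q}\}\]
are each countable unions of closed algebraic subvarieties of $\widetilde{\mathcal{M}}_g$, and by construction they equal $\varphi^{-1}(V_g^{\mathrm{rat}})$ and $\varphi^{-1}(V_g^{\mathrm{alg}})$ respectively.

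Finally, since $\varphi$ is finite and surjective, the images $\varphi(\widetilde{Z}^{\mathrm{rat}}) = V_g^{\mathrm{rat}}$ and $\varphi(\widetilde{Z}^{\mathrm{alg}}) = V_g^{\mathrm{alg}}$ are countable unions of closed algebraic subvarieties of $\mathcal{M}_g$. Each constituent is proper: by Ceresa's theorem \cite{Ceresa} one has $V_g^{\mathrm{alg}} \subsetneq \mathcal{M}_g$, so every component of $V_g^{\mathrm{rat}} \subseteq V_g^{\mathrm{alg}}$ is strictly contained in $\mathcal{M}_g$. The main obstacle in this plan is the first step — assembling the cover $\widetilde{\mathcal{M}}_g$ so that a single smooth variety simultaneously carries a universal curve and a universal $(2g-2)$-th root of the relative canonical bundle — after which Lemma \ref{lemma: locus of rational triviality countable union} does the real work.
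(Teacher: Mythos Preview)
Your argument is correct and follows essentially the same route as the paper: pass to a finite cover of $\mathcal{M}_g$ carrying a universal curve together with a $(2g-2)$-th root of the canonical class, form the universal Ceresa cycle on the relative Jacobian, invoke Lemma~\ref{lemma: locus of rational triviality countable union}, and push forward along the finite (proper) forgetful map. Your version is slightly more explicit in two places---you spell out the torsor construction of $\widetilde{\mathcal{M}}_g$ and you address properness of the components via Ceresa's theorem---but the underlying idea is identical.
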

\begin{proof}
    Let $\widetilde{\mathcal{M}}_g$ be the fine moduli space parametrizing genus-$g$ curves $C$ with full symplectic level-$5$ structure and a degree-$1$ divisor class $e\in \CH_0(C)$ such that $(2g-2)e$ is canonical.
    Considering the universal curve over it together with its degree-$1$ divisor, we can define a universal Ceresa cycle on the universal Jacobian over $\widetilde{\mathcal{M}}_g$; Lemma \ref{lemma: locus of rational triviality countable union} then implies that the locus in $\widetilde{\mathcal{M}}_g$ where this Ceresa cycle vanishes (with $\Q$-coefficients) is a countable union of closed algebraic subvarieties.
    Since the forgetful map $\widetilde{\mathcal{M}}_g\rightarrow \mathcal{M}_g$ is proper, the same is true for the image of this locus, which is exactly $V_g^{\mathrm{rat}}$. 
    The proof for $V_g^{\mathrm{alg}}$ is identical.
 \end{proof}

These vanishing loci have the following basic properties: $V_g^{\mathrm{rat}}\subset V_g^{\mathrm{alg}}\subset \mathcal{M}_g$; the hyperelliptic locus is contained in $V_g^{\mathrm{rat}}$; and $V_g^{\mathrm{alg}} \neq \mathcal{M}_g$ by Ceresa's famous result \cite{Ceresa}.
It would be interesting to obtain further information about the components of $V_g^{\mathrm{rat}}$ and $V_g^{\mathrm{alg}}$. 
We end our paper by determining the automorphism group strata in $\mathcal{M}_3$ that are contained in $V_3^{\mathrm{rat}}$ or $V_3^{\mathrm{alg}}$. 
So let $g=3$ and consider the open subscheme $\mathcal{M}_3^{\mathrm{nh}}\subset \mathcal{M}_3$ of non-hyperelliptic curves.
There is a stratification $\mathcal{M}_3^{\mathrm{nh}} = \sqcup X_G$ into locally closed subvarieties such that a non-hyperelliptic curve $C$ over $\C$ belongs to $X_G$ if and only if $\Aut(C) \simeq G$.
It turns out that $X_G$ is irreducible and the closure of $X_G$ is a union of other strata. 
We refer to \cite[\S2.2]{lombardolorenzogarciaritzenthaler} and references therein for a complete description of the loci $X_G$ and the closure relations between them. 
We reproduce here a diagram capturing these closure relations:

\begin{center}
\begin{tikzpicture}
    \node (c1) at (0,3) {$\{\operatorname{Id}\}$};
    \node (c2) at (0,2) {$C_2$};
    \node (c2c2) at (0,1) {$C_2^2$};
    \node (c3) at (-3,0) {$C_3$};
    \node (d4) at (0,0) {$D_4$};
    \node (s3) at (2,0) {$S_3$};
    \node (c6) at (-3,-1) {$C_6$};
    \node (g16) at (-1,-1) {$G_{16}$};
    \node (s4) at (1,-1) {$S_4$};
    \node (c9) at (-5,-2) {$C_9$};
    \node (g48) at (-3,-2) {$G_{48}$};
    \node (g96) at (0,-2) {$G_{96}$};
    \node (g168) at (2,-2) {$\GL_3(\F_2)$};
    \node (dim6) at (5,3) {$\dim = 6$};
    \node (dim4) at (5,2) {$\dim = 4$};
    \node (dim3) at (5,1) {$\dim = 3$};
    \node (dim2) at (5,0) {$\dim = 2$};
    \node (dim1) at (5,-1) {$\dim = 1$};
    \node (dim0) at (5,-2) {$\dim = 0$};
    \draw (c1) -- (c2)--(c2c2)--(d4)--(g16)--(g96);
    \draw (c1)--(c3)--(c9);
    \draw (c3)--(c6)--(g48);
    \draw (g16)--(g48);
    \draw (c2)--(c6);
    \draw (c2)--(s3)--(s4)--(g168);
    \draw (d4)--(s4)--(g96);\end{tikzpicture}
\end{center}
For $n\in \{16,48, 96\}$, the symbol $G_n$ the group of order $n$ and GAP label $(16,13), (48, 33),$ and $(96,64)$ respectively \cite{GAP4}.
See \cite[Table 2]{lombardolorenzogarciaritzenthaler} for models for a generic plane quartic in $X_G$.
We make explicit the strata that are relevant for us: $\overline{X_{C_3}}$ is the locus of Picard curves studied in \S\ref{sec: Picard}; $\overline{X_{C_6}}$ is the locus of \emph{bielliptic} Picard curves studied in \cite{LagaShnidman-BiellipticPicardCurves}; and the zero-dimensional strata each consist of a single automorphism-maximal curve with equation
\begin{align*}
\begin{cases}
    y^3z = x^4+xz^3 & \text{ if }G = C_9, \\
    y^3z = x^4+z^4 & \text{ if }G = G_{48,}\\
    x^4+y^4+z^4=0 & \text{ if }G = G_{96},\\
    x^3y+y^3z+z^3x & \text{ if }G = \GL_3(\F_2).
\end{cases}
\end{align*}
If $C$ is a non-hyperelliptic genus $3$ curve over a field $k$, we say $C$ is a generic curve for $X_G$ if the classifying map $\Spec(k) \rightarrow \mathcal{M}_3$ maps to the generic point of $X_G$.
If $X$ is an integral variety over $\C$, we say a property hold for a very general $x\in X(\C)$ if it holds true outside a countable union of proper closed subvarieties of $X$.

\begin{lemma}\label{lemma: equivalent conditions strata identically vanishing}
    For a group $G$ in the above diagram, the following are equivalent:
    \begin{enumerate}
        \item $\kappa(C)\neq 0$ for some generic curve $C$ for $X_G$.
        \item $\kappa(C)\neq 0$ for a very general $C$ in $X_G(\C)$.
        \item $\kappa(C)\neq 0$ for some $C$ in $X_G$.
        \item $X_G\not \subset V_3^{\mathrm{rat}}$.
        \item $\overline{X_G}\not\subset V_3^{\mathrm{rat}}$.
    \end{enumerate}
    Moreover, the analogous equivalences hold for $\bar{\kappa}(C)\in \Gr_1(J)_\Q$ and $V_3^{\mathrm{alg}}$.
\end{lemma}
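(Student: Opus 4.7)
I will establish the equivalences via the scheme $(3) \Leftrightarrow (4)$, $(4) \Leftrightarrow (5)$, $(1) \Leftrightarrow (4)$, $(4) \Rightarrow (2)$, and $(2) \Rightarrow (3)$ (the last being trivial). Throughout, I invoke Lemma~\ref{lemma: Zg,barZg countable union} to write $V_3^{\mathrm{rat}}(\C) = \bigcup_{i \in \N} Z_i(\C)$ for a countable collection of proper closed subvarieties $Z_i \subset \mathcal{M}_{3,\C}$, and I use the irreducibility of $X_G$ asserted in the paper. The equivalence $(3) \Leftrightarrow (4)$ is immediate from the definition of $V_3^{\mathrm{rat}}$ as the subset of $\mathcal{M}_3$ where $\kappa(C)$ vanishes.

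The main tool is a Baire category observation in the Zariski topology: since $X_{G,\C}$ is irreducible and $\C$ is uncountable, $X_{G,\C}$ cannot be covered by countably many proper Zariski-closed subsets. Applying this to $X_{G,\C} \cap V_3^{\mathrm{rat}}(\C) = \bigcup_i (Z_i \cap X_{G,\C})(\C)$, I conclude that $X_G \subset V_3^{\mathrm{rat}}$ if and only if $X_G \subset Z_i$ for some $i$, if and only if the generic point $\eta_G$ lies in some $Z_i$, if and only if $\overline{X_G} \subset Z_i \subset V_3^{\mathrm{rat}}$. This establishes both $(4) \Leftrightarrow (5)$ and the useful characterization that $(4)$ holds precisely when $\eta_G \notin Z_i$ for every $i$.

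For $(1) \Leftrightarrow (4)$: a generic curve $C$ for $X_G$ over a field $k$ has classifying map $\Spec(k) \rightarrow \mathcal{M}_3$ with image $\eta_G$, so the induced $\bar{k}$-point $[C_{\bar{k}}]$ lies in some $Z_i(\bar{k})$ if and only if $\eta_G \in Z_i$ (since each $Z_i$ is closed). Thus $\kappa(C) = 0$ if and only if some $Z_i$ contains $\eta_G$, which by the previous paragraph is equivalent to the negation of $(4)$. To produce a generic curve satisfying $(1)$ when $(4)$ holds, take $C$ to be any curve over $K := \overline{k(\eta_G)}$ representing $\eta_G$, which exists because $\mathcal{M}_3$ is a coarse moduli space and every geometric point of such a space is represented by an actual curve. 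For $(4) \Rightarrow (2)$: $(4)$ implies each $Z_i \cap X_{G,\C}$ is a proper closed subvariety of $X_{G,\C}$, so $V_3^{\mathrm{rat}}(\C) \cap X_G(\C)$ is a countable union of proper closed subvarieties of $X_{G,\C}$, whose complement contains very general curves. The proofs of the analogous equivalences for $\bar{\kappa}(C)$ and $V_3^{\mathrm{alg}}$ are identical, since Lemma~\ref{lemma: Zg,barZg countable union} covers both loci symmetrically.

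The main obstacle is essentially bookkeeping rather than technical difficulty: one must carefully distinguish between ``$\eta_G \in V_3^{\mathrm{rat}}$'' (interpreted as containment in some $Z_i$) and ``$X_G \subset V_3^{\mathrm{rat}}$'' as a statement about geometric points, and apply the Baire argument consistently in both directions. One must also invoke irreducibility of $X_G$ (asserted in the paper) and the fact that coarse moduli spaces over algebraically closed fields have every geometric point represented by an actual curve. No serious mathematical obstruction arises; the proof is a formal manipulation of the definitions together with Lemma~\ref{lemma: Zg,barZg countable union}.
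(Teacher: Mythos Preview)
Your proof is correct and is precisely a careful unpacking of the paper's one-line argument (``Follows immediately from Lemma~\ref{lemma: Zg,barZg countable union}''). Both proofs rest entirely on writing $V_3^{\mathrm{rat}}$ as a countable union of closed subvarieties and applying the Baire-type fact that an irreducible variety over $\C$ is not a countable union of proper closed subsets; you have simply made explicit the implications that the paper leaves to the reader.
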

\begin{proof}
    Follows immediately from Lemma \ref{lemma: Zg,barZg countable union}.
\end{proof}

We end with the proof of Theorem \ref{theorem: main theorem E}, which we restate for convenience:

\begin{proposition}
    Let $G$ be a group in the diagram.
    Then 
    \begin{enumerate}
        \item $X_G\subset V_3^{\mathrm{alg}}$ if and only if $G=C_3,C_6,C_9, G_{48}$.
        \item $X_G \subset V_3^{\mathrm{rat}}$ if and only if $G = C_9, G_{48}$;
    \end{enumerate}
\end{proposition}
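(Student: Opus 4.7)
The plan is to prove each direction of the two biconditionals using a combination of Theorem \ref{thm: Picard main} for Picard strata, explicit computations for the two distinguished Picard curves, and known non-vanishing results for specific extremal plane quartics. The ``if'' directions follow by applying the main theorems of the paper, while the ``only if'' directions split into an easy Picard case and a more delicate non-Picard case.

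For the ``if'' direction of (1), I would identify the four strata $X_{C_3}, X_{C_6}, X_{C_9}, X_{G_{48}}$ as exactly those contained in the Picard locus $\overline{X_{C_3}}$, using the explicit equations tabulated in \cite{lombardolorenzogarciaritzenthaler}. Corollary \ref{corollary: ceresa vanishes griffiths group picard curve} then gives $\bar{\kappa}(C) = 0$ for every Picard curve. For the ``if'' direction of (2), the strata $X_{C_9}$ and $X_{G_{48}}$ are single closed points represented by $y^3 = x^4 - x$ and $y^3 = x^4 + 1$; plugging $(a,b,c) = (0,-1,0)$ and $(0,0,1)$ respectively into the formulas of \S\ref{subsec: generalities} yields $P_f = (0,-27)$ on $E_f \colon y^2 = 4x^3 + 729$ (a $3$-torsion point, verified by the standard fact that $(0,\sqrt{D})$ is $3$-torsion on $y^2 = x^3 + D$) and $P_f = (12,0)$ on $E_f \colon y^2 = 4x^3 - 6912$ (a $2$-torsion point). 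Theorem \ref{thm: Picard main} then gives $\kappa(C) = 0$ in both cases.

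For the ``only if'' direction of (2) on the remaining Picard strata $G \in \{C_3, C_6\}$, Proposition \ref{proposition: group of sections of calE} shows that $P$ is a non-torsion section of $\mathcal{E} \to S$; its restriction to the $C_6$-locus $\{b = 0\}$ and to the open $C_3$-locus $\{b \neq 0\}$ both remain non-torsion (by direct evaluation at a specific non-torsion fiber), so a generic curve in $X_{C_3}$ or $X_{C_6}$ has non-torsion $P_f$, and Theorem \ref{thm: Picard main} gives $\kappa(C) \neq 0$. The remaining ``only if'' cases of (2) (for $G$ in the bad list of (1)) reduce to the corresponding cases of (1) via the inclusion $V_3^{\mathrm{rat}} \subset V_3^{\mathrm{alg}}$.

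The main obstacle is the ``only if'' direction of (1): for each $G$ in the complementary list $\{\{\mathrm{Id}\}, C_2, C_2^2, D_4, S_3, G_{16}, S_4, G_{96}, \mathrm{GL}_3(\mathbb{F}_2)\}$, I must show $X_G \not\subset V_3^{\mathrm{alg}}$. The key reduction is a Baire-type observation: since $V_3^{\mathrm{alg}}$ is a countable union of proper closed algebraic subvarieties (Lemma \ref{lemma: Zg,barZg countable union}) and each $X_G$ is irreducible, an irreducibility argument forces $X_G \subset V_3^{\mathrm{alg}}$ to be equivalent to $\overline{X_G} \subset V_3^{\mathrm{alg}}$; it thus suffices to exhibit one curve in $\overline{X_G}$ with $\bar{\kappa} \neq 0$. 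Tracing the closure relations in the diagram, one checks that either $\overline{X_G} = \mathcal{M}_3^{\mathrm{nh}}$ (the case $G = \{\mathrm{Id}\}$, handled by Ceresa's theorem \cite{Ceresa}), or $\overline{X_G}$ contains the Fermat quartic $x^4 + y^4 + z^4 = 0$ or the Klein quartic $x^3 y + y^3 z + z^3 x = 0$, each of which is known to have non-torsion Ceresa cycle by existing work (\emph{e.g.}\ of Bloch-Eskandari-Murty type). The hard part will be making precise these external non-vanishing inputs and verifying the combinatorial claim that every $\overline{X_G}$ in the bad list contains at least one of these three distinguished loci; this is a finite case check on the diagram, but it is the only place where a result outside the framework of the present paper enters.
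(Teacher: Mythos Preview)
Your proposal is correct and follows essentially the same architecture as the paper's proof: use the Picard-curve results for the ``if'' directions, reduce the non-Picard ``only if'' cases of (1) to the known non-vanishing of $\bar\kappa$ for the Fermat and Klein quartics via the closure relations in the stratification diagram, and handle (2) by combining (1) with the inclusion $V_3^{\mathrm{rat}}\subset V_3^{\mathrm{alg}}$.

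The only noteworthy differences are in two places. For the ``if'' direction of (2), you verify $\kappa(C)=0$ for the $C_9$ and $G_{48}$ curves by plugging into Theorem~\ref{thm: Picard main} and checking that $P_f$ is a torsion point; the paper instead invokes Theorem~\ref{thm: main} directly (Example~\ref{example: theorem A genus 3 examples}), which is quicker but uses a different main result. For the ``only if'' direction of (2) on $C_3,C_6$, you argue via non-torsion of the section $P$ of $\mathcal{E}\to S$ restricted to the relevant loci; the paper instead cites \cite[Corollary~1.2]{LagaShnidman-CeresaBiellipticPicard} for an explicit bielliptic Picard curve with non-torsion $\kappa$. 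Both choices are valid and self-contained within the paper's framework. Your separate appeal to Ceresa's theorem for $G=\{\mathrm{Id}\}$ is redundant, since $\overline{X_{\{\mathrm{Id}\}}}$ already contains the Fermat and Klein points. For the external non-vanishing inputs you flag as ``Bloch--Eskandari--Murty type'', the paper's specific citations are \cite{BlochCrelleI} for the Fermat quartic and \cite{Kimura-modifieddiagonalfermat} for the Klein quartic.
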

\begin{proof}
    \begin{enumerate}
        \item The equivalence between $(4)$ and $(5)$ of Lemma \ref{lemma: equivalent conditions strata identically vanishing} implies that if $X_H\subset \overline{X_G}$ and $X_G\subset V_3^{\mathrm{rat}}$, then $X_H\subset V_3^{\mathrm{rat}}$; we will use this observation in the remainder of the proof.

        Our analysis of Picard curves (Theorem \ref{thm: Picard main}) shows that $X_{C_3}\subset V_3^{\mathrm{rat}}$, so $X_G\subset V_3^{\mathrm{rat}}$ for $G= C_6, C_9$ and $G_{48}$ as well.
        On the other hand, the Ceresa cycle of the Fermat quartic and Klein quartic are known to be of infinite order in the Griffiths group; see \cite[Theorem (4.1)]{BlochCrelleI} for the former and \cite[\S4]{Kimura-modifieddiagonalfermat} for the latter.
        By our observation, this means that $X_G\not\subset V_3^{\mathrm{rat}}$ for every stratum whose closure contains one of these curves. 
        Since every stratum not contained in $\overline{X_{C_3}}$ has this property, we conclude the proof.
        \item Since $V_3^{\mathrm{rat}}\subset V_3^{\mathrm{alg}}$, Part $(1)$ implies that $X_G\subset V_3^{\mathrm{rat}}$ only if $G = C_3,C_6,C_9$ or $G_{48}$.
        The criterion of Theorem \ref{thm: main} applies to the curves in $X_{C_9}$ and $X_{G_{48}}$ (see Example \ref{example: theorem A genus 3 examples}), so $V_3^{\mathrm{rat}}$ contains these strata.
        On the other hand, there exist curves in $X_{C_6}$ with nonvanishing $\kappa(C)$, by \cite[Corollary 1.2]{LagaShnidman-CeresaBiellipticPicard}.
        So $X_{C_6}\not\subset V_3^{\mathrm{rat}}$ and $X_{C_3}\not\subset V_3^{\mathrm{rat}}$.
    \end{enumerate}
\end{proof}



\bibliographystyle{abbrv}

\end{document}